\documentclass[12pt,reqno]{amsart}

\setlength{\columnseprule}{0.4pt}
\setlength{\topmargin}{0cm}
\setlength{\oddsidemargin}{.25cm}
\setlength{\evensidemargin}{.25cm}
\setlength{\textheight}{22.5cm}
\setlength{\textwidth}{15.5cm}

\usepackage{amsfonts,amsmath,amsthm}
\usepackage{amssymb,epsfig}
\usepackage{enumerate} 

\usepackage{color} 
\definecolor{vert}{rgb}{0,0.6,0}

\usepackage[text={425pt,650pt},centering]{geometry}
\usepackage{graphicx}
\usepackage{epsfig}
\usepackage{tikz,esvect}
\usepackage{tikz-3dplot}
\usetikzlibrary{3d,calc,shapes}
\usetikzlibrary{shapes.geometric}
\usepackage{adjustbox}
\usepackage{subfig}

\usepackage{caption}
\usepackage{color} 
\definecolor{vert}{rgb}{0,0.6,0}

\numberwithin{figure}{section}

\theoremstyle{plain}
\newtheorem{thm}{Theorem}[section]

\newtheorem{defn}{Definition}

\newtheorem{lem}[thm]{Lemma}
\newtheorem{cor}[thm]{Corollary}
\newtheorem{prop}[thm]{Proposition}
\theoremstyle{remark}
\newtheorem{rem}{\bf{Remark}}
\numberwithin{equation}{section}



\newcommand{\N}{\mathbb{N}}

\newcommand{\R}{\mathbb{R}}

\newcommand{\T}{\mathbb{T}}
\newcommand{\Z}{\mathbb{Z}}

\newcommand{\cA}{\mathcal{A}}

\newcommand{\cH}{\mathcal{H}}


\newcommand{\Lip}{{\rm Lip\,}}






\newcommand{\gam}{\gamma}

\newcommand{\ep}{\varepsilon}

\newcommand{\lam}{\lambda}

\newcommand{\om}{\omega}
\newcommand{\mdef}{m^{\rm d}}
\newcommand{\Omext}{\mathcal{O}}

\newcommand{\Om}{\Omega}

\newcommand{\ol}{\overline}

\newcommand{\dist}{{\rm dist}\,}

\DeclareMathOperator*{\esssup}{ess\,sup}



\begin{document}

\title[Homogenization on perforated domains and applications]
{Quantitative homogenization of state-constraint Hamilton--Jacobi equations on perforated domains and applications}

\author[Y. Han, W. Jing, H. Mitake, H. V. Tran] 
{Yuxi Han, Wenjia Jing, Hiroyoshi Mitake, and Hung V. Tran}

\thanks{
The work of YH is partially supported by NSF CAREER grant DMS-1843320 and the Bung-Fung Lee Torng Fellowship.
The work of WJ is partially supported by the China Ministry of Science and Technology under grant No.\,2023YFA1008900.
The work of HM was partly supported by JSPS through the grants Kakenhi: 22K03382, 21H04431, and 24K00531.
The work of HVT is partially supported by NSF CAREER grant DMS-1843320 and a Vilas Faculty Early-Career Investigator Award.
}

\address[Y. Han]
{
Department of Mathematics, 
University of Wisconsin Madison, Van Vleck Hall, 480 Lincoln Drive, Madison, Wisconsin 53706, USA}
\email{yuxi.han@math.wisc.edu}

\address[W. Jing]
{
Yau Mathematical Sciences Center, Tsinghua University, No.1 Tsinghua Yuan, Beijing 100084, China}
\email{wjjing@tsinghua.edu.cn}

\address[H. Mitake]
{Graduate School of Mathematical Sciences, University of Tokyo 3-8-1
Komaba, Meguro-ku, Tokyo, 153-8914, Japan}
\email{mitake@ms.u-tokyo.ac.jp}

\address[H. V. Tran]
{
Department of Mathematics, 
University of Wisconsin Madison, Van Vleck Hall, 480 Lincoln Drive, Madison, Wisconsin 53706, USA}
\email{hung@math.wisc.edu}

\date{\today}
\keywords{Cell problems; domain defects; first-order convex Hamilton--Jacobi equations; periodic homogenization; perforated domains; optimal convergence rate; state-constraint problems; viscosity solutions}
\subjclass[2010]{
35B10 
35B27 
35B40 
35F21 
49L25 
}

\maketitle

\begin{abstract}
We study the periodic homogenization problem of state-constraint Hamilton--Jacobi equations on perforated domains in the convex setting and obtain the optimal convergence rate.
We then consider a dilute situation in which the holes' diameter is much smaller than the microscopic scale.
Finally, a homogenization problem with domain defects where some holes are missing is analyzed.

\end{abstract}


\section{Introduction}
This paper studies the following homogenization problem on perforated domains and its applications. Consider an open and connected set $\Omega \subset \mathbb{R}^n$ with $C^1$ boundary and assume $\Omega$ is $\mathbb{Z}^n$-periodic, which means it satisfies the condition $\Omega+ \mathbb{Z}^n = \Omega$. Note that for $\Om$ to be connected, we need $n\geq 2$. For $\varepsilon >0$, consider the domain $\Omega_\varepsilon: = \varepsilon \Omega$ and let $u^\varepsilon$ be the unique viscosity solution to the state-constraint problem
\begin{equation}\label{eqn:PDE_epsilon}
    \left\{\begin{aligned}
    u_t^\varepsilon+H\left(\frac{x}{\varepsilon}, Du^\varepsilon\right) & \leq 0 \qquad \quad \text{in } \Omega_\varepsilon \times (0, \infty),\\
    u_t^\varepsilon+H\left(\frac{x}{\varepsilon}, Du^\varepsilon\right) & \geq 0 \qquad \quad \text{on } \overline{\Omega}_\varepsilon \times (0, \infty), \\
    u^\varepsilon(x,0) & =g(x) \,\,\,\, \quad \text{on } \overline{\Omega}_\varepsilon \times \{t=0\}, 
    \end{aligned}
    \right.
\end{equation}
where $H$ is a given Hamiltonian and $g$ is a given initial condition. Under appropriate assumptions, we expect that as $\varepsilon$ tends to $0$, $u^\varepsilon$ converges locally uniformly to $u$ on $\overline{\Omega}_\varepsilon \times [0,\infty)$, where $u$ is the unique viscosity solution to
\begin{equation}\label{eqn:PDE_limit}
    \left\{\begin{aligned}
    u_t+\overline{H} \left(Du\right) & = 0 \qquad \quad \text{in } \mathbb{R}^n \times (0, \infty), \\
    u(x, 0)&=g(x) \, \, \,\quad \text{on } \mathbb{R}^n \times \left\{t=0\right\}.
    \end{aligned}
    \right.
\end{equation}
Here $\overline{H}$ is the effective Hamiltonian determined by the state-constraint cell problem on $\ol \Omega$. More specifically, for any $p \in \mathbb{R}^n$, $\overline{H}(p)$ is the unique constant such that the following state-constraint cell problem has a $\Z^n$-periodic viscosity solution:
\begin{equation}\label{eqn:cell}
    \left\{\begin{aligned}
    H\left(y, p+Dv(y)\right) &\leq \overline{H}(p) \quad \text{in }  \Omega, \\
    H\left(y, p+Dv(y)\right) &\geq \overline{H}(p) \quad \text{on }  \overline{\Omega}.
    \end{aligned}
    \right.
\end{equation}
If needed, we write $\ol H= \ol H_\Om$ to denote the dependence of $\ol H $ on $\Om$.

We are particularly interested in how fast $u^\varepsilon$ converges to $u$. We prove that the convergence rate of $u^\varepsilon$ to $u$ is $O(\varepsilon)$, which is optimal, in the convex setting.
We then study related problems in perforated domains including a dilute situation in which the holes' diameter is much smaller than the microscopic scale $\ep$ and a problem with domain defects.
\medskip

\begin{center}
    \includegraphics[scale=0.15]{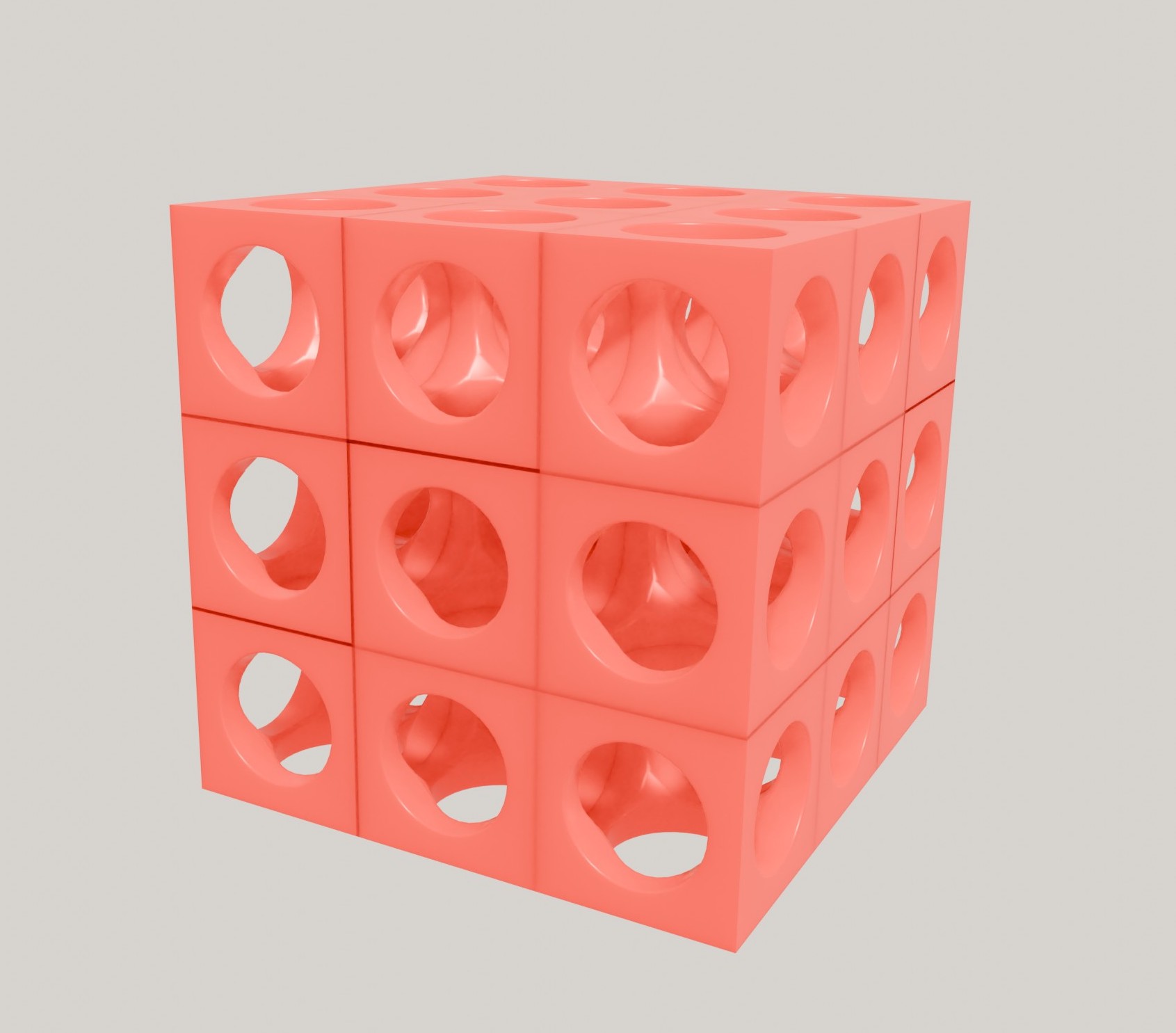}
\captionof{figure}{An example in three dimensions with $\Om, \Om^c$ connected}\label{figVan}
\end{center}

\begin{center}
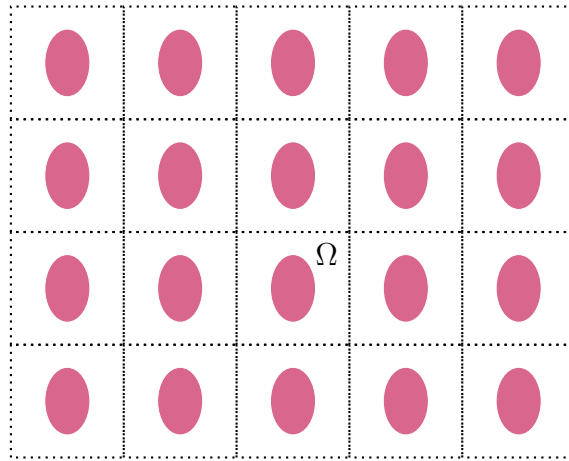

    \begin{tikzpicture}[scale=1.5]
    \foreach \x in {0.5,1.5,...,4.5}
        \foreach \y in {0.5,1.5,...,3.5}
            \filldraw[fill=purple!60,draw=white] (\x,\y) ellipse (0.2 cm and 0.3 cm);
     \foreach \x in {0.5,1.5,...,4.5}
        \foreach \y in {0.5,1.5,...,3.5}
            \draw[thick, dotted] (\x-0.5,\y-0.5) rectangle (\x+0.5,\y+0.5);
    \draw (2.8,1.8) node{$\Omega$};
\end{tikzpicture}
\captionof{figure}{An example of $\Om$ in two dimensions}\label{fig1}
\end{center}

\subsection{Relevant Literature} 
The qualitative homogenization theory of Hamilton--Jacobi equations on perforated domains was first studied in \cite{HorieIshii1998, Alvarez1999, AlvarezIshii}.
For the state-constraint cell problems, see \cite{CDL, HorieIshii1998, Mitake2008, IMT}.
See Figures \ref{figVan}--\ref{fig2} for examples of $\Om$.
Some properties of the effective Hamiltonian $\ol H$ were obtained in \cite{Con}.
However, the convergence rate of $u^\varepsilon$ to $u$ in this setting has not been studied.

Quantitative homogenization for first-order Hamilton--Jacobi equations in the
periodic setting in the whole domain has received much attention since 2000.
The convergence rate $O(\ep^{1/3})$ was obtained for first-order equations in \cite{CDI} for general coercive Hamiltonians.
The optimal rate of convergence $O(\ep)$ in the convex setting was obtained in \cite{TY2022}.
The method in \cite{TY2022} was generalized to handle problems with multiscales in \cite{HanJang} and a spatio-temporal environment in \cite{HNguyen}. 
For earlier progress with nearly optimal convergence rates in the convex setting, we refer the reader to \cite{MTY, JTY, Tu, Tran, TYWKAM} and the references therein.

\subsection{Settings}

Throughout this paper, we always assume the following conditions for the Hamiltonian $H: \R^n \times \mathbb{R}^n \to \mathbb{R}$ and the initial data $g:\R^n \to \R$.
\begin{itemize}
    \item[(A1)] $H\in C(\R^n\times \R^n)$; and for $p\in \R^n$, $y\mapsto H(y,p)$ is $\Z^n$-periodic.
    \item[(A2)] $\lim_{\left|p\right| \to \infty}\left(\inf_{y \in \R^n}H\left(y,p\right)\right) = +\infty$.
    \item[(A3)] For each $y\in \R^n$, the map $p \mapsto H(y, p)$ is convex.
    \item[(A4)] $g \in \mathrm{BUC}(\mathbb{R}^n) \cap \mathrm{Lip} (\mathbb{R}^n)$.
\end{itemize}
Let $\T^n=\R^n/\Z^n$ be the usual flat $n$ dimensional torus. 
Then, we can also write $H \in C(\T^n \times \R^n)$.
Note that (A1) and (A2) imply that there exists a constant $C_1>0$ so that
\begin{equation}\label{eq:H-lower-bd}
H(y, p) \geq -C_1 \qquad \text{ for all } (y, p) \in \R^n \times \mathbb{R}^n.
\end{equation}
Although \eqref{eqn:PDE_epsilon} and \eqref{eqn:cell} only require $H$ to be defined on $\ol{\Omega}\times \R^n$, it is more convenient for us to consider $H$ on $\R^n \times \R^n$ for later usages. 

The well-posedness of \eqref{eqn:PDE_epsilon} has been well studied in the literature (see \cite{Soner1, Soner2, CDL, Mitake2008}). Furthermore, one can show that the solution $u^\varepsilon$ is globally Lipschitz, that is, for $\ep\in (0,1)$,
\begin{equation}\label{eqn:u_prior}
\left\|u^\varepsilon_t\right\|_{L^\infty\left(\overline{\Omega} \times [0, \infty)\right)}+\left\|Du^\varepsilon\right\|_{L^\infty(\overline{\Omega} \times [0, \infty))} \leq C_0,
\end{equation}
where $C_0>0$ is a constant that depends only on $H$ and $\left\|Dg\right\|_{L^\infty(\mathbb{R}^n)}$.
Indeed, in light of \eqref{eq:H-lower-bd}, $g+C_1t$ is a supersolution to \eqref{eqn:PDE_epsilon}.
Besides, (A1) and (A4) yield that, for $C=C(H,\|Dg\|_{L^\infty(\R^n)})>0$ sufficiently large, $g-Ct$ is a subsolution to \eqref{eqn:PDE_epsilon}.
By the usual comparison principle,
\[
g(x)-Ct \leq u^\ep(x,t) \leq g(x)+C_1t \qquad \text{ for all } (x, t) \in \overline{\Omega} \times [0,\infty).
\]
Hence, $\|u^\ep_t(\cdot,0)\|_{L^\infty(\ol \Om)} \leq C$.
By using the comparison principle once more, we get $\|u^\ep_t\|_{L^\infty(\ol \Om\times[0,\infty))} \leq C$.
Finally, we use this bound, \eqref{eqn:PDE_epsilon}, and (A2) to obtain \eqref{eqn:u_prior}.

\smallskip

Based on \eqref{eqn:u_prior}, we can modify $H(y, p)$ for $|p| > 2C_0+1$ without changing the solutions to \eqref{eqn:PDE_epsilon} and ensure that, for all $ (y,p) \in \R^n\times \mathbb{R}^n$,
\begin{equation}\label{eqn:K_0H}
    \frac{|p|^2}{2}-K_0 \leq H(y, p) \leq \frac{|p|^2}{2}+K_0
\end{equation}
for some constant $K_0 >0$ that depends only on $H$ and $\left\|Dg\right\|_{L^\infty(\mathbb{R}^n)}$. Consequently, for all $(y,v)  \in \R^n \times \mathbb{R}^n$,
\begin{equation}\label{eqn:K_0L}
\frac{|v|^2}{2}-K_0 \leq L(y, v) \leq \frac{|v|^2}{2}+K_0,
\end{equation}
where $L: \R^n \times \mathbb{R}^n \to \mathbb{R
}$ is the Legendre transform of $H$.

Moreover, we have the optimal control formulas for $u^\varepsilon$, that is,
\begin{equation}\label{eqn:ocfue}
    \begin{aligned}
        &u^\varepsilon (x, t)\\
        =\ & \inf \left\lbrace \int_0^t L\left( \frac{\xi(s)}{\varepsilon} ,\dot{\xi}(s)\right) \,ds+g\left(\xi(0)\right): \xi\in \mathrm{AC}([0,t];\overline{\Omega}_\varepsilon), \xi(t) = x\right\rbrace\\
        =\ &\inf \left\lbrace \varepsilon \int_0^{\frac{t}{\varepsilon}} L\left(\gamma(s),\dot{\gamma}(s)\right) \,ds+g\left(\varepsilon \gamma(0)\right): \gamma\in \mathrm{AC}\left(\left[0,\frac{t}{\varepsilon}\right];\overline{\Omega} \right), \gamma\left(\frac{t}{\varepsilon}\right) = \frac{x}{\varepsilon}\right\rbrace.
    \end{aligned}
\end{equation}
For $u$, we have the Hopf-Lax formula
\begin{equation}
\label{eq:HLubar}
    \begin{aligned}
        u(x, t) = \inf \left\lbrace t\overline{L} \left(\frac{x-y}{t}\right) +g\left(y\right): y\in \R^n\right\rbrace.
    \end{aligned}
\end{equation}
 Here, $\mathrm{AC}(J,U)$ denotes the set of absolutely continuous curves $\xi:J \to U$ and $\overline{L}$ is the Legendre transform of $\overline{H}: \mathbb{R}^n \to \mathbb{R
}$. Note that the admissible paths in the optimal control formula for $u^\varepsilon$ are restricted on $\overline{\Omega}_\varepsilon$ or $\ol \Om$ after rescaling. 
Also, for all $p,v \in \R^n$,
\begin{equation}\label{eq:Hbar-Lbar}
    \frac{|p|^2}{2}-K_0 \leq \ol H(p) \leq \frac{|p|^2}{2}+K_0,\qquad
    \frac{|v|^2}{2}-K_0 \leq \ol L(v) \leq \frac{|v|^2}{2}+K_0.
\end{equation}
We always assume \eqref{eqn:K_0H}, \eqref{eqn:K_0L}, and \eqref{eq:Hbar-Lbar} in our analysis from now on.

\subsection{Main results}
Our first main result is concerned with the convergence rate of $u^\ep$, the viscosity solution to \eqref{eqn:PDE_epsilon}, to $u$, the viscosity solution to \eqref{eqn:PDE_limit}.
\begin{thm}\label{thm:main1}
    Assume {\rm (A1)--(A4)}. For $\varepsilon>0$, let $u^\varepsilon$ be the viscosity solution to \eqref{eqn:PDE_epsilon} and $u$ be the viscosity solution to \eqref{eqn:PDE_limit}, respectively. Then, there exists a constant $C=C\left(n, \partial\Omega, H, \left\|Dg\right\|_{L^\infty\left(\mathbb{R}^n\right)}\right)>0$ such that, for $\ep \in (0,1)$,
\begin{equation}
    \left\|u^\varepsilon- u\right\|_{L^\infty\left(\overline{\Omega}_\varepsilon \times [0, \infty)\right)} \leq C \varepsilon.
\end{equation}
\end{thm}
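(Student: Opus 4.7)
The plan is to establish the two one-sided bounds $|u^\ep(x,t)-u(x,t)|\leq C\ep$ on $\ol\Om_\ep\times[0,\infty)$ separately, using the optimal control formula \eqref{eqn:ocfue} for $u^\ep$ and the Hopf--Lax formula for $u$. Thanks to the a priori bound \eqref{eqn:u_prior} together with the quadratic controls \eqref{eqn:K_0H}--\eqref{eq:Hbar-Lbar}, all relevant momenta $p$ and velocities $v$ remain in a fixed compact set, so constants depending on $|p|$ or $|v|$ are harmless.

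For the lower bound $u^\ep(x,t)\geq u(x,t)-C\ep$, fix $(x,t)$, choose an $\ep$-minimizer $\xi\in\mathrm{AC}([0,t];\ol\Om_\ep)$ in \eqref{eqn:ocfue}, and set $y:=\xi(0)$. For each relevant $p\in\R^n$, the state-constraint cell problem \eqref{eqn:cell} supplies a $\Z^n$-periodic Lipschitz viscosity subsolution $w=w(\cdot,p)$ of $H(y,p+Dw)\leq\ol H(p)$ in $\Om$, whose Lipschitz constant and oscillation are controlled through \eqref{eqn:K_0H}. Legendre duality applied along $\xi$ gives
\begin{equation*}
L(\xi/\ep,\dot\xi)\geq \dot\xi\cdot(p+Dw(\xi/\ep,p))-\ol H(p)\qquad \text{a.e. }s\in[0,t],
\end{equation*}
and integration together with the chain rule for $s\mapsto \ep\, w(\xi(s)/\ep,p)$ yields
\begin{equation*}
\int_0^t L(\xi/\ep,\dot\xi)\,ds \geq (x-y)\cdot p - t\ol H(p) - 2\ep\|w(\cdot,p)\|_{L^\infty}.
\end{equation*}
Optimizing over $p$, adding $g(y)$, and invoking the Hopf--Lax formula gives $u^\ep(x,t)\geq u(x,t)-C\ep$. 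The mild technicality is the Fenchel inequality at times when $\xi(s)/\ep\in\partial\Om$; it is handled by the Lipschitz regularity of $w$ together with a standard state-constraint inward perturbation of $\xi$.

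For the upper bound $u^\ep(x,t)\leq u(x,t)+C\ep$, pick a Hopf--Lax minimizer $y^*\in\R^n$ so that $u(x,t)=t\ol L(v^*)+g(y^*)$ with $v^*:=(x-y^*)/t$. After rescaling by $\ep$, the task reduces to a \emph{connector lemma} on $\ol\Om$: for bounded $v\in\R^n$, any $T\geq 1$, and any $z_1,z_2\in\ol\Om$ with $|z_2-z_1-Tv|$ bounded, there exists $\gamma\in\mathrm{AC}([0,T];\ol\Om)$ joining $z_1$ to $z_2$ with
\begin{equation*}
\int_0^T L(\gamma,\dot\gamma)\,ds \leq T\ol L(v)+C,
\end{equation*}
where $C$ is \emph{independent} of $T$. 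Granted this, apply it with $T=t/\ep$, $z_2=x/\ep\in\ol\Om$, and $z_1\in\ol\Om$ within $O(1)$ of $y^*/\ep$ (available since $\Z^n$-periodicity of $\ol\Om$ guarantees every unit cube meets $\ol\Om$); then set $\xi(s):=\ep\,\gamma(s/\ep)$ and use the Lipschitz continuity of $g$ to absorb the $O(\ep)$ shift in $\xi(0)$.

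The main obstacle is the connector lemma, whose error must stay bounded as $T\to\infty$. A natural plan is to build $\gamma$ from $O(T)$-many periodic, almost-closed orbits in $\ol\Om$ that realize $\ol L(v)$ (for $v\in\mathbb{Q}^n$ via a periodic minimizer of $L-p\cdot v$ at the dual $p=D\ol L(v)$, and for irrational $v$ by approximation and continuity of $\ol L$), concatenated with $O(1)$-length correction segments at the endpoints. Connectedness of $\ol\Om$, the $C^1$-regularity of $\partial\Om$, and a uniform interior tubular thickness are what allow the correction segments and the transitions between orbits to be routed inside $\ol\Om$ at bounded cost, yielding the $T$-uniform constant $C$.
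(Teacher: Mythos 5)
Your high-level decomposition into two one-sided bounds via the optimal control formula is reasonable, and in the whole-space case it is a standard structure. However, your route is genuinely different from the paper's, and it has two substantive gaps.

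\smallskip
\textbf{Lower bound.} You invoke a $\Z^n$-periodic Lipschitz subsolution $w(\cdot,p)$ of the state-constraint cell problem and run the Fenchel--Young plus chain rule argument along the optimal path $\xi$ for $u^\ep$. But the subsolution inequality $H(y,p+Dw)\le \ol H(p)$ holds only in the open set $\Omega$, not on $\pl\Omega$, while the optimal paths for a state-constraint problem genuinely run along $\pl\Omega_\ep$ for sets of times of positive measure (the paper explicitly remarks that ``$\gamma$ might touch and run on $\pl\Omega$ for some time''). Dismissing this as a ``mild technicality'' handled by an ``inward perturbation of $\xi$'' does not work without substantial justification: perturbing $\xi$ into $\Omega_\ep$ may not keep it admissible (the holes are immediately adjacent), and even if one can, quantifying the resulting cost change at the $O(\ep)$ level is exactly where the difficulty lies. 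There is also the usual chain-rule-for-Lipschitz-functions issue (the non-differentiability set of $w$ has measure zero in space, but the curve can spend positive measure time there); in the whole-space setting this is handled by mollification, but mollification up to $\pl\Omega$ is precisely what breaks down in a perforated domain. The paper is explicit that it does \emph{not} use the corrector approach for this reason, and this is not cosmetic.

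\smallskip
\textbf{Upper bound.} You reduce to a ``connector lemma'' with a $T$-uniform error, which is indeed the crux. But you do not prove it. Your sketch via ``periodic almost-closed orbits realizing $\ol L(v)$,'' approximation for irrational $v$, and bounded transition segments has real gaps: the existence of calibrated periodic orbits in $\ol\Omega$ with the needed properties is not established for the state-constraint problem, the uniform bound on the number and cost of transitions between consecutive orbits is not shown, and the irrational case by density needs a quantitative continuity of $\ol L$ that you do not supply. In the paper this is exactly where the real work happens: the metric $m$ is extended to $m^\ast$ on $\R^n\times\R^n$, \emph{subadditivity} (Lemma \ref{lem:subadd}) and \emph{superadditivity} (Lemma \ref{lem:superadd}) of $m^\ast$ are proved, the superadditivity uses \emph{Burago's lemma} to compress a path on $[0,2t]$ to one on $[0,t]$ after $O(1)$ periodic re-glueings, and together with Fekete one obtains the quantitative rate $|\ol m^\ast - \ep\,m^\ast(\cdot/\ep)|\le C\ep$ (Theorem \ref{thm:mbarmstarrate}), which plays the role of your connector lemma. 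Your plan is plausible as a heuristic, but it is not a proof, and the one tool that makes the $T$-uniform constant fall out — Burago's lemma applied in the perforated, state-constraint geometry — is absent.

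\smallskip
In short: the two-sided structure is fine, but the corrector-based lower bound faces a boundary obstruction that the paper deliberately engineered around, and the upper bound rests on an unproved connector lemma whose proof is exactly the content of the paper's $m^\ast$-machinery (Propositions \ref{prop:mstarbound}, \ref{prop:mstar}, \ref{prp:mstarm}; Lemmas \ref{lem:subadd}, \ref{lem:superadd}; Theorem \ref{thm:mbarmstarrate}; Lemma \ref{lem:ubaru}).
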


It is clear that the convergence rate $O(\ep)$ in the above theorem is optimal.
We refer the reader to \cite[Proposition 4.3]{MTY} for an explicit example confirming this optimality.
It is worth noting that we only require that $\Om$ is open, connected, $\Z^n$-periodic with $C^1$ boundary here.
In particular, $\Om^c$ can be connected (see Figure \ref{figVan}) or can contain unbounded components such as periodic small tubes when $n\geq 3$ (see Figure \ref{fig2} for one example.)
To prove Theorem \ref{thm:main1}, we develop further the method in \cite{TY2022} to the current setting of perforated domains where the admissible paths and the optimal paths (geodesics) are restricted on $\overline{\Omega}_\varepsilon$ or $\ol \Om$ after rescaling. 
Intuitively, one can think of $\ol \Om_\ep^c$ or $\ol \Om^c$ after rescaling as obstacles that the admissible paths cannot travel into.

\medskip
\begin{center}
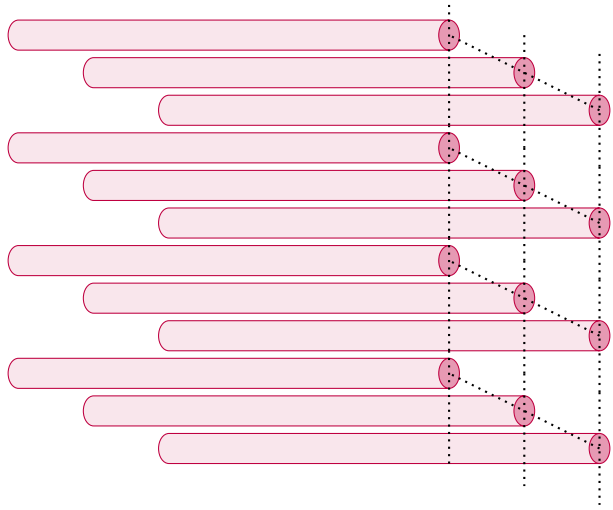

\begin{tikzpicture}[scale=1]
\begin{scope}
  \foreach \i in {0,1.5,3,4.5} {
    \node[cylinder, 
    draw = purple, 
    cylinder uses custom fill, 
    cylinder body fill = purple!10, 
    cylinder end fill = purple!40,
    minimum width = 0.4cm,
    minimum height = 6cm] (c) at (0,\i){};
    \node[cylinder, 
    draw = purple, 
    cylinder uses custom fill, 
    cylinder body fill = purple!10, 
    cylinder end fill = purple!40,
    minimum width = 0.4cm,
    minimum height = 6cm] (c) at (-1,\i+0.5){};
    \node[cylinder, 
    draw = purple, 
    cylinder uses custom fill, 
    cylinder body fill = purple!10, 
    cylinder end fill = purple!40,
    minimum width = 0.4cm,
    minimum height = 6cm] (c) at (-2,\i+1){};
    \draw[thick, dotted] (3,\i)--(1,\i+1);
    \draw[thick, dotted] (3,\i+0.75)--(3,\i-0.75);
    \draw[thick, dotted] (2,\i+1)--(2,\i-0.5);
    \draw[thick, dotted] (1,\i+1.5)--(1,\i-0.25);
    }
    \end{scope}
\end{tikzpicture}
\captionof{figure}{An example of $\Om$ in three dimensions}\label{fig2}
\end{center}

\medskip

Next, for each $\ep>0$, we consider the case that $\Omega_\ep$ consists of $\ep$-periodic copies of a hole whose diameter is $O(\eta(\ep)\ep)$ with $\lim_{\ep \to 0} \eta(\ep)=0$.
As the holes' diameter $O(\eta(\ep)\ep)$ vanishes faster than $\ep$ as $\ep \to 0$, it is natural to expect that we see the whole space in the limit.
This case is called a dilute situation in the homogenization theory, that is, the volume fraction of the obstacles vanishes in the limit. Since $\Om_\ep$ behaves like $\R^n$ as $\ep \to 0$, it is natural to make a connection with the usual homogenization problems in the whole space.
Let $\tilde u^\ep$ solve
\begin{equation}\label{eqn:whole_epsilon}
    \left\{\begin{aligned}
    \tilde u_t^\varepsilon+H\left(\frac{x}{\varepsilon}, D\tilde u^\varepsilon\right) & = 0 \qquad \quad \text{in } \R^n \times (0, \infty),\\
    \tilde u^\varepsilon(x,0) & =g(x) \,\,\,\, \quad \text{on } \R^n \times \{t=0\}.
    \end{aligned}
    \right.
\end{equation}
Let $\ol H_0=\ol H_{\R^n}$ be the effective Hamiltonian corresponding to $H$ in the whole space. For any $p\in \R^n$, $\ol H_0(p)$ is the unique constant such that the following usual cell problem has a $\Z^n$-periodic viscosity solution
\[
H(y,p+Dv_0(y))= \ol H_0(p) \qquad \text{ in } \R^n.
\]
It is important to note that $\ol H_0 \neq \ol H_\Om$ in general.
By the usual comparison principle, we have the one-sided inequality $\ol H_\Om \leq \ol H_0$.

Let $\tilde u$ solve
\begin{equation}\label{eqn:whole_limit}
    \left\{\begin{aligned}
    \tilde u_t+\overline{H}_0 \left(D\tilde u\right) & = 0 \qquad \quad \text{in } \mathbb{R}^n \times (0, \infty), \\
    \tilde u(x, 0)&=g(x) \, \, \,\quad \text{on } \mathbb{R}^n \times \left\{t=0\right\}.
    \end{aligned}
    \right.
\end{equation}

To get quantitative convergence rates, we need to put a further assumption on the Hamiltonian $H$.
\begin{itemize}
    \item[(A5)] For each $y\in \R^n$, $\min_{p\in \R^n} H(y,p)=H(y,0)=0$.
\end{itemize}

\begin{thm}\label{thm:main2}
    Assume {\rm (A1)--(A5)}. 
    Assume that there exists an open connected set $D\Subset \left(-\frac{1}{2},\frac{1}{2}\right)^n$ containing $0$ with connected $C^1$ boundary and $\eta:\left[0,\frac{1}{2}\right)\to \left[0,\frac{1}{2}\right)$ with $\lim_{\ep\to 0} \eta(\ep)=0$. Let 
    $\Omega^{\eta(\ep)} =\R^n\setminus \bigcup_{m\in \Z^n}(m+\eta(\ep) \ol D)$ and $\Omega_\ep = \ep \Omega^{\eta(\ep)}$ for $\ep \in \left(0,\frac{1}{2}\right)$.
    
    For $\varepsilon \in \left(0,\frac{1}{2}\right)$, let $u^\varepsilon$ and $\tilde u^\ep$ be the viscosity solutions to \eqref{eqn:PDE_epsilon} and \eqref{eqn:whole_epsilon}, respectively. 
    Let $\tilde u$ be the viscosity solution to \eqref{eqn:whole_limit}.
    Then, there exists a constant $C=C\left(n, \partial D, H, \left\|Dg\right\|_{L^\infty\left(\mathbb{R}^n\right)}\right)>0$ such that, for $\ep \in \left(0,\frac{1}{2}\right)$ and $(x,t)\in \ol \Om_\ep \times [0,\infty)$,
\begin{equation}\label{eqn:main2}
    \tilde u(x,t) - C \varepsilon \leq \tilde u^\ep(x,t) \leq u^\ep(x,t) \leq \tilde u^\ep(x,t) + C(\ep + \eta(\ep)t) \leq \tilde u(x,t) + C(\ep + \eta(\ep)t).
\end{equation}
In particular, for $\ep \in \left(0,\frac{1}{2}\right)$ and $(x,t)\in \ol \Om_\ep \times [0,\infty)$,
\[
|u^\ep(x,t)-\tilde u(x,t)| \leq C(\ep + \eta(\ep)t).
\]
\end{thm}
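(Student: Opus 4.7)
The plan is to prove the four inequalities of \eqref{eqn:main2} in turn. The outer two, $|\tilde u^\ep - \tilde u|\le C\ep$, are the optimal $O(\ep)$ whole-space periodic-homogenization rate, which I will quote from \cite{TY2022}; the convex assumptions (A1)--(A4) suffice there and (A5) is compatible. The middle-left inequality $\tilde u^\ep\le u^\ep$ on $\ol\Omega_\ep$ is immediate from the optimal-control representation \eqref{eqn:ocfue}, since the admissible paths for $u^\ep$ (those confined to $\ol\Omega_\ep$) form a subset of those for $\tilde u^\ep$ on $\R^n$. The heart of the theorem is the middle-right inequality $u^\ep\le\tilde u^\ep+C(\ep+\eta(\ep)t)$, which I will establish by an explicit detour construction of competitors for $u^\ep$ from near-optimal curves for $\tilde u^\ep$.

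Fix $(x,t)\in\ol\Omega_\ep\times(0,\infty)$ and $\delta>0$, and choose a $\delta$-suboptimal curve $\tilde\xi\in\mathrm{AC}([0,t];\R^n)$ with $\tilde\xi(t)=x$ for $\tilde u^\ep(x,t)$. By a standard smoothing of $H$ into a $C^2$ strictly convex Hamiltonian preserving \eqref{eqn:K_0H}, the classical Pontryagin characterization for the smoothed optimal trajectory, and a limit, I may assume $\tilde\xi$ is Lipschitz with $|\dot{\tilde\xi}|\le M$ a.e., where $M$ depends only on $H$ and $\|Dg\|_{L^\infty}$. Enumerate the maximal open sub-intervals $\{(\tau_{2j-1},\tau_{2j})\}_{j=1}^N$ of $[0,t]$ on which $\tilde\xi$ lies strictly inside one hole $\ep(m_j+\eta(\ep)D)$, and let $a_j=\tilde\xi(\tau_{2j-1})$ and $b_j=\tilde\xi(\tau_{2j})$ be the entry/exit points on $\partial(\ep(m_j+\eta(\ep)D))$. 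Since $\partial D\in C^1$ and $\overline D\Subset(-\tfrac12,\tfrac12)^n$, choose a curve $\gamma_j$ on $\partial(\ep(m_j+\eta(\ep)D))\subset\ol\Omega_\ep$ joining $a_j$ to $b_j$ of length $L_j\le C_D|a_j-b_j|\le C_DC\ep\eta(\ep)$ with $C_D=C_D(\partial D)$.

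Define $\xi'\in\mathrm{AC}([0,t];\ol\Omega_\ep)$ with $\xi'(t)=x$ to coincide with $\tilde\xi$ outside the bad intervals and on each bad interval to be one of two reparametrizations of $\gamma_j$: (A) if $\tau_{2j}-\tau_{2j-1}\ge L_j/\sqrt{2K_0}$, wait at $a_j$ (zero cost since $L(y,0)=0$ by (A5)) and then traverse $\gamma_j$ at constant speed $\sqrt{2K_0}$, giving cost $\le\sqrt{2K_0}\,L_j\le C\ep\eta(\ep)$; (B) otherwise traverse $\gamma_j$ over the full subinterval at constant speed, where the case hypothesis and $L_j\le C\ep\eta(\ep)$ force $\tau_{2j}-\tau_{2j-1}\le C\ep\eta(\ep)$, and then $L_j\le C_DM(\tau_{2j}-\tau_{2j-1})$ (from the Lipschitz control) gives cost $\le C\ep\eta(\ep)$ as well. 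If $\tilde\xi(0)$ lies in a hole, shift $\xi'(0)$ to its nearest boundary point, absorbing $\|Dg\|_{L^\infty}\cdot C\ep\eta(\ep)\le C\ep$ into the initial-data term.

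Since $L\ge 0$ by (A5), the original cost on each bad interval is non-negative, so the per-hole extra cost is at most $C\ep\eta(\ep)$; a coordinate-wise total-variation bound $\int|\dot{\tilde\xi}_i|\,ds\le Mt$ gives $N\le nMt/\ep+C$ on the number of bad intervals, and summing yields total extra cost $\le CN\ep\eta(\ep)+C\ep\le C(\ep+\eta(\ep)t)$; sending $\delta\to 0$ completes the proof of the middle-right inequality. The main obstacle I anticipate is the Lipschitz reduction of $\tilde\xi$ with constants independent of $\ep$, since without a uniform speed bound the number of hole crossings cannot be controlled at the required rate; once this is secured, the A/B case split is what guarantees that both slow/stationary and fast entries into holes contribute only $\ep\eta(\ep)$ per visit, independently of how long $\tilde\xi$ happens to dwell inside that hole.
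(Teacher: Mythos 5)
Your outline matches the paper's strategy: the outer inequalities from \cite{TY2022}, the middle-left inequality by set inclusion of admissible paths, and the middle-right inequality by detouring around the holes. But there is a genuine gap in the detour argument, specifically in the count $N$.

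You define $N$ as the number of \emph{maximal open sub-intervals} of $[0,t]$ on which $\tilde\xi$ lies strictly inside some hole, and claim $N\le nMt/\ep+C$ from a coordinate-wise total-variation bound. This is not correct: a Lipschitz curve with $|\dot{\tilde\xi}|\le M$ can enter and exit the \emph{same} hole arbitrarily (even countably infinitely) many times, with negligible arc length traversed between successive exits and re-entries, so the number of such maximal intervals is not controlled by arc length divided by $\ep$. Arc length controls the number of \emph{distinct holes} visited (since distinct holes in $\ol\Omega_\ep$ are separated by at least $\ep/2$), not the number of maximal sub-intervals. Since each bad interval, in your accounting, can contribute up to $C\ep\eta(\ep)$ extra cost, an unbounded $N$ would destroy the estimate. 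The fix is exactly what the paper does: for each distinct hole $\ep(m_j+\eta(\ep)D)$ that $\tilde\xi$ meets, take a single interval $[s_j,t_j]$ from the \emph{first} entry time to the \emph{last} exit time, and detour once from $\tilde\xi(s_j)$ to $\tilde\xi(t_j)$ along $\partial(\ep(m_j+\eta(\ep)D))$ while waiting at the endpoint for the remaining time (cost-free by (A5)). Then the count equals the number of distinct holes visited, which is $\le 2M_0t/\ep+1$ because traveling between distinct holes requires arc length at least $\ep/2$. Your Case A/B split still works for the lumped intervals (long $t_j-s_j$ falls under Case A; short $t_j-s_j$ under Case B), so after this regrouping your per-hole cost bound of $C\ep\eta(\ep)$ and the summation go through.

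A secondary, more cosmetic point: the reduction to a Lipschitz curve via ``$\delta$-suboptimal curve, smoothing of $H$, Pontryagin, and a limit'' is an unnecessary detour. For convex coercive $H$ satisfying (A1)--(A4), the optimal curve for $\tilde u^\ep$ exists and satisfies $\|\dot{\tilde\xi}\|_{L^\infty}\le M_0$ directly (this is the content of Lemma~\ref{lem:velocity bound} in the whole-space setting; see also \cite{Tran}). The paper simply takes the optimal curve and uses its velocity bound, which is cleaner and avoids the extra limiting argument. Also note that the paper, in addition to bounding the detour cost, bounds the original cost $\int_{s_j}^{t_j}L(\gamma,\dot\gamma)\,ds\le C\eta(\ep)$ by comparing against a straight-line-then-wait competitor and invoking optimality of $\gamma$ on subintervals; you instead drop the original cost using $L\ge 0$ from (A5), which is equally valid and slightly tighter, so that part is fine.
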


We note that the requirement on $\Omega_\ep$ in Theorem \ref{thm:main2} is stricter than that of Theorem \ref{thm:main1}. In particular, there cannot be any unbounded connected component in $\R^n\setminus {\ol \Omega_\ep}$. We think of $D$ as the model hole for the perforated domain. The restriction that $\partial D$ is connected can be lifted as long as $\Omega_\ep$ remains connected, for example, if $D$ consists of several separated parts that have connected boundary (or in other words, if the `hole' is actually a group of several separated holes). 
We say that $m+\eta(\ep)D$ is a hole of $\Om^{\eta(\ep)}$ and $\ep(m+\eta(\ep)D)$ is a hole of $\Om_\ep$ for $m\in \Z^n$, respectively.
For the Hamiltonian $H$, we need to require in addition that it satisfies (A5).
A prototypical example of $H$ in Theorem \ref{thm:main2} is $H(y,p)=a(y)|p|$ for $a\in C(\T^n,(0,\infty))$, which comes from the first-order front propagation framework.
To the best of our knowledge, this is the first time that the homogenization of Hamilton--Jacobi equations in a dilute perforated domain is studied, and the convergence rate obtained is essentially optimal (see Lemma \ref{lem:thm2-optimal}).

\begin{rem} \label{rem:1}
    As we are in the convex setting, we have the inf-sup representation formulas for the effective Hamiltonians (see \cite{Tran}): For $p\in \R^n$,
    \[
    \ol H_0(p) =\ol H_{\R^n}(p)= \inf_{\varphi \in \Lip(\T^n)} \esssup_{y\in \T^n} H(y,p+D\varphi(y)),
    \]
    and
    \begin{equation}\label{formula:effective-Hamiltonian}
    \ol H_\Om(p) = \inf_{\varphi \in \Lip(\T^n)} \esssup_{y\in \Om} H(y,p+D\varphi(y)).
    \end{equation}
    The inf-sup formulas confirm again that $\ol H_\Om \leq \ol H_0$.
    The formula \eqref{formula:effective-Hamiltonian} is well-known, but we give a proof in Lemma \ref{lem:inf-sup} in Appendix \ref{appendix} for the reader's convenience. 

    Let us now consider a sequence of nested domains $\{\Om_k\}$ such that $\Om_k \subset \Om_{k+1}$ for $k\in \N$ and 
    \[
    \bigcup_{k\in \N} \Om_k = \R^n \setminus \Z^n.
    \]
    Then, by the usual stability results for viscosity solutions, we have that, for $p\in \R^n$,
    \[
    \lim_{k\to \infty} \ol H_{\Om_k}(p) = \ol H_0(p).
    \]
    However, no convergence rate of $\ol H_{\Om_k}$ to $\ol H_0$ in the general setting is known in the literature.

    In the setting of Theorem \ref{thm:main2}, $\{\Omega^{\eta(\ep)}\}$ is also a sequence of nested domains with $\Omega^{\eta(\ep_1)} \subset \Omega^{\eta(\ep_2)}$ for $\ep_1>\ep_2>0$ and
     \[
    \bigcup_{\ep>0} \Om^{\eta(\ep)} = \R^n \setminus \Z^n.
    \]
    We have qualitatively that
    \[
    \lim_{\ep \to 0} \ol H_{\Om^{\eta(\ep)}} = \ol H_0.
    \]
    Let $\tilde u^{\eta(\ep)}$ be the unique viscosity solution to \eqref{eqn:PDE_limit} with $\ol H_{\Om^{\eta(\ep)}}$ in place of $\ol H=\ol H_\Om$.
    By repeating the proof of Theorem \ref{thm:main1}, we have readily that, for $\ep\in (0,1)$, 
    \[
    \|u^\ep - \tilde u^{\eta(\ep)}\|_{L^\infty(\Om_\ep\times [0,\infty))} \leq C\ep.
    \]
    Note that $C$ does not depend on $\eta(\ep)$.
    However, we cannot control $\tilde u^{\eta(\ep)} -\tilde u$ quantitatively yet because we do not have any quantitative bound on $\ol H_{\Om^{\eta(\ep)}} - \ol H_0$.
    This shows that the quantitative convergence result in Theorem \ref{thm:main2} is rather hard to obtain through the above roadmap.

\end{rem}

Once Theorem \ref{thm:main2} is obtained, then we can use it to deduce back a quantitative bound for $\ol H_{\Om^{\eta(\ep)}} - \ol H_0$ in Corollary \ref{cor:main2}.
For some related state-constraint problems in changing domains, we refer the reader to \cite{KTT, Tu2, TuZhang} and the references therein for quantitative convergence results.

\smallskip

Finally, we study a homogenization problem with domain defects.
Here, domain defects mean some holes in $\Om_\ep$ are missing.
Let us describe the setting.

Let $D\Subset (-\frac{1}{2},\frac{1}{2})^n$ be an open connected set  with $C^1$ boundary containing $0$.
Let $\Omega = \R^n\setminus \bigcup_{m\in \Z^n}(m+\ol D)$ and $\Omega_\ep = \ep \Omega$ for $\ep>0$.
We say that $m+D$ is a hole of $\Om$ and $\ep(m+D)$ is a hole of $\Om_\ep$ for $m\in \Z^n$, respectively.
Note that the holes' diameter of $\Om_\ep$ is $O(\ep)$, and we are in the usual setting, not the dilute one.
We remark that while $\Omega$ in Figure \ref{fig1} satisfies this requirement, the ones in Figures \ref{figVan} and \ref{fig2} do not.
Let $I \subsetneq \Z^n$ with $I \ne \emptyset$ be an index set that denotes the places where the holes are missing. Define
\[
W=\Omega \cup \bigcup_{m\in I} (m+ \ol D)= \R^n\setminus \bigcup_{m\in \Z^n\setminus I}(m+\ol D), \qquad W_\ep = \ep W.
\]
For $k\in \N$, let
\[
I_k = I \cap [-k,k]^n.
\]
We assume
\begin{equation}\label{eqn:assumpI}
    \frac{\left|I_k\right|}{k} = \omega_0\left(\frac{1}{k}\right) 
\end{equation}
where $\left|I_k\right|$ is the cardinality of the set $I_k$ and $\omega_0$ is a modulus of continuity. 
Note that $I_k$ collects all indices $m \in \mathbb{Z}^n \cap [-k, k]^n$ such that $Y_m: = m+[-\frac{1}{2},\frac{1}{2}]^n$ does not have a hole. 
For instance, if $\left| I_k\right|$ is of the size $ k^\theta$ for some $\theta \in (0, 1)$ as $k\to \infty$, then $\omega_0\left(s\right)=s^{1-\theta}$ for $s>0$. 

Interestingly, $W$ is not $\Z^n$-periodic anymore (see Figure \ref{figW}).
For $\varepsilon >0$,  let $w^\varepsilon$ be the unique viscosity solution to the state-constraint problem
\begin{equation}\label{eqn:w_epsilon}
    \left\{\begin{aligned}
    w_t^\varepsilon+H\left(\frac{x}{\varepsilon}, Dw^\varepsilon\right) & \leq 0 \qquad \quad \text{in } W_\varepsilon \times (0, \infty),\\
    w_t^\varepsilon+H\left(\frac{x}{\varepsilon}, Dw^\varepsilon\right) & \geq 0 \qquad \quad \text{on } \overline{W}_\varepsilon \times (0, \infty), \\
    w^\varepsilon(x,0) & =g(x) \,\,\,\, \quad \text{on } \overline{W}_\varepsilon \times \{t=0\}. 
    \end{aligned}
    \right.
\end{equation}
Note that the only difference between \eqref{eqn:w_epsilon} and \eqref{eqn:PDE_epsilon} is the domain of consideration $\Om_\ep \subsetneq W_\ep$. 
We now show that if we can control the size of $I$, which is allowed to be infinite, the domain defects do not affect the limiting behavior.
Specifically, we prove that $w^\ep$ behaves essentially like $u^\ep$, the solution of \eqref{eqn:PDE_epsilon}, as $\ep \to 0$.
Moreover, we also get a convergence rate of $w^\ep$ to $u$, the solution of \eqref{eqn:PDE_limit}.

\begin{center}
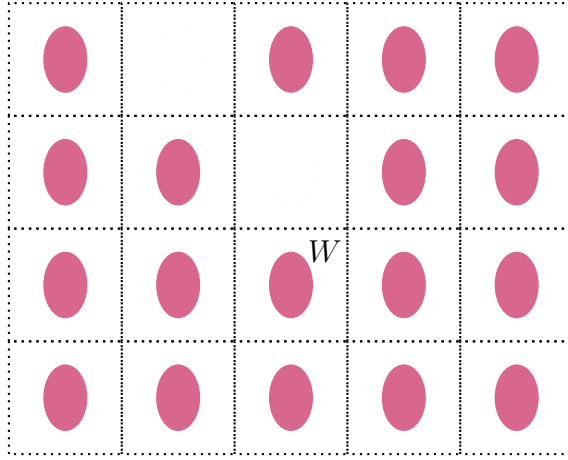

    \begin{tikzpicture}[scale=1.5]
    \foreach \x in {0.5,1.5,...,4.5}
        \foreach \y in {0.5,1.5,...,3.5}
            \filldraw[fill=purple!60,draw=white] (\x,\y) ellipse (0.2 cm and 0.3 cm);
     \foreach \x in {0.5,1.5,...,4.5}
        \foreach \y in {0.5,1.5,...,3.5}
            \draw[thick, dotted] (\x-0.5,\y-0.5) rectangle (\x+0.5,\y+0.5);
     \filldraw[fill=white,draw=white] (2.5,2.5) ellipse (0.2 cm and 0.3 cm);
      \filldraw[fill=white,draw=white] (1.5,3.5) ellipse (0.2 cm and 0.3 cm);
    \draw (2.8,1.8) node{$W$};
    
\end{tikzpicture}
\captionof{figure}{$W$ with some missing holes in two dimensions}\label{figW}
\end{center}

\begin{thm}\label{thm:main3}
    Assume {\rm (A1)--(A5)}, \eqref{eqn:assumpI} and the above setting.
    For $\ep>0$, let $u^\ep$ and $w^\ep$ be the unique viscosity solutions to \eqref{eqn:PDE_epsilon} and \eqref{eqn:w_epsilon}, respectively. Then, there exists a constant $C=C\left(n, \partial D, H, \|Dg\|_{L^\infty(\mathbb{R}^n)} \right) >0$ and $\tilde{C}=\tilde{C}\left(n, \partial D, H, \|Dg\|_{L^\infty(\mathbb{R}^n)} \right) >0$ such that
    \begin{equation}\label{eqn:main3}
    \begin{aligned}
    &u(x, t) - C\left(M_0t+|x| +1 \right) \omega_0 \left(\frac{\varepsilon}{ M_0t +|x|}\right) -\tilde{C} \varepsilon \\
    \leq \ & u^\varepsilon(x, t) -  C\left(M_0t+|x| +1 \right) \omega_0 \left(\frac{\varepsilon}{ M_0t +|x|}\right) - C\varepsilon \\
    \leq \ & w^\varepsilon(x, t) \leq u^\varepsilon(x, t) \leq u(x, t) +C \varepsilon.
    \end{aligned}
    \end{equation}
for all $\varepsilon \in (0, 1)$ and $\left(x, t\right) \in \overline W_\varepsilon \times [0, \infty)$. In particular, for $\varepsilon \in (0, 1)$ and $\left(x, t\right) \in \overline W_\varepsilon \times [0, \infty)$,
\begin{equation}\label{eq:rate-thm3}
\left|w^\varepsilon (x, t) - u(x, t)\right| \leq C\left(M_0t+|x| +1 \right) \omega_0 \left(\frac{\varepsilon}{ M_0t +|x|}\right) + C\varepsilon.
\end{equation}
\end{thm}

The optimality of the convergence rate in Theorem \ref{thm:main3} is confirmed by Lemma \ref{lem:W-op-2} with $\om_0(s)=s^{\frac{1}{2}}$ for $s>0$.
It is worth emphasizing that the control of the size of $I$ is optimal, that is, if we allow the size of $I$ to be bigger, then we will not observe the same limiting behavior (see Lemma \ref{lem:W1}).
One intriguing aspect of \eqref{eq:rate-thm3} is that it eliminates the necessity of pinpointing the exact locations of domain defects; instead, it relies solely on discerning the relative frequency of their occurrence.
It also does not hold if (A5) is not assumed (see Lemma \ref{lem:no-A5}).
To the best of our knowledge, this type of homogenization problem with domain defects has not been studied in the literature.

As $I$ can be infinite, it does not seem clear if the cell problem \eqref{eqn:cell} with $W$ in place of $\Om$ has sublinear solutions.
This is fine as we do not use the corrector approach in the proofs of Theorems \ref{thm:main1}--\ref{thm:main3}, and we use the optimal control formula together with the metric distance approach.
While the corrector approach works for general settings including the nonconvex cases, it does not give the optimal convergence rates.

We also note that the results of Theorems \ref{thm:main1}--\ref{thm:main3} hold if we only require $g\in \Lip(\R^n)$, a weaker assumption than (A4).
This is clear as the constant $C=C\left(n, \partial D, H, \left\|Dg\right\|_{L^\infty\left(\mathbb{R}^n\right)}\right)$ in the bounds of Theorems \ref{thm:main1}--\ref{thm:main3} depends only on $\left\|Dg\right\|_{L^\infty\left(\mathbb{R}^n\right)}$ but not $\left\|g\right\|_{L^\infty\left(\mathbb{R}^n\right)}$.

\subsection*{Notations} 
Let $\{e_1,e_2,\ldots,e_n\}$ be the canonical basis of $\R^n$.
We write $Y=[-\frac{1}{2},\frac12]^n$ 
as the unit cube in $\R^n$.
For $m\in \Z^n$, set $Y_m=m+[-\frac{1}{2},\frac{1}{2}]^n$, which is the cube of unit size centered at $m$.
For $\emptyset \neq U, V \subset \R^n$, denote by $\dist(U,V)=\inf_{x\in U, y\in V} |x-y|$.
For $x,y\in \R^n$, denote by $[x,y]$ the line segment connecting $x$ and $y$.
Let $\mathrm{AC}(J,U)$ be the set of absolutely continuous curves $\xi:J \to U$.
To avoid confusion, we recall that $\overline{H}=\ol H_\Om$ is the effective Hamiltonian corresponding to $H$ of the state-constraint problem on $\ol \Omega$, and $\ol H_0=\ol H_{\R^n}$ is the effective Hamiltonian corresponding to $H$ in the whole space.
If a function $h:\R^n\to \R$ is $\Z^n$-periodic, we can think of $h$ as a function from $\T^n$ to $\R$ and vice versa.

\subsection*{Organization of this paper}
In Section \ref{sec:prelim}, we give some preliminaries and extend the cost function (the metric distance) to the whole space.
We prove the subadditivity and superadditivity of the extended cost function in Section \ref{sec:metric}.
The proof of Theorem \ref{thm:main1} is given in Section \ref{sec:main1}.
Section \ref{sec:main2} is devoted to the study of homogenization in a dilute setting, which includes the proof of Theorem \ref{thm:main2}.
The problem of domain defects and the proof of Theorem \ref{thm:main3} are given in Section \ref{sec:main3}.
In Appendix \ref{appendix}, we give the proofs of some auxiliary results.

\section{Preliminaries and extension of the cost function to $\mathbb{R}^n$} \label{sec:prelim}

We introduce the following definition of a metric, which measures the cost to go from one point to another on $\overline{\Omega}$ in a given time.

\begin{defn}\label{def:m}
Let $x, y\in \overline{\Omega}$ and $t>0$. 
Define 
\begin{equation}
    m(t,x,y)= \inf \left\lbrace \int_0^{t} L\left(\gamma(s),\dot{\gamma}(s)\right) \,ds: \gamma\in \mathrm{AC}\left(\left[0,t\right];\overline{\Omega}\right), \gamma(0)=x, \gamma\left(t\right)=y \right\rbrace.
\end{equation}
\end{defn}

Figure \ref{fig3} gives an example of an admissible path $\gamma\in \mathrm{AC}\left(\left[0,t\right];\overline{\Omega}\right)$.
Note that $\gamma$ might touch and run on $\partial \Omega$ for some time.

\begin{center}
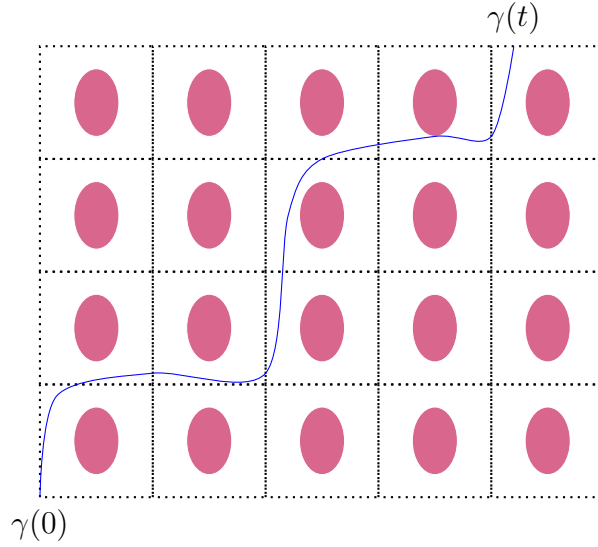

    \begin{tikzpicture}[scale=1.5]
    \foreach \x in {0.5,1.5,...,4.5}
        \foreach \y in {0.5,1.5,...,3.5}
            \filldraw[fill=purple!60,draw=white] (\x,\y) ellipse (0.2 cm and 0.3 cm);
     \foreach \x in {0.5,1.5,...,4.5}
        \foreach \y in {0.5,1.5,...,3.5}
            \draw[thick, dotted] (\x-0.5,\y-0.5) rectangle (\x+0.5,\y+0.5);
\draw [blue] plot [thick, smooth] coordinates { (0,0) (0.15,0.9) (1,1.1) (2,1.1) (2.2,2.5) (2.5,3) (3.5,3.2) (4,3.2) (4.2,4)};
\draw (0,-0.25) node{$\gam(0)$} (4.2,4.25) node{$\gam(t)$};

\end{tikzpicture}
\captionof{figure}{An admissible path}\label{fig3}
\end{center}

\medskip

The following lemma, the proof of which is given in Appendix \ref{appendix}, tells us that all the optimal paths for $u^\varepsilon$ and $u$ have a uniform velocity bound, which will help us simplify the optimal control formulas for $u^\varepsilon$ and $u$.

\begin{lem}\label{lem:velocity bound}
Assume {\rm(A1)--(A4)}. Let $\varepsilon, t>0$ and $x\in \mathbb{R}^n$. Suppose that $\gamma:\left[0, \frac{t}{\varepsilon}\right] \to \overline{\Omega}$ is a minimizing curve of $u^\varepsilon(x, t)$ in the sense that $\gamma$ is absolutely continuous, and
\begin{equation}
u^\varepsilon (x, t) = \varepsilon \int_0^{\frac{t}{\varepsilon}} L\left(\gamma(s),\dot{\gamma}(s)\right) \,ds+g\left(\varepsilon \gamma(0)\right)  
\end{equation}
with $\gamma\left(\frac{t}{\varepsilon}\right)=\frac{x}{\varepsilon}$. 
Then there exists a constant $M_0=M_0\left(n,\partial \Omega, H, \|Dg\|_{L^\infty(\mathbb{R}^n)}\right)>0$ such that 
\[
\left\|\dot{\gamma}\right\|_{L^\infty([0,\frac{t}{\varepsilon}])}  \leq M_0.
\]
\end{lem}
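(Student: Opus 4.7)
The plan is to combine three ingredients already in the paper: the global Lipschitz bound \eqref{eqn:u_prior} for $u^\varepsilon$, the Dynamic Programming Principle (DPP) along the minimizer $\gamma$, and the quadratic lower bound $L(y,v)\geq \tfrac{|v|^2}{2}-K_0$ from \eqref{eqn:K_0L}. Together these will yield a pointwise-a.e.\ quadratic inequality in $|\dot\gamma(s)|$, forcing the bound $M_0$ to depend only on $C_0$ and $K_0$, hence only on $H$ and $\|Dg\|_{L^\infty(\R^n)}$.

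First, I would verify the DPP along the optimal curve: since restrictions of minimizers remain minimizers for the corresponding subproblems (otherwise \eqref{eqn:ocfue} would produce a strictly smaller cost), one obtains, for every $s\in[0,t/\varepsilon]$,
\[
u^\varepsilon\bigl(\varepsilon\gamma(s),\varepsilon s\bigr) = \varepsilon \int_0^s L\bigl(\gamma(r),\dot\gamma(r)\bigr)\,dr + g\bigl(\varepsilon\gamma(0)\bigr),
\]
so $\phi(s):=u^\varepsilon(\varepsilon\gamma(s),\varepsilon s)$ is absolutely continuous with $\phi'(s)=\varepsilon L(\gamma(s),\dot\gamma(s))$ for a.e.\ $s$. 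On the other hand, \eqref{eqn:u_prior} gives $|Du^\varepsilon|,|u^\varepsilon_t|\leq C_0$, so the chain rule provides the upper bound $|\phi'(s)| \leq \varepsilon C_0(|\dot\gamma(s)|+1)$ a.e. Comparing the two expressions for $\phi'$ yields $L(\gamma(s),\dot\gamma(s)) \leq C_0(|\dot\gamma(s)|+1)$ a.e.

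Inserting the lower bound from \eqref{eqn:K_0L} then produces
\[
\frac{|\dot\gamma(s)|^2}{2}-K_0 \leq C_0|\dot\gamma(s)|+C_0 \qquad \text{for a.e. } s,
\]
a quadratic inequality in $|\dot\gamma(s)|$ whose nonnegative solutions are uniformly bounded by $M_0 := C_0+\sqrt{C_0^2+2C_0+2K_0}$. Since $C_0$ and $K_0$ depend only on $H$ and $\|Dg\|_{L^\infty(\R^n)}$, so does $M_0$, giving the claimed uniform bound on $\|\dot\gamma\|_{L^\infty}$.

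The main potential obstacle is justifying the DPP rigorously along the entire optimal trajectory in the state-constrained setting: one must confirm that for any subinterval the restricted path is admissible and optimal with respect to the correct smaller problem, and that absolute continuity of $\phi$ is indeed transferred into the pointwise-a.e.\ identity $\phi'(s)=\varepsilon L(\gamma(s),\dot\gamma(s))$. However, since the admissibility class in \eqref{eqn:ocfue} already constrains paths to $\overline\Omega$, this restriction principle is essentially built into the definition, and no additional difficulty is created by the presence of $\partial\Omega$; the remainder of the argument is purely algebraic.
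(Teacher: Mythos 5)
Your proof is correct and follows essentially the same approach as the paper's: use the dynamic programming identity along the minimizer to identify $\phi'(s)=\varepsilon L(\gamma(s),\dot\gamma(s))$ a.e., bound $|\phi'|$ via the Lipschitz estimate \eqref{eqn:u_prior}, and close with the quadratic lower bound on $L$ from \eqref{eqn:K_0L}. The only minor imprecision is invoking the ``chain rule'' for the Lipschitz (not $C^1$) function $u^\varepsilon$, but the bound $|\phi'(s)|\le \varepsilon C_0(|\dot\gamma(s)|+1)$ follows directly from the Lipschitz estimate by passing to the limit in difference quotients at Lebesgue points of $\dot\gamma$, so the argument is sound.
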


Using Definition \ref{def:m} and Lemma \ref{lem:velocity bound}, we can rewrite the optimal control formula for $u^\varepsilon$ as the following:

\begin{equation}
\begin{aligned}
    u^\varepsilon\left(x, t\right) &= \inf\left\{\varepsilon m\left(\frac{t}{\varepsilon}, \frac{y}{\varepsilon}, \frac{x}{\varepsilon}\right)+g\left(y\right): y \in \varepsilon \overline{\Omega}\right\}\\
    &= \inf\left\{\varepsilon m\left(\frac{t}{\varepsilon}, \frac{y}{\varepsilon}, \frac{x}{\varepsilon}\right)+g\left(y\right): \left|x-y\right|\leq M_0t, y \in \varepsilon \overline{\Omega}\right\}.
\end{aligned}
\end{equation}

Note that in Definition \ref{def:m}, $m$ is only defined for $x, y \in \overline{\Omega}$. We extend this metric to the whole $\mathbb{R}^n \times \R^n$ as follows.
\begin{defn}
    Let $x, y\in \mathbb{R}^n$ and $t>0$. Define 
\begin{equation}
     m^{\ast}\left(t,x,y\right)= \inf \left\lbrace m\left(t, \tilde{x}, \tilde{y}\right): \tilde{x}, \tilde{y} \in \partial \Omega, \tilde{x} - x \in Y, \tilde{y} - y \in Y \right\rbrace. 
\end{equation} 
\end{defn}
As we prove in Lemmas \ref{lem:subadd}--\ref{lem:superadd}, 
$m^\ast$ satisfies the subadditivity and superadditivity properties, 
which guarantee the limit of the scaling (large time average) $\frac{1}{k}m^\ast(kt,kx,ky)$ as $k\to\infty$. 
The subadditivity and superadditivity further allow us to quantify the convergence rate of this limit optimally in Theorem \ref{thm:mbarmstarrate}.
Moreover, as we prove in Proposition \ref{prp:mstarm} below, 
$m-m^\ast$ is bounded, which plays a key role in characterizing the limit of $u^\ep$ as $\ep\to0$ (see Lemma \ref{lem:ubaru}.)
Combining these results, we obtain Theorem \ref{thm:main1}. 


We now explore some properties of the metric $m^\ast$ and its connection with $m$ in the following two propositions. First, we provide the existence of a particular admissible path for $m^\ast$ with a velocity bound and get an upper bound for $m^\ast$. 
See \cite[Lemma 2.6]{HorieIshii1998} for a related result.

\begin{prop} \label{prop:mstarbound}
    Let $ t\geq \delta$ for some $\delta >0$, and $x, y \in \mathbb{R}^n$ with $|x-y|\leq M_0t$. Then, there exists an absolutely continuous curve $\xi:[0,t] \to \overline{\Omega}$ such that $\xi(0)=\tilde{x}$ and $\xi(t)=\tilde{y}$, for some $\tilde{x}, \tilde{y} \in \partial \Omega$ with $\tilde{x} - x \in Y, \tilde{y} - y \in Y$. Moreover, for some constant $C_b>0$ that only depends on $\partial \Omega$ and $n$, we have 
    \begin{equation}
    \label{eq:curvepO}
    \left\|\dot{\xi}\right\|_{L^\infty([0,t])} \leq C_b \left(M_0+\frac{2\sqrt{n}}{\delta}\right), \qquad \displaystyle m ^\ast \left(t, x, y\right) \leq \left( \frac{C_b^2 \left(M_0+\frac{2\sqrt{n}}{\delta}\right)^2}{2} +K_0 \right) t.
    \end{equation}
\end{prop}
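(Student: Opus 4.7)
The plan is to exhibit an explicit curve $\xi$ in $\overline{\Omega}$ that connects appropriately chosen boundary points $\tilde{x},\tilde{y}\in\partial\Omega$ near $x$ and $y$, travels at nearly constant speed, and then to plug this into the quadratic upper bound \eqref{eqn:K_0L} to control $m^\ast$.

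First, I would locate $\tilde{x},\tilde{y}\in\partial\Omega$ with $\tilde{x}-x,\tilde{y}-y\in Y$. Since $\partial\Omega$ is nonempty and $\Z^n$-periodic, and every translate $x+Y$ of the unit cube surjects onto $\T^n$, the intersections $(x+Y)\cap\partial\Omega$ and $(y+Y)\cap\partial\Omega$ are nonempty. As a by-product, $|\tilde{x}-x|,|\tilde{y}-y|\leq\sqrt{n}/2$, so $|\tilde{x}-\tilde{y}|\leq|x-y|+\sqrt{n}\leq M_0 t+\sqrt{n}$.

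Second, I would construct a Lipschitz path in $\overline{\Omega}$ from $\tilde{x}$ to $\tilde{y}$ whose total length is bounded by $C_b(M_0 t+2\sqrt{n})$ for a constant $C_b$ depending only on $\partial\Omega$ and $n$. The idea is to fix a reference point $p_0\in\partial\Omega$ and, using the connectedness of $\Omega$ together with the $C^1$ regularity of $\partial\Omega$, pick rectifiable curves $\alpha_1,\dots,\alpha_n$ in $\overline{\Omega}$ joining $p_0$ to $p_0+e_j$ of some common length bound $\ell=\ell(\partial\Omega,n)$. By $\Z^n$-periodicity these yield unit-shift connectors $m+\alpha_j$ starting at any translate $p_0+m$, and their concatenation bridges $p_0+m$ to $p_0+m'$ in $\overline{\Omega}$ with total length $\leq\ell\sqrt{n}\,|m-m'|$. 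Short local curves within a bounded cluster of neighboring cells link $\tilde{x}$ to the closest translate of $p_0$ (and likewise for $\tilde{y}$), contributing only an additive constant.

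Third, parametrize this concatenated curve on $[0,t]$ at constant speed; then $\|\dot\xi\|_{L^\infty([0,t])}\leq C_b(M_0+2\sqrt{n}/\delta)$, which uses the hypothesis $t\geq\delta$. Finally, \eqref{eqn:K_0L} gives
\[
m^\ast(t,x,y)\leq\int_0^t L(\xi(s),\dot\xi(s))\,ds\leq\left(\frac{C_b^2(M_0+2\sqrt{n}/\delta)^2}{2}+K_0\right)t.
\]

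The main obstacle is the second step: producing, for arbitrary boundary points $\tilde{x},\tilde{y}\in\partial\Omega$, a path in $\overline{\Omega}$ of length linear in $|\tilde{x}-\tilde{y}|$ with a constant depending only on $\partial\Omega$ and $n$. This is a purely geometric statement about periodic perforated domains with $C^1$ boundary, and it is exactly where global connectedness of $\Omega$ (not just local) enters; a closely related version appears as \cite[Lemma 2.6]{HorieIshii1998}, which I would follow in the detailed proof.
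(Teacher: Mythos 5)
Your proposal is correct but takes a genuinely different route from the paper. The paper starts from the straight segment $\xi(s)=\tilde x+\frac{s}{t}(\tilde y-\tilde x)$, which automatically has velocity at most $M_0+2\sqrt{n}/\delta$, and then modifies only the excursions of this segment into $\overline\Omega^c$: each such excursion has endpoints $p,q$ on a single connected component $M$ of $\partial\Omega$, and Lemma~\ref{lem:pOmetric} supplies a detour along $M$ of arc length at most $C_b|p-q|$, which when reparametrized on the original time interval multiplies the local velocity by at most $C_b$. Your construction instead builds a ``skeleton'' path through lattice translates of a fixed reference boundary point $p_0$, via unit-cell connectors $\alpha_1,\dots,\alpha_n$ obtained from connectedness and periodicity, with short local bridges at both ends; you then reparametrize the whole concatenation at constant speed. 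Both rest on the same geometric input --- uniform path-connectedness of $\overline\Omega$ at scale $O(1)$, which is exactly what Lemma~\ref{lem:pOmetric} and \cite[Lemma 2.6]{HorieIshii1998} quantify --- but the paper's ``modify the line'' strategy preserves the time parametrization and yields the velocity bound directly, whereas your ``length-then-reparametrize'' strategy gives a total-length bound first. To make your argument fully rigorous, the step linking $\tilde x$ (resp.\ $\tilde y$) to the nearest translate of $p_0$ by a path of uniformly bounded length does require a compactness argument essentially equivalent to Lemma~\ref{lem:pOmetric}, which you correctly flag; once that is in place, the constants absorb as claimed and the final estimate follows from \eqref{eqn:K_0L} exactly as in the paper.
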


\begin{proof}
Choose any $\tilde{x}, \tilde{y} \in \partial \Omega$ such that $\tilde{x} - x \in Y, \tilde{y} - y \in Y$. Consider the path $\xi(s): = \displaystyle \frac{s}{t}\left( \tilde{y}-\tilde{x}\right) + \tilde{x}$ for $s\in [0,t]$, which is a straight line segment connecting $\tilde{x}$ and $\tilde{y}$. Note that for any $s \in (0, t)$,
\begin{equation}
\begin{aligned} \label{eqn:bv}
  \left|\dot{\xi}(s) \right|= \frac{\left|\tilde{y}-\tilde{x}\right|}{t} &\leq \frac{\left|\tilde{y}-y\right|}{t} + \frac{\left|y-x\right|}{t} + \frac{|x-\tilde{x}|}{t} \\
  &\leq  M_0 + \frac{2\sqrt{n}}{t}
   \leq M_0 + \frac{2\sqrt{n}}{\delta}.
\end{aligned}
\end{equation}
If the whole segment $[\tilde x,\tilde y]$ is contained in $\ol\Omega$, $\xi$ is a curve satisfying the required velocity bound. Otherwise, $\xi([0,t])$ has nonempty intersections with $\ol\Omega^c$, and we modify $\xi$ to obtain a desired path. In the latter case, there are finitely many time segments $[t_1,t_2]\subset [0,t]$ such that $\xi(s) \in \overline{\Omega}^c$ for $s \in \left(t_1, t_2\right)$ and $\xi(t_1), \xi(t_2) \in \partial \Omega$.
\medskip

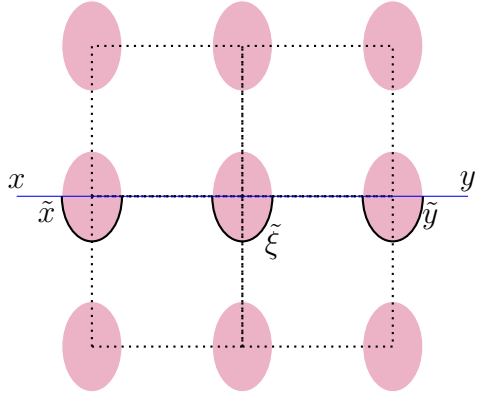
\begin{figure}
\begin{center}
    \begin{tikzpicture}
    \foreach \x in {1,3,5}
        \foreach \y in {1,3,5}
            \filldraw[fill=purple!30,draw=white] (\x,\y) ellipse (0.4 cm and 0.6 cm);
     \foreach \x in {2,4}
        \foreach \y in {2,4}
            \draw[thick, dotted] (\x-1,\y-1) rectangle (\x+1,\y+1);
    \draw[blue] (0,3)--(6,3);
    \draw[black, thick] (0.6,3) arc(180:360:0.4cm and 0.6cm);
    \draw[black, thick] (2.6,3) arc(180:360:0.4cm and 0.6cm);
    \draw[black, thick] (4.6,3) arc(180:360:0.4cm and 0.6cm);
    \draw[black, dotted, thick] (1.4,3)--(2.6,3);
    \draw[black, dotted, thick] (3.4,3)--(4.6,3);
    \draw (0,3.2) node{$x$} (0.4, 2.8) node{$\tilde x$} (6,3.2) node{$y$} (5.5,2.75) node{$\tilde y$} (3.4,2.4) node{$\tilde \xi$};
\end{tikzpicture}
\caption{A modified admissible path $\tilde \xi$}\label{fig4}
\end{center}
\end{figure}
\medskip

For each such segment, the end points $p= \xi(t_1)$ and $q=\xi(t_2)$ must belong to a same connected component $M$ of $\partial \Omega$, and by Lemma \ref{lem:pOmetric} in Appendix \ref{appendix}, there is a curve $\gamma \in C^1([0,1];M)$ joining $p$ to $q$ with arc length $\ell$, and, moreover, $\ell \le C_b|p-q|$. Let $\tilde \gamma: [0,\ell] \to M$ be the arc length (re-)parametrization of $\gamma$. We modify $\xi(s)$ for $s\in (t_1,t_2)$ to
$$
\tilde\xi(s) = \tilde\gamma\left(\frac{(s-t_1)\ell}{t_2-t_1}\right).
$$
Since $|\dot{\tilde\gamma}| =1$, we check that for all $s\in (t_1,t_2)$,
\begin{equation*}
\left|\dot{\tilde \xi}(s)\right| = \frac{\ell}{t_2-t_1} \le \frac{C_b|p-q|}{t_2-t_1} = C_b|\dot\xi(s)| \le C_b\left(M_0+\frac{2\sqrt{n}}{\delta}\right).
\end{equation*}
Modify each segment of $\xi([0,t])$ that is in $\ol\Omega^c$ in the above manner, and call the whole modified curve as $\tilde\xi$.
See Figure \ref{fig4} for one representative example of $\tilde \xi$.
It is a path on $\ol\Omega$ satisfying the desired velocity bound in \eqref{eq:curvepO} and hence an admissible path for the definition of $m^*(t,x,y)$. Finally, using the bound \eqref{eqn:K_0L}, we check
\[
\begin{aligned}
m^\ast\left(t, x, y\right) &\leq \int_0^t L \left( \tilde{\xi}(s), \dot{\tilde{\xi}}(s) \right) \,ds \leq \left(\frac12 \left\|\dot{\tilde{\xi}}\right\|_{L^\infty \left([0,t]\right)}^2 + K_0 \right)t\\
& \leq \left( \frac{C_b^2 \left(M_0+\frac{2\sqrt{n}}{\delta}\right)^2}{2} +K_0 \right) t.
\end{aligned}
\]
This is the desired bound for $m^*$ in \eqref{eq:curvepO} and completes the proof.
\end{proof}

\begin{prop}\label{prop:mstar}

Let $t, \tau>0$ with $t \geq 1$ or $\tau \geq 1$. Consider $x, y, z \in \mathbb{R}^n$ with $|x-y|\leq M_0t$ and $|y-z| \leq M_0\tau$. Then, there exists a constant $C=C\left(n, \partial \Omega, M_0, K_0\right)>0$ such that 
\[
m^\ast\left(t+\tau, x, z\right) \leq m^\ast \left(t, x, y\right)+ m^\ast \left(\tau, y, z\right)+C.
\]
    
\end{prop}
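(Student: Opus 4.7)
The plan is to construct, for arbitrary $\epsilon>0$, an admissible curve joining some $\tilde x\in\partial\Omega\cap(x+Y)$ to some $\tilde z\in\partial\Omega\cap(z+Y)$, defined on $[0,t+\tau]$ and remaining in $\overline\Omega$, whose total cost exceeds $m^\ast(t,x,y)+m^\ast(\tau,y,z)$ by at most a fixed constant and a vanishing error $O(\epsilon)$. By the symmetry of the statement I may assume $t\geq 1$. First, I will pick near-optimal admissible points $\tilde x,\tilde y,\tilde y',\tilde z\in\partial\Omega$ (with $\tilde x-x,\tilde y-y,\tilde y'-y,\tilde z-z\in Y$) along with absolutely continuous curves $\gamma_1:[0,t]\to\overline\Omega$ from $\tilde x$ to $\tilde y$ and $\gamma_2:[0,\tau]\to\overline\Omega$ from $\tilde y'$ to $\tilde z$, whose costs are within $\epsilon$ of $m^\ast(t,x,y)$ and $m^\ast(\tau,y,z)$, respectively. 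Note that $\tilde y,\tilde y'\in\partial\Omega\cap(y+Y)$ lie within Euclidean distance $\sqrt n$ of each other.

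Next, I will replace the last unit of $\gamma_1$ by a bridge that lands at $\tilde y'$ instead of $\tilde y$: on $[t-1,t]$, I substitute $\gamma_1$ by an admissible curve $\beta:[0,1]\to\overline\Omega$ going from $\gamma_1(t-1)$ to $\tilde y'$ in unit time, built from the same straight-segment-with-$\partial\Omega$-reroute procedure as in the proof of Proposition~\ref{prop:mstarbound}. Concatenating $\gamma_1|_{[0,t-1]}$, $\beta$, and $\gamma_2$ (with the appropriate time-shifts) produces an admissible curve from $\tilde x$ to $\tilde z$ of total duration exactly $t+\tau$. Its cost exceeds $m^\ast(t,x,y)+m^\ast(\tau,y,z)+2\epsilon$ by at most the bridge cost minus $\int_{t-1}^t L(\gamma_1,\dot\gamma_1)\,ds$; the removed integral contributes at most $K_0$ thanks to the lower bound in \eqref{eqn:K_0L}, and the bridge cost itself is bounded by $\frac{1}{2}\bigl(C_b|\gamma_1(t-1)-\tilde y'|\bigr)^2+K_0$ via the velocity control of the modified straight segment.

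The hard part will be bounding $|\gamma_1(t-1)-\tilde y|$ (and hence $|\gamma_1(t-1)-\tilde y'|$, since $|\tilde y-\tilde y'|\leq\sqrt n$) by a constant independent of $t$. Jensen's inequality combined with \eqref{eqn:K_0L} gives $|\gamma_1(t-1)-\tilde y|^2\leq \int_{t-1}^t|\dot\gamma_1|^2\,ds\leq 2\int_{t-1}^t L(\gamma_1,\dot\gamma_1)\,ds+2K_0$, so it suffices to bound the tail integral $\int_{t-1}^t L(\gamma_1,\dot\gamma_1)\,ds$ uniformly. This in turn follows from the near-optimality of $\gamma_1|_{[t-1,t]}$ for $m(1,\gamma_1(t-1),\tilde y)$: comparing it with the straight-line-modified alternative on the same subinterval and invoking \eqref{eqn:K_0L} in a self-consistent fashion produces a uniform control depending only on $n,\partial\Omega,K_0$. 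With this estimate in hand, the bridge cost becomes a constant $C=C(n,\partial\Omega,M_0,K_0)$; summing yields $m^\ast(t+\tau,x,z)\leq m^\ast(t,x,y)+m^\ast(\tau,y,z)+C+2\epsilon$, and letting $\epsilon\to 0$ concludes.
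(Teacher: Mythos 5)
Your strategy diverges from the paper's at the crucial step, and that is where the gap lies. You cut $\gamma_1$ at the fixed time $t-1$ and replace the tail $\gamma_1|_{[t-1,t]}$ by a bridge, which requires a \emph{uniform} bound on the tail integral $\int_{t-1}^t L(\gamma_1,\dot\gamma_1)\,ds$ (equivalently, on $|\gamma_1(t-1)-\tilde y|$). Your ``self-consistent'' argument for this bound is circular in a way that does not close. Explicitly, combining your two inequalities gives
\[
|\gamma_1(t-1)-\tilde y|^2 \;\le\; 2\int_{t-1}^t L + 2K_0 \;\le\; C_b^2\,|\gamma_1(t-1)-\tilde y|^2 + 4K_0 + 2\epsilon,
\]
which yields nothing unless $C_b<1$. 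But $C_b\ge 1$ always (the rerouted boundary path is at least as long as the straight segment), and it is typically strictly larger. From the global bound $\int_0^t L\le Ct$ (Proposition~\ref{prop:mstarbound}) together with $L\ge -K_0$, one only gets $\int_{t-1}^t L \le Ct + K_0(t-1)$, which grows linearly in $t$; nothing forces the optimal path to be cheap on the \emph{last} unit interval specifically.

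This is precisely the obstruction that the paper's proof is designed to avoid. Instead of fixing the interval on which to make room for the glue, the paper pigeonholes: since $\int_0^{\lfloor t\rfloor}L\le Ct$ and each of the $4\lfloor t\rfloor$ quarter-intervals contributes at least $-K_0/4$, \emph{some} $d\in\{0,\tfrac14,\dots,\lfloor t\rfloor-\tfrac14\}$ satisfies $\int_d^{d+1/4}L\le C'$ with $C'$ independent of $t$. The path is then compressed by a factor $2$ on $[d,d+\tfrac14]$ (the reparametrization cost is controlled by the cheapness of that quarter-interval via \eqref{eqn:K_0L}), which frees up $\tfrac18$ unit of time in which to insert a short $O(\sqrt n)$-length connecting curve from $\tilde y$ to $\tilde y'$. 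None of the original path is discarded, so no a priori control of any specific piece of $\gamma_1$ is needed. To repair your proposal you would need to replace the ``bound the last unit'' step by some version of this pigeonhole argument; as written, the claim that the tail integral admits a uniform bound depending only on $n,\partial\Omega,K_0$ is not justified.
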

\begin{proof}

Without loss of generality, we can assume $t \geq 1$. Suppose $\xi: [0,t] \to \overline{\Omega}$ is an optimal path for $m^\ast(t, x, y)$, that is,
$\xi\left(0\right)=\tilde{x}, \xi\left(t\right) = \tilde {y}$,
for some $\tilde{x}, \tilde{y} \in \partial \Omega$ with $\tilde{x} - x \in Y, \tilde{y} - y \in Y$ and
\[
m^\ast\left(t, x, y\right) = \int_0^t L\left(\xi(s), \dot{\xi}(s)\right)\,ds.
\]
We claim that there exists $\displaystyle d\in \left\{0, \frac{1}{4}, \frac{1}{2}, \frac{3}{4}, \cdots, \lfloor t \rfloor -\frac{1}{4} \right\}$ such that 
\begin{equation}\label{eqn:intb}
    \int_d^{d+\frac{1}{4}} L\left(\xi(s), \dot{\xi} (s)\right) \,ds\leq \frac{C_b^2 \left(M_0+8\sqrt{n}\right)^2}{2} +K_0.
\end{equation} 
If not, 
\begin{equation}\label{eqn:cdt}
    \int_0^{\lfloor t \rfloor} L\left(\xi(s), \dot{\xi} (s)\right)\,ds \geq 4 \lfloor t \rfloor \left(\frac{C_b^2 \left(M_0+8\sqrt{n}\right)^2}{2}+K_0\right).
\end{equation}
Then, by Proposition \ref{prop:mstarbound},
\begin{equation}
\begin{aligned}
    \frac{C_b^2 \left(M_0+8\sqrt{n}\right)^2}{2} t+K_0 t &\geq \int_0^{\lfloor t \rfloor} L\left(\xi(s), \dot{\xi} (s)\right)\,ds + \int_{\lfloor t \rfloor}^t L\left(\xi(s), \dot{\xi} (s)\right)\,ds\\
    &\geq 4 \lfloor t \rfloor \left(\frac{C_b^2 \left(M_0+8\sqrt{n}\right)^2}{2}+K_0\right) -K_0,\\
\end{aligned}
\end{equation}
where the last inequality comes from \eqref{eqn:K_0L} and \eqref{eqn:cdt}.
On the other hand, since $t \geq 1$, 
\[
4 \lfloor t \rfloor \left(\frac{C_b^2 \left(M_0+8\sqrt{n}\right)^2}{2}+K_0\right) -K_0 > \frac{C_b^2 \left(M_0+8\sqrt{n}\right)^2}{2} t+K_0 t,
\]
which is a contradiction.

Let $\eta$ be an optimal path for $m^\ast \left(\tau, y, z\right)$, that is,
$\eta(0)=\tilde{y}^\prime, \, \eta(\tau) = \tilde{z}$ for some $\tilde{y}^\prime, \tilde{z} \in \partial \Omega$ with $\tilde{y}^\prime-y \in Y, \tilde{z}-z \in Y$, and
\[
m^\ast \left(\tau, y, z\right) = \int_0^\tau L\left(\eta(s), \dot{\eta}(s)\right)\,ds.
\]
Similar to the proof of Proposition \ref{prop:mstarbound}, we can find a path $\gamma:\left[0,1\right] \to \overline{\Omega}$ such that $\gamma(0)=\tilde{y}$ and $\gamma(1)=\tilde{y} ^\prime$ with
\begin{equation} \label{eqn:gammavb}
\left\|\dot{\gamma}\right\|_{L^\infty([0, 1])} \leq C_b\left(2\sqrt{n}+2\sqrt{n}\right)=4\sqrt{n}C_b,
\end{equation}
since $|\tilde{y}-\tilde{y}^\prime| \leq 2\sqrt{n}$.
Define $\zeta: [0,t+\tau] \to \overline{\Omega}$ by 
\begin{equation}
  \zeta (s) :=  \left\{\begin{aligned} &\xi(s), \qquad \qquad \qquad \qquad \qquad \quad \text{if } 0 \leq s \leq d,\\
  &\xi \left(2(s-d)+d\right), \, \, \, \, \, \quad \qquad \qquad \text{if } d \leq s \leq d+\frac{1}{8},\\
  &\xi\left(s+\frac{1}{8}\right),\qquad \qquad \qquad \qquad \text{if } d+\frac{1}{8}\leq s \leq t-\frac{1}{8},\\
  &\gamma\left(8\left(s-t+\frac{1}{8}\right)\right),\,\qquad \qquad \text{if } t-\frac{1}{8}\leq s \leq t,\\
&\eta(s-t),   \qquad \qquad \qquad \qquad \quad   \, \, \text{if } t \leq s \leq t+\tau,\\
    \end{aligned}
    \right.
\end{equation}
which is an admissible path for $m^\ast\left(t+\tau, x, z\right)$. Therefore, 
\[m^\ast\left(t+\tau, x, z\right) \leq \int_0^{t+\tau}  L\left(\zeta(s), \dot{\zeta}(s)\right)\,ds, \]
that is,
\begin{equation}\label{eqn:adp}
    \begin{aligned}
&m^\ast\left(t+\tau, x, z\right) \\\leq &\int_0^d L\left(\xi(s), \dot{\xi}(s)\right)\,ds+\int_d^{d+\frac{1}{8}} L\left(\xi \left(2(s-d)+d\right), 2\dot{\xi}\left(2(s-d)+d\right)\right)\,ds\\
    & + \int_{d+\frac{1}{8}}^{t-\frac{1}{8}} L\left(\xi\left(s+\frac{1}{8}\right), \dot{\xi}\left(s+\frac{1}{8}\right)\right)\,ds\\
    &+\int_{t-\frac{1}{8}}^{t} L\left(\gamma\left(8\left(s-t+\frac{1}{8}\right)\right), 8\dot{\gamma}\left(8\left(s-t+\frac{1}{8}\right)\right)\right)\,ds\\
    &+ \int_t^{t+\tau} L\left(\eta(s-t), \dot{\eta}(s-t)\right)\,ds.
\end{aligned}
\end{equation}
Note that
\begin{equation}\label{eqn:xi-d}
\begin{aligned}
    &\int_0^d L\left(\xi(s), \dot{\xi}(s)\right)\,ds + \int_{d+\frac{1}{8}}^{t-\frac{1}{8}} L\left(\xi\left(s+\frac{1}{8}\right), \dot{\xi}\left(s+\frac{1}{8}\right)\right)\,ds\\
    =& \int_0^d L\left(\xi(s), \dot{\xi}(s)\right)\,ds + \int_{d+\frac{1}{4}}^tL\left(\xi(s), \dot{\xi}(s)\right)\,ds\\
    =& \int_0^t L\left(\xi(s), \dot{\xi}(s)\right)\,ds - \int_d^{d+\frac{1}{4}}L \left(\xi(s), \dot{\xi}(s)\right)\,ds\\
    \leq & \, m^\ast \left(t, x, y\right) + \frac{K_0}{4},
\end{aligned}
\end{equation}
where the last inequality comes from \eqref{eqn:K_0L}. Besides, 
\begin{equation}\label{eqn:xi_d}
\begin{aligned}
    &\int_d^{d+\frac{1}{8}} L\left(\xi \left(2(s-d)+d\right), 2\dot{\xi}\left(2(s-d)+d\right)\right)\,ds \\ 
    \leq\ & \frac{1}{2}\int_d^{d+\frac{1}{4}} L\left(\xi(s), 2\dot{\xi}(s)\right)\,ds
     \leq \frac{1}{2} \int_d^{d+\frac{1}{4}} \frac{\left(2\left|\dot{\xi}(s)\right|\right)^2}{2} + K_0\,ds\\
    \leq\ &\int_d^{d+\frac{1}{4}} \left|\dot{\xi}(s)\right|^2\,ds+ \frac{K_0}{8} \,ds
     \leq  2 \int_d^{d+\frac{1}{4}} L\left(\xi(s), \dot{\xi} (s)\right) \,ds+\frac{17}{8} K_0 \,ds\\
     \leq \ & C_b^2 \left(M_0+8\sqrt{n}\right)^2 +\frac{33}{8}K_0,
\end{aligned}
\end{equation}
where the second and the fourth inequalities come from \eqref{eqn:K_0L} and the last inequality follows from \eqref{eqn:intb}.
Moreover,
\begin{equation}\label{eqn:connecteta}
    \begin{aligned}
        &\int_{t-\frac{1}{8}}^{t} L\left(\gamma\left(8\left(s-t+\frac{1}{8}\right)\right), 8\dot{\gamma}\left(8\left(s-t+\frac{1}{8}\right)\right)\right)\,ds \\
        &+ \int_t^{t+\tau} L\left(\eta(s-t), \dot{\eta}(s-t)\right)\,ds\\ 
        =\ & \frac{1}{8}\int_0^{1} L\left(\gamma(s), 8\dot{\gamma}(s)\right)\,ds + m^\ast \left(\tau, y, z\right)\\
        \leq \ &   4 \int_0^1 \left|\dot{\gamma}(s)\right|^2\,ds + \frac{K_0}{8}+ m^\ast \left(\tau, y, z\right)\\
        \leq\ & 64 n C_b^2 + \frac{K_0}{8}+m^\ast \left(\tau, y, z\right),
    \end{aligned}
\end{equation}
where the last inequality follows from \eqref{eqn:gammavb}.

\medskip

Combining \eqref{eqn:adp}, \eqref{eqn:xi-d}, \eqref{eqn:xi_d}, and \eqref{eqn:connecteta}, we have
\begin{equation}
    m^\ast\left(t+\tau, x, z\right) \leq m^\ast \left(t, x, y\right)+ m^\ast \left(\tau, y, z\right)+C_b^2\left(M_0+8 \sqrt{n}\right)^2+64 n C_b^2+\frac{9}{2}K_0.
\end{equation}

The proof for the case where $\tau \geq 1 $ is similar.
\end{proof}

For the points $x, y \in \overline{\Omega}$, the cost calculated by $m^\ast$ is fairly similar to the one computed by $m$, as demonstrated in the following proposition.

\begin{prop} \label{prp:mstarm}
Let $t \geq 1$ and $x, y \in \overline{\Omega}$ with $\left|x-y\right| \leq M_0t$. Then, there is a constant $C>0$ depending only on $n, \partial \Omega, M_0$ and $K_0$ such that
\begin{equation}
     |m^\ast(t, x, y)-m(t, x, y)|<C. 
\end{equation} 
\end{prop}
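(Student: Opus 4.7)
The plan is to establish both inequalities $m^*(t,x,y)\le m(t,x,y)+C$ and $m(t,x,y)\le m^*(t,x,y)+C$ by a splicing-and-compression construction in the same spirit as the proof of Proposition \ref{prop:mstar}.

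The preparatory ingredient is a \emph{short-connector lemma}: for every $z\in\overline{\Omega}$ there exists $\tilde z\in\partial\Omega$ with $\tilde z-z\in Y$ (by the $\mathbb{Z}^n$-periodicity of $\partial\Omega$), together with an absolutely continuous curve $\alpha:[0,1/8]\to\overline{\Omega}$ joining $z$ to $\tilde z$ and satisfying $\|\dot\alpha\|_{L^\infty}\le C$, where $C$ depends only on $n$ and $\partial\Omega$. The construction deforms the straight segment $[z,\tilde z]$ around the connected components of $\overline{\Omega}^c$ that it crosses by means of Lemma \ref{lem:pOmetric}, exactly as in the proof of Proposition \ref{prop:mstarbound}. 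In particular $\int_0^{1/8}L(\alpha,\dot\alpha)\,ds\le C_0$. A parallel construction also yields the linear upper bound $m(t,x,y)\le C_1 t$ for $x,y\in\overline\Omega$ with $|x-y|\le M_0 t$ and $t\ge 1$.

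For the inequality $m^*(t,x,y)\le m(t,x,y)+C$, let $\gamma:[0,t]\to\overline\Omega$ be an almost minimizing curve for $m(t,x,y)$ and pick $\tilde x,\tilde y\in\partial\Omega$ with $\tilde x-x,\tilde y-y\in Y$. Let $\alpha_1:[0,1/8]\to\overline\Omega$ join $\tilde x$ to $x$ and $\alpha_2:[0,1/8]\to\overline\Omega$ join $y$ to $\tilde y$. Using the linear bound $m(t,x,y)\le C_1 t$, a pigeonhole argument on disjoint half-subintervals of $[1/8,t-1/8]$ yields some $d\in[1/8,t-5/8]$ with $\int_d^{d+1/2}L(\gamma,\dot\gamma)\,ds\le C_2$. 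Define the admissible curve $\xi:[0,t]\to\overline\Omega$ from $\tilde x$ to $\tilde y$ by
\[
\xi(s)=\begin{cases}
\alpha_1(s)&s\in[0,1/8],\\
\gamma(s-1/8)&s\in[1/8,1/8+d],\\
\gamma(2(s-1/8-d)+d)&s\in[1/8+d,1/8+d+1/4],\\
\gamma(s+1/8)&s\in[1/8+d+1/4,t-1/8],\\
\alpha_2(s-(t-1/8))&s\in[t-1/8,t].
\end{cases}
\]
The total duration equals $t$. By the quadratic bounds in \eqref{eqn:K_0L}, the doubling of velocity on the bounded-cost half-window $[d,d+1/2]$ contributes only an $O(1)$ term to the cost, and the two connectors contribute a further $O(1)$. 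Consequently the cost of $\xi$ is at most $m(t,x,y)+C$, and the definition of $m^*$ gives $m^*(t,x,y)\le m(t,\tilde x,\tilde y)\le m(t,x,y)+C$.

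The reverse inequality $m(t,x,y)\le m^*(t,x,y)+C$ follows from the symmetric construction: start from an almost minimizing curve for $m^*(t,x,y)$ (which joins some $\tilde x,\tilde y\in\partial\Omega$ with $\tilde x-x,\tilde y-y\in Y$), locate a bounded-cost half-window via the same pigeonhole argument (now using the bound $m^*(t,x,y)\le Ct$ of Proposition \ref{prop:mstarbound}), and splice in short connectors from $x$ to $\tilde x$ and from $\tilde y$ to $y$. The main obstacle is the short-connector lemma uniformly in $z$: every interior point $z\in\overline\Omega$ must be joinable to a nearby boundary point $\tilde z$ by a curve in $\overline\Omega$ of length bounded only in terms of $n$ and $\partial\Omega$. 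This relies crucially on the $C^1$-regularity of $\partial\Omega$ together with its $\mathbb{Z}^n$-periodicity.
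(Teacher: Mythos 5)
Your proposal is correct and follows essentially the same route as the paper: pick $\tilde x,\tilde y\in\partial\Omega$ in the cells around $x,y$, splice in short boundary-connectors built via Lemma \ref{lem:pOmetric}, and make room in the time budget by locating a cheap sub-window (using the linear upper bound on $m$ together with a pigeonhole argument over disjoint windows in $[0,\lfloor t\rfloor]$) and compressing it, paying only an $O(1)$ price by \eqref{eqn:K_0L}. The only cosmetic differences are the window lengths and connector durations ($1/2$ and $1/8$ for you versus $1/4$ and $1/16$ in the paper), and that you isolate the ``short-connector lemma'' as a named ingredient and sketch both directions explicitly, whereas the paper writes out only $m^*\le m+C$ and asserts the other direction is similar. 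Neither changes the substance.
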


\begin{proof}
We proceed to prove
\[
m^\ast(t, x, y) \leq m(t, x, y) +C.
\]
The proof for the other direction follows similarly.

\smallskip

    Let $\xi: [0, t] \to \overline{\Omega}$ be an optimal path for $m(t, x, y)$, that is, $\xi(0)=x$, $\xi(t)=y$, and
    \[
    m(t, x, y)= \int_0^t L\left(\xi(s), \dot{\xi}(s)\right)\,ds.
    \]
    Choose $\tilde{x}, \tilde{y} \in \overline{\Omega}$ such that $\tilde{x} - x \in Y$ and  $\tilde{y} - y \in Y$. Consider the path $\alpha(s): = \displaystyle \frac{s}{t}\left( y-x\right) + x$, which is a straight line segment connecting $x$ and $y$. Therefore, $\left\|\dot{\alpha} \right\|_{L^\infty\left([0,t]\right)} \leq M_0$. We can revise $\alpha$ into a new path $\tilde{\alpha}:[0,t] \to \overline{\Omega}$ which is restricted in $\overline{\Omega}$ with $\left\|\dot{\tilde{\alpha}} \right\|_{L^\infty\left([0,t]\right)} \leq C_b M_0$, similar to the argument in the proof of Proposition \ref{prop:mstar}. Therefore,
    \[
    m(t, x, y) = \int_0^t L\left(\xi(s), \dot{\xi}(s)\right)\,ds \leq \left(\frac{C_b^2M_0^2}{2} + K_0\right)t.
    \]
    Furthermore, we can also prove that there exists $\displaystyle d\in \left\{0, \frac{1}{4}, \frac{1}{2}, \frac{3}{4}, \cdots, \lfloor t \rfloor -\frac{1}{4} \right\}$ such that 
\begin{equation}
    \int_d^{d+\frac{1}{4}} L\left(\xi(s), \dot{\xi} (s)\right) \,ds\leq \frac{C_b^2 M_0^2}{2} +K_0,
\end{equation} 
by a similar argument as in the proof of Proposition \ref{prop:mstar}. Moreover, by the same argument, we can find a path $\gamma:[0,1] \to \overline{\Omega}$ connecting $\tilde{x}$ to $x$, and a path $\eta:[0,1] \to \overline{\Omega}$ connecting $\tilde{y}$ to $y$ with $\left\|\dot{\gamma}\right\|_{L^\infty\left([0,1]\right)}, \left\|\dot{\eta}\right\|_{L^\infty\left([0,1]\right)} \leq 3\sqrt{n} C_b$. Define $\zeta: [0,t] \to \overline{\Omega}$ by 
\begin{equation}
  \zeta (s) :=  \left\{\begin{aligned} &\gamma(16 s), \, \, \qquad \qquad \qquad \qquad \quad \text{if } 0 \leq s \leq \frac{1}{16},\\
  &\xi\left(s-\frac{1}{16}\right), \, \, \, \quad \qquad \quad \qquad \quad \text{if } \frac{1}{16} \leq s \leq d + \frac{1}{16},\\
  &\xi \left(2\left(s-d-\frac{1}{16}\right)+d \right), \, \, \, \, \, \quad \text{if } d + \frac{1}{16}\leq s \leq d+\frac{3}{16},\\
  &\xi\left(s+\frac{1}{16}\right),\, \, \,  \qquad \quad \qquad \qquad \text{if } d+\frac{3}{16}\leq s \leq t-\frac{1}{16},\\
&\eta\left(16\left(s-t+\frac{1}{16}\right)\right),   \qquad \quad   \, \, \text{if } t-\frac{1}{16} \leq s \leq t,\\
    \end{aligned}
    \right.
\end{equation}
which is an admissible path for $m^\ast(t, x, y)$. Therefore,

\begin{equation}
    \begin{aligned}
     &m^\ast(t, x, y)\\
     \leq\ & \int_0^\frac{1}{16} L \left(\gamma(16s), 16\dot{\gamma}(16s)\right)\,ds + \int_\frac{1}{16}^{d+\frac{1}{16}} L \left(\tilde{\xi}\left(s-\frac{1}{16}\right), \dot{\tilde{\xi}}\left(s-\frac{1}{16}\right)\right) \,ds\\ & +\int_{d+\frac{1}{16}}^{d+\frac{3}{16}} L \left(\tilde{\xi} \left(2\left(s-d-\frac{1}{16}\right)+d \right), 2\dot{\tilde{\xi}} \left(2\left(s-d-\frac{1}{16}\right)+d \right)\right)\,ds\\
     &+\int_{d+\frac{3}{16}}^{t-\frac{1}{16}} L \left(\tilde{\xi}\left(s+\frac{1}{16}\right), \dot{\tilde{\xi}}\left(s+\frac{1}{16}\right)\right) \,ds\\ &+ \int_{t-\frac{1}{16}}^t L\left(\eta\left(16\left(s-t+\frac{1}{16}\right)\right),16 \dot{\eta}\left(16\left(s-t+\frac{1}{16}\right)\right)\right)\,ds.
    \end{aligned}.
\end{equation}
Similar to \eqref{eqn:xi-d}, \eqref{eqn:xi_d}, and \eqref{eqn:connecteta} in the proof of Proposition \ref{prop:mstar}, we have

\begin{equation}
\begin{aligned}
  &\int_\frac{1}{16}^{d+\frac{1}{16}} L \left(\tilde{\xi}\left(s-\frac{1}{16}\right), \dot{\tilde{\xi}}\left(s-\frac{1}{16}\right)\right) \,ds \\
  &+\int_{d+\frac{3}{16}}^{t-\frac{1}{16}} L \left(\tilde{\xi}\left(s+\frac{1}{16}\right), \dot{\tilde{\xi}}\left(s+\frac{1}{16}\right)\right) \,ds \\
  \leq \ & m(t, x, y) - \int_d^{d+\frac{1}{4}}L\left(\tilde{\xi}(s), \dot{\xi}(s
  )\right)\,ds
  \leq  m(t, x, y) +\frac{K_0}{4},
\end{aligned}
\end{equation}
and
\begin{equation}
\begin{aligned}
    &\int_{d+\frac{1}{16}}^{d+\frac{3}{16}} L \left(\tilde{\xi} \left(2\left(s-d-\frac{1}{16}\right)+d \right), 2\dot{\tilde{\xi}} \left(2\left(s-d-\frac{1}{16}\right)+d \right)\right)\,ds \\
    \leq \ & C_b^2M_0^2+\frac{21}{8}K_0,
\end{aligned}
\end{equation}
and
\begin{equation}
\begin{aligned}
    &\int_0^\frac{1}{16} L \left(\gamma(16s), 16\dot{\gamma}(16s)\right)\,ds\\
    &+ \int_{t-\frac{1}{16}}^t L\left(\eta\left(16\left(s-t+\frac{1}{16}\right)\right),16 \dot{\eta}\left(16\left(s-t+\frac{1}{16}\right)\right)\right)\,ds\\ 
    \leq \ & 144 nC_b^2+\frac{K_0}{8},    
\end{aligned}
\end{equation}

which gives us
\[
m^\ast(t, x, y) \leq m(t, x, y) + 144 nC_b^2 + C_b^2M_0^2 + 3K_0.
\]
\end{proof}

Next, we show that if $H$ satisfies (A5), then $L$ also satisfies a dual version of (A5).
This property of $L$ is important for our analysis in the proofs of Theorems \ref{thm:main2}--\ref{thm:main3}.

\begin{lem}\label{lem:A5}
    Assume {\rm (A3), (A5)}.
    Then, for $y\in \R^n$,
    \[
    \min_{v\in \R^n} L(y,v)=L(y,0)=0.
    \]
\end{lem}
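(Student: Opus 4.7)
The plan is to argue directly from the definition of $L$ as the Legendre transform of $H$, namely
\[
L(y,v) = \sup_{p\in\R^n}\bigl(p\cdot v - H(y,p)\bigr).
\]
Because of (A3), $H(y,\cdot)$ is convex, and together with the coercivity bound \eqref{eqn:K_0H} (which gives at least quadratic growth), this supremum is finite for every $v$ and is attained; moreover $L(y,\cdot)$ is itself convex.

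First I would evaluate $L(y,0)$. By the defining formula,
\[
L(y,0) = \sup_{p\in\R^n}\bigl(-H(y,p)\bigr) = -\inf_{p\in\R^n} H(y,p),
\]
and (A5) asserts that this infimum equals $H(y,0)=0$. Hence $L(y,0)=0$.

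Next I would show $L(y,v)\geq 0$ for every $v$. Taking the particular choice $p=0$ in the supremum defining $L(y,v)$ yields
\[
L(y,v) \geq 0\cdot v - H(y,0) = 0,
\]
again using (A5). Combining this lower bound with $L(y,0)=0$ gives both $\min_{v}L(y,v)=0$ and that this minimum is achieved at $v=0$, as desired.

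There is essentially no obstacle here: the argument is a one-line consequence of the Legendre duality between minimizers of $H$ at $p=0$ and minimizers of $L$ at $v=0$. The only thing worth noting is that the supremum in the Legendre transform is legitimately evaluated at $p=0$ (which requires no hypotheses beyond the definition), so the lower bound $L(y,v)\geq -H(y,0)=0$ is immediate; convexity and coercivity of $H$ are used only implicitly to ensure $L$ is well defined.
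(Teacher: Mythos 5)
Your argument is correct and is essentially identical to the paper's proof: both compute $L(y,0)=-\inf_p H(y,p)=0$ from (A5) and then lower-bound $L(y,v)\geq -H(y,0)=0$ by choosing $p=0$ in the supremum. The additional remarks on convexity and coercivity are fine but not needed for the statement.
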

\begin{proof}
    Fix $y\in \R^n$.
   We compute that
   \[
   L(y,0)=\sup_{p\in \R^n}(-H(y,p))= -\inf_{p\in \R^n}H(y,p)=0.
   \]
   Besides, for any $v\in \R^n$,
   \[
   L(y,v)=\sup_{p\in \R^n}(p\cdot v - H(y,p)) \geq -H(y,0)=0.
   \]
   The proof is complete.
\end{proof}

\section{Superadditivity and subadditivity of the extended cost function} \label{sec:metric}

We first prove the subadditivity of the metric $m^\ast$.

\begin{lem} \label{lem:subadd}
 For $t \geq 1$, $\left|y\right| \leq M_0t$, we have
 \[
 m^\ast\left(2t, 0, 2y\right) \leq  2 m^\ast \left(t, 0, y\right) +C.
 \]
\end{lem}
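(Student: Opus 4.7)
The plan is to combine the concatenation inequality of Proposition~\ref{prop:mstar} with the $\mathbb{Z}^n$-translation invariance of $m^\ast$, handling the fact that $y$ need not lie on the lattice through short connecting curves in $\overline{\Omega}$.

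First I would apply Proposition~\ref{prop:mstar} with $x = 0$, midpoint $y$, $z = 2y$, and $t = \tau$. The hypotheses $|0 - y| = |y| \leq M_0 t$ and $|y - 2y| = |y| \leq M_0 t$ follow from the assumption on $y$, and $t \geq 1$ is assumed, so the proposition yields
\[
m^\ast(2t, 0, 2y) \leq m^\ast(t, 0, y) + m^\ast(t, y, 2y) + C_1.
\]
It therefore suffices to establish $m^\ast(t, y, 2y) \leq m^\ast(t, 0, y) + C_2$ for a constant $C_2 = C_2(n, \partial\Omega, M_0, K_0)$ independent of $t$ and $y$.

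Next, I would use the integer-translation invariance $m^\ast(t, a + z, b + z) = m^\ast(t, a, b)$ for all $z \in \mathbb{Z}^n$, which is an immediate consequence of the $\mathbb{Z}^n$-periodicity of $\Omega$ (hence of $\partial \Omega$) and of the Lagrangian $L$ inherited from (A1). Pick $z \in \mathbb{Z}^n$ closest to $y$ and set $r := y - z$, so $|r|_\infty \leq 1/2$ and $r \in Y$; then
\[
m^\ast(t, y, 2y) = m^\ast(t, r, y + r),
\]
and the endpoints of the translated problem differ from those of $m^\ast(t, 0, y)$ by the bounded vector $r$ on each side.

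Finally, I would prove $m^\ast(t, r, y + r) \leq m^\ast(t, 0, y) + C_2$ by modifying a near-optimal path. Let $\xi:[0, t] \to \overline{\Omega}$ realize $m^\ast(t, 0, y)$ up to an error $\varepsilon$, with endpoints $\tilde{x}_0 \in \partial \Omega \cap Y$ and $\tilde{y}_0 \in \partial \Omega \cap (y + Y)$. Pick boundary points $\tilde{p} \in \partial \Omega \cap (r + Y)$ and $\tilde{q} \in \partial \Omega \cap (y + r + Y)$, which exist by periodicity and satisfy $|\tilde{p} - \tilde{x}_0|_\infty,\ |\tilde{q} - \tilde{y}_0|_\infty \leq 3/2$. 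Using the local-geometry argument from the proof of Proposition~\ref{prop:mstarbound} (which rests on Lemma~\ref{lem:pOmetric}), construct short Lipschitz curves $\gamma_1, \gamma_2 \subset \overline{\Omega}$ of length bounded in terms of $n$ and $\partial \Omega$ joining $\tilde{p}$ to $\tilde{x}_0$ and $\tilde{y}_0$ to $\tilde{q}$ respectively. Concatenating $\gamma_1$, $\xi$, $\gamma_2$ produces a path of time slightly larger than $t$; to fit it into exactly $t$, I would invoke the time-squeezing device from the proof of Proposition~\ref{prop:mstar}: by the pigeonhole argument against the upper bound from Proposition~\ref{prop:mstarbound}, there exist two disjoint subintervals of $[0, \lfloor t \rfloor]$ of length $1/4$ on which $\xi$ accumulates at most a universal amount of action, and traversing each at double speed frees $1/8$ of time apiece, enough to absorb $\gamma_1$ and $\gamma_2$. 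Doubling the speed only quadruples the action on a bounded-action subinterval, while the connectors contribute $O(1)$ action via \eqref{eqn:K_0L} thanks to their bounded velocities. The resulting path is an admissible candidate for $m^\ast(t, r, y + r)$ with action at most $m^\ast(t, 0, y) + C_2 + \varepsilon$, and sending $\varepsilon \to 0$ closes the argument.

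The principal difficulty is the bookkeeping in the time-squeezing step, but since it precisely parallels the corresponding calculation in Proposition~\ref{prop:mstar}, no conceptually new ingredient beyond the integer shift is required.
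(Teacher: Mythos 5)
Your plan is correct and follows the same skeleton as the paper's proof: both first invoke Proposition~\ref{prop:mstar} with midpoint $y$ to reduce to showing $m^\ast(t,y,2y)\le m^\ast(t,0,y)+C$, and both then handle the non-lattice offset of $y$ by a $\Z^n$-shift combined with the time-squeezing device. The paper's second step differs from yours in one efficiency detail: rather than translating the metric to $m^\ast(t,r,y+r)$ and then re-attaching both endpoints (which requires two connectors, hence either two pigeonhole subintervals or one squeeze split in half), the paper shifts the near-optimal path $\xi$ by a single $k\in\Z^n$ chosen so that $\xi(0)+k$ already lies in $y+Y$ and is therefore an admissible starting point for $m^\ast(t,y,2y)$. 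Only the terminal endpoint $\tilde y+k$ then needs a connector to some $\tilde z\in\partial\Omega\cap(2y+Y)$, so a single squeezing interval $[d,d+1/4]$ compressed to length $1/8$ suffices to absorb it. Your version works too (and the two-interval pigeonhole goes through for $t\ge1$ once constants are tuned, or you can split one freed $1/8$ as $1/16+1/16$ exactly as the paper's Proposition~\ref{prp:mstarm} does), but the single asymmetric shift is cleaner. Either way, the key ingredients --- integer-translation invariance of $m$ and $m^\ast$, the pigeonhole bound on a short subinterval of the optimal path, and the $C_b$-deflection of short connecting segments into $\overline\Omega$ --- are the same, so this is a correct minor variant.
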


\begin{proof}
By Proposition \ref{prop:mstar}, we know
\[
m^\ast\left(2t, 0, 2y\right) \leq m^\ast \left(t, 0, y\right) + m^\ast \left(t, y, 2y \right)+C
\]
for some constant $C=C\left(n, \partial \Omega, M_0, K_0\right)$. It suffices to prove that
\[
m^\ast\left(t, y, 2y\right) \leq m^\ast \left(t, 0, y\right) +C,
\]
for some constant $C=C(n, \partial \Omega, M_0, K_0)$. Suppose $\xi:[0, t] \to \overline{\Omega}$ is an optimal path for $m^\ast (t, 0, y)$, that is, $\xi(0)=\tilde{x}$, $\xi(t)=\tilde{y}$ for some $\tilde{x}, \tilde{y} \in \partial \Omega$ with $\tilde{x}-0\in Y$ and $\tilde{y}-y \in Y$. Similarly to the proof of Proposition \ref{prop:mstar}, we know that there exists $\displaystyle d\in \left\{0, \frac{1}{4}, \frac{1}{2}, \frac{3}{4}, \cdots, \lfloor t \rfloor -\frac{1}{4} \right\}$ such that 
\begin{equation}
    \int_d^{d+\frac{1}{4}} L\left(\xi(s), \dot{\xi} (s)\right) \,ds\leq \frac{C_b^2 \left(M_0+8\sqrt{n}\right)^2}{2} +K_0.
\end{equation} 
Choose $k \in \mathbb{Z}^n$ such that $\tilde{x}+k \in y+Y$ and define $\tilde{\xi}(s): = \xi (s)+ k$ for $s \in [0,t]$. Then,
\[
\int_d^{d+\frac{1}{4}} L\left(\tilde{\xi}(s), \dot{\tilde{\xi}} (s)\right) \,ds=\int_d^{d+\frac{1}{4}} L\left(\xi(s), \dot{\xi} (s)\right) \,ds\leq \frac{C_b^2 \left(M_0+8\sqrt{n}\right)^2}{2} +K_0.
\]
Choose $\tilde{z} \in \partial \Omega$ such that $\tilde{z}-2y \in Y$. From Proposition \ref{prop:mstarbound}, we can find a path $\gamma:\left[0,1\right] \to \overline{\Omega}$ such that $\gamma(0)=\tilde{y}+k$ and $\gamma(1)=\tilde{z}$ with

\begin{equation}
\left\|\dot{\gamma}\right\|_{L^\infty([0, 1])} \leq C_b\left(4\sqrt{n}+2\sqrt{n}\right)=6 \sqrt{n} C_b,
\end{equation}
since $|\tilde{y}+k-\tilde{z}| \leq |\tilde{y}+k-2y| + |\tilde{z} -2y| \leq 3\sqrt{n} + \sqrt{n}=4\sqrt{n}$.

\smallskip

Define $\zeta:[0, t] \to \overline{\Omega}$ by 
\begin{equation}
  \zeta (s) :=  \left\{\begin{aligned} &\tilde{\xi}(s), \qquad \qquad \qquad \qquad \qquad \quad \text{if } 0 \leq s \leq d,\\
  &\tilde{\xi} \left(2(s-d)+d\right), \, \, \, \, \, \quad \qquad \qquad \text{if } d \leq s \leq d+\frac{1}{8},\\
  &\tilde{\xi}\left(s+\frac{1}{8}\right),\qquad \qquad \qquad \qquad \text{if } d+\frac{1}{8}\leq s \leq t-\frac{1}{8},\\
  &\gamma\left(8\left(s-t+\frac{1}{8}\right)\right),\,\qquad \qquad \text{if } t-\frac{1}{8}\leq s \leq t,\\
    \end{aligned}
    \right.
\end{equation}
which is an admissible path for $m^\ast\left(t, y, 2y\right)$. Then, $m^\ast\left(t, y, 2y\right) \leq \int_0^t L\left(\zeta(s), \dot{\zeta}(s)\right)\,ds$, that is,
\begin{equation}
    \begin{aligned}
&m^\ast\left(t, y, 2y\right) \\
\leq &\int_0^d L\left(\tilde{\xi}(s), \dot{\tilde{\xi}}(s)\right)\,ds+\int_d^{d+\frac{1}{8}} L\left(\tilde{\xi} \left(2(s-d)+d\right), 2\dot{\tilde{\xi}}\left(2(s-d)+d\right)\right)\,ds\\
    & + \int_{d+\frac{1}{8}}^{t-\frac{1}{8}} L\left(\tilde{\xi}\left(s+\frac{1}{8}\right), \dot{\tilde{\xi}}\left(s+\frac{1}{8}\right)\right)\,ds\\
    &+\int_{t-\frac{1}{8}}^{t} L\left(\gamma\left(8\left(s-t+\frac{1}{8}\right)\right), 8\dot{\gamma}\left(8\left(s-t+\frac{1}{8}\right)\right)\right)\,ds\\
    = &\int_0^d L\left(\xi(s), \dot{\xi}(s)\right)\,ds+\int_d^{d+\frac{1}{8}} L\left(\xi \left(2(s-d)+d\right), 2\dot{\xi}\left(2(s-d)+d\right)\right)\,ds\\
    & + \int_{d+\frac{1}{8}}^{t-\frac{1}{8}} L\left(\xi\left(s+\frac{1}{8}\right), \dot{\xi}\left(s+\frac{1}{8}\right)\right)\,ds\\
    &+\int_{t-\frac{1}{8}}^{t} L\left(\gamma\left(8\left(s-t+\frac{1}{8}\right)\right), 8\dot{\gamma}\left(8\left(s-t+\frac{1}{8}\right)\right)\right)\,ds.
\end{aligned}
\end{equation}
Similar to \eqref{eqn:xi-d}, \eqref{eqn:xi_d}, and \eqref{eqn:connecteta} in the proof of Proposition \ref{prop:mstar}, we have
\begin{equation}
\int_0^d L\left(\xi(s), \dot{\xi}(s)\right)\,ds + \int_{d+\frac{1}{8}}^{t-\frac{1}{8}} L\left(\xi\left(s+\frac{1}{8}\right), \dot{\xi}\left(s+\frac{1}{8}\right)\right)\,ds
    \leq  \, m^\ast \left(t, 0, y\right) + \frac{K_0}{4},
\end{equation}

\begin{equation}
    \int_d^{d+\frac{1}{8}} L\left(\xi \left(2(s-d)+d\right), 2\dot{\xi}\left(2(s-d)+d\right)\right)\,ds \leq  C_b^2 \left(M_0+8\sqrt{n}\right)^2 +\frac{33}{8}K_0,
\end{equation}
and
\begin{equation}
    \begin{aligned}
        &\int_{t-\frac{1}{8}}^{t} L\left(\gamma\left(8\left(s-t+\frac{1}{8}\right)\right), 8\dot{\gamma}\left(8\left(s-t+\frac{1}{8}\right)\right)\right)\,ds \\ 
        =\ &\frac{1}{8}\int_0^{1} L\left(\gamma(s), 8\dot{\gamma}(s)\right)\,ds \\
        \leq \ &  4 \int_0^1 \left|\dot{\gamma}(s)\right|^2\,ds + \frac{K_0}{8}
        \leq  144 n C_b^2 + \frac{K_0}{8},
    \end{aligned}
\end{equation}
which gives
\[
m^\ast(t, y, 2y) \leq m^\ast(t, 0, y)+ 144 n C_b^2 +C_b^2 \left(M_0+8\sqrt{n}\right)^2 +\frac{9}{2}K_0.
\]
\end{proof}
Next, we show the superadditivity of the metric $m^\ast$.
\begin{lem}\label{lem:superadd}
    For $t \geq 1$, $|y| \leq M_0t$, we have
\begin{equation}\label{eqn:superadd}
    2m^\ast\left(t, 0, y\right) \leq m^\ast \left(2t, 0, 2y\right) +C.
\end{equation}
\end{lem}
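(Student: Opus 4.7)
The plan is to mirror the argument of Lemma \ref{lem:subadd}, but in reverse: starting with an optimal path for $m^\ast(2t, 0, 2y)$, split it near the time midpoint into two pieces and use each (after a lattice shift and short patching) as an admissible competitor for $m^\ast(t, 0, y)$ and $m^\ast(t, y, 2y)$, respectively. Lemma \ref{lem:subadd} itself is symmetric in the sign of the lattice shift, and the analogous argument with the opposite sign yields $m^\ast(t, 0, y) \leq m^\ast(t, y, 2y) + C$; combining this near-symmetry with the main estimate below produces \eqref{eqn:superadd}.

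Concretely, I would take an optimal path $\xi:[0, 2t] \to \ol\Om$ realizing $m^\ast(2t, 0, 2y)$, with $\xi(0) = \tilde x \in \pl\Om \cap Y$ and $\xi(2t) = \tilde z \in \pl\Om \cap (2y + Y)$. Using the averaging argument from the proofs of Proposition \ref{prop:mstar} and Lemma \ref{lem:subadd}, together with the upper bound on the total cost of $\xi$ from Proposition \ref{prop:mstarbound}, I would locate a time $d$ near $t$ such that
\[
\int_d^{d + 1/4} L(\xi(s), \dot\xi(s))\, ds \leq C_1
\]
for a fixed constant $C_1$. Writing $w = \xi(d)$ and $w' = \xi(d + \tfrac{1}{4})$, cutting $\xi$ at $d$ and $d + \tfrac{1}{4}$ produces pieces $\xi|_{[0, d]}$ and $\xi|_{[d + 1/4, 2t]}$ whose joint cost is at most $m^\ast(2t, 0, 2y) + K_0/4$ by \eqref{eqn:K_0L}. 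I would then pick lattice vectors $k, k' \in \Z^n$ so that $w + k \in y + Y$ and $w' + k' \in y + Y$, shift the two pieces by $+k$ and $+k'$, and attach short bridges at each end by means of Lemma \ref{lem:pOmetric} (as in the proofs of Propositions \ref{prop:mstarbound} and \ref{prop:mstar}), so that the first piece becomes an admissible path from $\pl\Om \cap Y$ to $\pl\Om \cap (y + Y)$ in time $t$, and the second an admissible path from $\pl\Om \cap (y + Y)$ to $\pl\Om \cap (2y + Y)$ in time $t$. Summing their costs yields $m^\ast(t, 0, y) + m^\ast(t, y, 2y) \leq m^\ast(2t, 0, 2y) + C_2$, which combined with the near-symmetry noted above delivers \eqref{eqn:superadd}.

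The hardest step will be keeping the cost of the short bridges bounded by a constant independent of $t$ and $|y|$. If $w = \xi(d)$ happens to be far from $y$ in $\T^n$, then the lattice vector $k$ needed to translate $w$ into $y + Y$ can be large, and a naive bridge patching the resulting discrepancy at the start of the shifted first piece would carry cost growing with $|k|$. To circumvent this, I would choose $d$ dynamically: not as a fixed time near $t$, but as a time for which simultaneously the local integrability condition on $L(\xi, \dot\xi)$ holds and $\xi(d)$ is close to $y$ modulo $\Z^n$. Existence of such a $d$ within bounded distance of $t$ is expected to follow from the $L^\infty$ velocity bound on optimal paths of $m^\ast$ (in the spirit of Lemma \ref{lem:velocity bound}), the periodicity of $\Om$ and $L$, and a careful averaging argument over a bounded window of candidate times. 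Once such a $d$ is secured, the shifts $k, k'$ are bounded and the bridges have uniformly bounded cost, closing the estimate.
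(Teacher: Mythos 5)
Your plan has a genuine gap, and the fix you propose does not close it. After shifting the first piece $\xi|_{[0,d]}$ by a lattice vector $k$ chosen so that $\xi(d)+k\in y+Y$, the starting point becomes $\tilde x + k$, where necessarily $k\approx y-\xi(d)$. For this to be an admissible competitor for $m^\ast(t,0,y)$ you also need the start in $\partial\Omega\cap Y$ (up to a cheaply patchable gap), i.e.\ $\tilde x + k$ within bounded distance of $Y$, which forces $\xi(d)\approx y$ in $\R^n$, not merely modulo $\Z^n$. Closeness modulo $\Z^n$ is automatic (any point is within $\sqrt n/2$ of $y+\Z^n$) and carries no information; the real requirement is that the optimal path visit the Euclidean neighborhood of $y$ at some time, and there is simply no reason it should: the optimal $\xi$ for $m^\ast(2t,0,2y)$ can rush to a neighborhood of $2y$ early and dawdle there, never coming near $y$ at all. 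So no choice of cut time $d$ makes both shifted halves land in the right target cells, and the patch cost cannot be bounded uniformly.

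The paper's proof resolves exactly this difficulty by not cutting at a single time. It applies Burago's lemma to the space--time graph $\gamma(s)=(\xi(s),s)$ on $[0,2t]$: this produces at most $n/2+1$ disjoint subintervals $\{[a_i,b_i]\}$ whose displacements sum to exactly $\left(\frac{\tilde z-\tilde x}{2},\,t\right)$. Because the number of pieces is bounded by a dimensional constant and the shifts chaining them together differ only by bounded lattice vectors, the reassembled path runs in time exactly $t$ from near $0$ to near $y$, and all the connecting bridges have uniformly bounded cost. This combinatorial decomposition of the optimal path is the essential idea you are missing; without it, the superadditivity bound cannot be obtained by local surgery at a single cut, no matter how cleverly the cut time is chosen. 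You should replace the single-cut strategy with the Burago decomposition as in the paper (and as in the whole-space argument of \cite{TY2022}, which the present lemma adapts).

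Your observation that $m^\ast(t,0,y)\leq m^\ast(t,y,2y)+C$ follows by an argument symmetric to Lemma \ref{lem:subadd} is correct and is also used in the paper; the issue is solely in the main splitting step.
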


\begin{proof}
    Let $\xi:[0,2t] \to \overline{\Omega} 
    $ be an optimal path of $m^\ast\left(2t, 0, 2y\right)$, that is, $\xi(0)=\tilde{x}$, $\xi(2t)=\tilde{z}$ for some $\tilde{x}, \tilde{z} \in \partial \Omega$ with $\tilde{x}-0\in Y$ and $\tilde{z}-2y \in Y$. Define $\gamma (s): = \left(\xi(s), s \right)$ for $0 \leq s \leq 2t$. By Burago's lemma (\cite{Burago}, see also \cite{TY2022}), there exists a collection of disjoint time intervals $\left\{[a_i, b_i] \right\}_{1 \leq i \leq k} \subset [0, 2t]$ with $k\leq \frac{n}{2}+1$ such that
\[
\sum_{i=1}^k \left(\gamma\left(b_i\right)-\gamma\left(a_i\right)\right)=\frac{\gamma(2t)-\gamma(0)}{2}=\left(\frac{\tilde{z}-\tilde{x}}{2}, t\right).
\]
Shift $\xi$ on $\left\{[a_i, b_i]\right\}_{i=1}^k$ in a periodic way to define a new path $\tilde{\xi}: [0 , t] \to \overline{\Omega}$ so that
\begin{enumerate}
    \item $t_0:=0$, $t_j:= \sum_{i=1}^j \left(b_i-a_i\right)$ for $1 \leq j \leq k$;
    \item $\tilde{\xi}(0^+) \in Y \cap \overline{\Omega}$;

    \item $\tilde{\xi} \big|_{\left(t_{j-1}, t_j\right)}$ is a periodic shift of $\xi \big|_{\left(t_{j-1}, t_j\right)}$ for $1 \leq j \leq k$;

    \item Define $\tilde{\xi}\left(0^-\right):=0$ and $\tilde{\xi}\left(t_k^+\right):=\frac{\tilde{z}-\tilde{x}}{2}$. Choose $\tilde{y} \in \partial \Omega$ such that $\tilde{y}-y \in Y$. Define $\tilde{\xi}(t_{k+1}) : = \tilde{y}$. Note that 
\begin{equation}\label{eqn:k+1kbound}
    \left|\tilde{\xi}(t_{k+1})-\tilde{\xi}\left(t_k^+\right)\right| \leq 2 \sqrt{n}. 
\end{equation}

    \item  Shift $\xi \big|_{\left(t_{j-1}, t_j\right)}$ so that $1 \leq j \leq k-1$, $\tilde{\xi}\left(t_j^+\right) - \tilde{\xi} \left(t_j^-\right) \in Y$, which gives that
\end{enumerate}
\begin{equation}\label{eqn:1k-1bound}
    \left|\tilde{\xi}\left(t_j^+\right) - \tilde{\xi} \left(t_j^-\right)\right| \leq \sqrt{n}.   
\end{equation}
Note that 
\begin{equation}\label{eqn:k+-bound}
    \left|\tilde{\xi}\left(t_k^+\right) - \tilde{\xi}\left(t_k^-\right)\right|\leq k\sqrt{n}.
\end{equation}
A key point here is $\tilde{\xi}([0,t]) \subset \overline{\Omega}$ as $\overline{\Omega}$ is $\Z^N$-periodic.
In other words, periodic shifts of pieces of $\xi$ still stay in $\ol \Omega$ and hence are admissible.
Moreover, we have 
\[
\sum_{j=1}^k\left(\tilde{\xi}\left(t_j^-\right) - \tilde{\xi} \left(t_j^+\right)\right)=\frac{\tilde{z}-\tilde{x}}{2}.
\]
\begin{figure}
  \centering
  \subfloat{
        \begin{tikzpicture}[scale=1.35]
        \foreach \x in {0.5,1.5,...,3.5}
        \foreach \y in {0.5,1.5,...,2.5}
            \filldraw[fill=purple!60,draw=white] (\x,\y) ellipse (0.2 cm and 0.3 cm);
        \foreach \x in {0.5,1.5,...,3.5}
        \foreach \y in {0.5,1.5,...,2.5}
            \draw[thick, dotted] (\x-0.5,\y-0.5) rectangle (\x+0.5,\y+0.5);
        \draw [blue, ultra thick] plot [smooth] coordinates { (0,0) (0.7,0.1) (1,1) (1,1.5)};
        \draw [blue, ultra thick] (2,3)--(3,3);
        \draw plot[smooth] coordinates {(1,1.5) (1.1,2) (0.9,2.5) (2,3)};
        \draw plot[smooth] coordinates {(3,3) (3.1, 2.1) (3.9, 2.2) (4,3)};
        \draw (0,-0.25) node{$\xi(0)$} (4.2,3.25) node{$\xi(2t)$};
        \end{tikzpicture}
  }
  \subfloat{
        \begin{tikzpicture}[scale=1.35]
        \foreach \x in {0.5,1.5,...,3.5}
        \foreach \y in {0.5,1.5,...,2.5}
            \filldraw[fill=purple!60,draw=white] (\x,\y) ellipse (0.2 cm and 0.3 cm);
        \foreach \x in {0.5,1.5,...,3.5}
        \foreach \y in {0.5,1.5,...,2.5}
            \draw[thick, dotted] (\x-0.5,\y-0.5) rectangle (\x+0.5,\y+0.5);
        \draw [blue, ultra thick] plot [smooth] coordinates { (0,0) (0.7,0.1) (1,1) (1,1.5)};
        \draw [blue, ultra thick] (1,2)--(2,2);
        \draw [->, red, thick] (1,1.5)--(1,2);
        \draw [->, red, thick] (2,2)--(2,1.5);
        \draw (0,-0.25) node{$\zeta(0)$} (2.25,1.1) node{$\zeta(t)$};
        \end{tikzpicture}
  }
\captionof{figure}{An example of $\xi$ and $\zeta$}\label{fig:xi}
\end{figure}
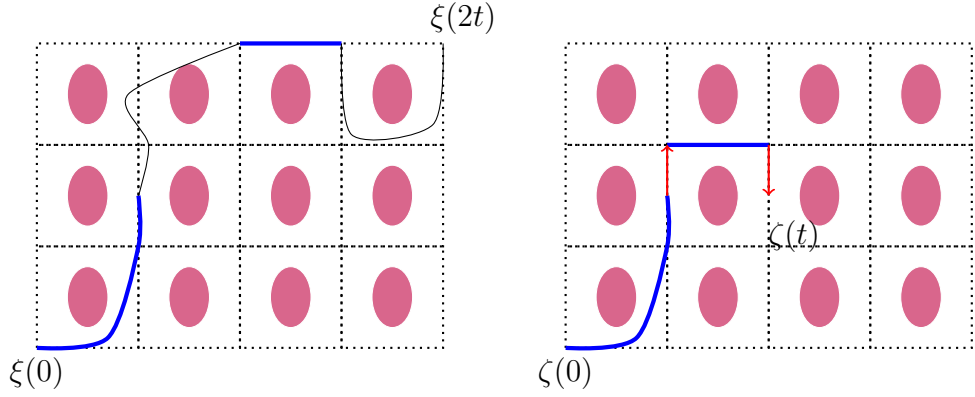
Now we define another curve $\zeta : [0, t] \to \overline{\Omega}$ from $\tilde{\xi}$ such that $\zeta$ is an admissible path for $m^\ast \left(t, 0, y\right)$. 
See Figure \ref{fig:xi}.
From Proposition \ref{prop:mstarbound}, we know

\begin{equation}
    m^\ast\left(2y, 0, 2y\right) = \int_0^{2t} L\left(\xi(s), \dot{\xi}(s)\right)\,ds \leq \left( \frac{C_b^2 \left(M_0 + \sqrt{n}\right)^2}{2}+K_0\right)2t 
\end{equation}
and by using a similar reasoning as that in Proposition \ref{prop:mstar},  there exists $\displaystyle d\in \left\{0, \frac{1}{4}, \frac{1}{2}, \frac{3}{4}, \cdots, \lfloor t \rfloor -\frac{1}{4} \right\}$ such that 
\begin{equation}\label{eqn:xitildebound}
    \int_d^{d+\frac{1}{4}} L\left(\tilde{\xi}(s), \dot{\tilde{\xi}} (s)\right) \,ds\leq C_b^2 \left(M_0+\sqrt{n}\right)^2 + 2K_0.
\end{equation}
Define $\zeta : \left[0, t-\frac{1}{8}\right] \to \mathbb{R}^n$ by
\begin{equation}
  \zeta (s) :=  \left\{\begin{aligned} &\tilde{\xi}(s), \qquad \qquad \qquad \qquad \qquad \quad \text{if } 0 \leq s \leq d,\\
  &\tilde{\xi} \left(2(s-d)+d\right), \, \, \, \, \, \quad \qquad \qquad \text{if } d \leq s \leq d+\frac{1}{8},\\
  &\tilde{\xi}\left(s+\frac{1}{8}\right),\qquad \qquad \qquad \qquad \text{if } d+\frac{1}{8}\leq s \leq t-\frac{1}{8}.\\
    \end{aligned}
    \right.
\end{equation}

Therefore, by \eqref{eqn:K_0L} and \eqref{eqn:xitildebound},
\begin{equation}
\begin{aligned}
    &\left|\int_0^t L\left(\tilde{\xi}(s), \dot{\tilde{\xi}} (s)\right) \,ds - \int_0^{t-\frac{1}{8}} L \left( \zeta(s), \dot{\zeta}(s) \right)\,ds\right|\\
\leq \ & \left|\int_d^{d+\frac{1}{8}} L \left( \tilde{\xi}(2(s-d)+d), \dot{\tilde{\xi}}(2(s-d)+d) \right)\,ds\right| + \left| \int_d^{d+\frac{1}{4}} L\left(\tilde{\xi}(s), \dot{\tilde{\xi}} (s)\right) \,ds\right|\\
\leq \ & \left|\int_d^{d+\frac{1}{4}} L \left( \tilde{\xi}(s), 2\dot{\tilde{\xi}} (s)\right)\,ds\right|+ C_b^2 \left(M_0+\sqrt{n}\right)^2 + \frac{5}{2}K_0\\
\leq\ &\int_d^{d+\frac{1}{4}} \frac{\left|2 \dot{\tilde{\xi}} (s)\right|^2}{2}\,ds+\frac{K_0}{4}+ C_b^2 \left(M_0+\sqrt{n}\right)^2 + \frac{5}{2}K_0\\
\leq\ & 4\int_d^{d+\frac{1}{4}} L\left(\tilde{\xi}(s), \dot{\tilde{\xi}} (s)\right) \,ds+C_b^2 \left(M_0+\sqrt{n}\right)^2 + \frac{27}{4}K_0\\
\leq \ & 5C_b^2 \left(M_0+\sqrt{n}\right)^2 + \frac{59}{4}K_0.
\end{aligned}
\end{equation}
We now create $k+2$ paths connecting $\tilde{\xi}\left(t_j^+\right)$ and $\tilde{\xi} \left(t_j^-\right)$ for $0 \leq j \leq k$, and $\tilde{\xi}\left(t_{k+1}\right)$ and $\tilde{\xi} \left(t_k^+\right)$, each of which takes time $\frac{1}{8(k+2)}$. Similar to the proof of Proposition \ref{prop:mstarbound}, we can find such paths with a velocity bound $8k(k+2)C_b \sqrt{n}$ according to \eqref{eqn:k+1kbound}, \eqref{eqn:1k-1bound}, and \eqref{eqn:k+-bound}. Then we glue these $k+2$ paths to $\zeta$ and get a new admissible path $\tilde{\zeta}$ for $m^\ast\left(t, 0, y\right)$. Then, 
\begin{equation}
    m^\ast\left(t, 0, y\right) \leq \int_0^t L\left(\tilde{\zeta}(s), \dot{\tilde{\zeta}}(s)\right)\,ds\leq \int_0^t L\left( \tilde{\xi}(s), \tilde{\xi}(s)\right)\,ds +C
\end{equation}
for some constant $C$ that is independent of $t, y$. We can show a similar result for $m^\ast\left(t, y, 2y\right)$ and hence
\begin{equation}\label{eqn:split}
   m^\ast\left(t, 0, y\right)+m^\ast\left(t, y, 2y\right) \leq m^\ast\left(2t, 0, 2y\right) + C
\end{equation}
for some constant $C$ that is independent of $t, y$.
Moreover, by a similar argument as in the proof of Lemma \ref{lem:subadd}, we have 
\begin{equation}\label{eqn:movearound}
    m^\ast(t, 0, y) \leq m^\ast(t, y, 2y)+C
\end{equation}
for some constant $C$ that is independent of $t, y$. Combine \eqref{eqn:split} and \eqref{eqn:movearound}, we have
\[
 2m^\ast\left(t, 0, y\right) \leq m^\ast \left(2t, 0, 2y\right) +C.
\]
\end{proof}

\section{Proof of Theorem \ref{thm:main1}}\label{sec:main1}
We proved the subadditivity and the superadditivity of the metric function $m^\ast$ in Section 3. Hence, the following proposition is a quick application of Fekete's lemma and we omit the proof here. (See  \cite[Lemma D.1]{TYWKAM} for a proof.)
\begin{prop}\label{prop:mbar-qual}
    For any $t >0 $ and $x, y \in \mathbb{R}^n$ with $|x-y| \leq M_0t$, the following limit  exists
    \[
    \displaystyle \overline{m}^\ast (t, x, y): = \lim_{k\to \infty} \frac{1}{k} m^\ast \left(kt, kx, ky\right).
    \]
\end{prop}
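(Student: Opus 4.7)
The plan is a standard almost-subadditive Fekete-lemma argument applied to the integer sequence $f(k) := m^\ast(kt, ka, kb)$ for $k$ large enough that $kt \geq 1$, so that Proposition~\ref{prop:mstar} and Lemmas~\ref{lem:subadd}--\ref{lem:superadd} apply.

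First, I would derive an almost-subadditivity $f(j+k) \leq f(j) + f(k) + C$ for all such $j,k$. Invoking Proposition~\ref{prop:mstar} with the intermediate point $y = ka + jb$, which lies on the segment from $(j+k)a$ to $(j+k)b$ and satisfies $|(j+k)a - y| = j|a-b| \leq M_0 \, jt$ and $|y - (j+k)b| = k|a-b| \leq M_0 \, kt$, yields
\[
f(j+k) \leq m^\ast(jt, (j+k)a, ka + jb) + m^\ast(kt, ka + jb, (j+k)b) + C.
\]
To relate the first right-hand-side term to $f(j)$, I use the $\Z^n$-invariance of $m^\ast$ (a consequence of the $\Z^n$-periodicity of $\Omega$ and $H$) to shift both of its endpoints by a lattice vector close to $ka$, reducing it to $m^\ast(jt, ja + r_k, jb + r_k)$ for some remainder $r_k \in Y$. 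A path-gluing argument as in Proposition~\ref{prop:mstarbound}, using the bounded diameter of $Y$ and the $\Z^n$-periodicity of $\partial\Omega$, absorbs the effect of this remainder at $O(1)$ cost, giving $m^\ast(jt, (j+k)a, ka+jb) \leq f(j) + C$. The second term is handled analogously, with the role of $ka$ replaced by $jb$.

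Second, an entirely parallel argument based on Lemma~\ref{lem:superadd} produces an almost-superadditivity $f(j+k) \geq f(j) + f(k) - C$, which by iteration yields a linear lower bound $f(k) \geq -C'k$. Defining $\tilde f(k) := f(k) + C$, the almost-subadditivity becomes honest subadditivity $\tilde f(j+k) \leq \tilde f(j) + \tilde f(k)$, so Fekete's lemma gives $\tilde f(k)/k \to \inf_k \tilde f(k)/k$, a limit which is finite by the lower bound. Since $C/k \to 0$, the sequence $f(k)/k$ converges to the same limit, which is $\overline m^\ast(t, a, b)$.

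I expect the main technical obstacle to be the $Y$-shift absorption used in the first step: one must show uniformly in $j$ and $k$ that $|m^\ast(s, x+r, y+r) - m^\ast(s, x, y)| \leq C$ for all $r \in Y$ and $s \geq 1$. While $r$ has bounded norm, the admissible boundary neighborhoods $r+Y$ and $Y$ overlap only partially, so the two infima are taken over slightly different sets. Handling this requires the $C^1$ regularity and local connectivity of $\partial\Omega$ together with the boundary-path construction from the proof of Proposition~\ref{prop:mstarbound} to connect nearby admissible endpoints at bounded Lagrangian cost.
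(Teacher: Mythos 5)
Your proposal is correct and follows the same route the paper has in mind: establish almost-subadditivity of $k\mapsto m^\ast(kt,ka,kb)$, get a linear lower bound, and invoke Fekete's lemma. Two remarks are worth making.

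First, you are right to work directly from Proposition~\ref{prop:mstar} rather than from Lemmas~\ref{lem:subadd}--\ref{lem:superadd}, even though the paper points to the latter. Those lemmas are \emph{doubling} inequalities ($k\to 2k$), and by themselves they only control the dyadic subsequence $m^\ast(2^j t,2^j a,2^j b)/2^j$; to get the limit over all integers $k$ one needs the general $(j,k)$-version of subadditivity, which is exactly what your application of Proposition~\ref{prop:mstar} with intermediate point $y=ka+jb$ produces (after the $\Z^n$-shift and $Y$-absorption). The $Y$-shift absorption you flag as the technical obstacle is not a gap: it is handled by precisely the time-compression and path-gluing machinery that already appears in the proofs of Propositions~\ref{prop:mstarbound}--\ref{prop:mstar} and Lemma~\ref{lem:subadd} (find a subinterval $[d,d+\tfrac14]$ of bounded cost, reparametrize to free up a small time window, and insert a boundary bridge of bounded Lagrangian cost). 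One should only be careful that Proposition~\ref{prop:mstar} requires $jt\geq 1$ or $kt\geq 1$, so the subadditivity holds for $j,k$ sufficiently large; the standard Fekete argument adapts to this with a fixed base block.

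Second, the entire superadditivity step is unnecessary. The lower bound $L(y,v)\geq -K_0$ from \eqref{eqn:K_0L} gives directly $m^\ast(kt,ka,kb)\geq -K_0\, kt$, i.e.\ the linear lower bound needed for Fekete's lemma is trivial. Re-deriving a general $(j,k)$-superadditivity via Burago's lemma (which Lemma~\ref{lem:superadd} only establishes for the doubling case $j=k$, and whose extension to unequal splits requires a further argument) is genuine extra work with no payoff for this proposition; superadditivity is needed later, in the proof of Theorem~\ref{thm:mbarmstarrate}, to produce the one-sided $O(\varepsilon)$ rate, but not here.
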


We refer the reader to \cite[Proposition X.1]{CDL} for some qualitative convergence results which are related to that of Proposition \ref{prop:mbar-qual}.
From the limit in the above proposition, it is clear to see that $\overline{m}^\ast$ is positive homogeneous of degree $1$, that is, for $t,s >0 $ and $x, y \in \mathbb{R}^n$,
\[
\overline{m}^\ast (st, sx, sy) = s\overline{m}^\ast (t, x, y).
\]

The next result is the key point for the main results in this paper.

\begin{thm}\label{thm:mbarmstarrate}
    Let $\varepsilon>0$, $t \geq \varepsilon$ and $x, y \in \mathbb{R}^n$ with $|x-y| \leq M_0 t$. Then, there exists a constant $C=C\left(n, \partial \Omega, M_0, K_0\right)>0$ such that
    \begin{equation}
        \left|\overline{m}^\ast \left(t, x, y\right) - \varepsilon m^\ast \left(\frac{t}{\varepsilon}, \frac{x}{ \varepsilon}, \frac{y}{\varepsilon}\right)\right| \leq C \varepsilon.
    \end{equation}
\end{thm}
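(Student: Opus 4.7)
The plan is to deduce the quantitative rate from the almost-doubling inequalities of Lemmas \ref{lem:subadd} and \ref{lem:superadd}, combined with the positive homogeneity of $\overline{m}^\ast$ that emerges from its defining limit. The key observation is that iterating the two doubling estimates dyadically produces a uniform-in-scale bound $|m^\ast(s,0,a) - \overline{m}^\ast(s,0,a)| \le C$, and the $O(\varepsilon)$ rate then drops out by applying this bound at the rescaled scale $s = t/\varepsilon$.

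\textbf{Step 1 (Dyadic iteration).} I would first iterate Lemma \ref{lem:subadd} $n$ times to obtain
\[
m^\ast(2^n s, 0, 2^n a) \leq 2^n m^\ast(s, 0, a) + (2^n - 1)C,
\]
and iterate Lemma \ref{lem:superadd} analogously to get the reversed inequality. Dividing by $2^n$ and sending $n\to\infty$ (the dyadic subsequence extracts the same limit as in Proposition \ref{prop:mbar-qual}) gives
\[
|m^\ast(s, 0, a) - \overline{m}^\ast(s, 0, a)| \leq C \quad\text{for all } s \geq 1,\ |a|\leq M_0 s.
\]

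\textbf{Step 2 (General initial point).} To handle $x\neq 0$, I would exploit the $\mathbb{Z}^n$-periodicity of $\overline{\Omega}$: for any $k\in\mathbb{Z}^n$, $m^\ast(s, x, y) = m^\ast(s, x-k, y-k)$. Choosing $k$ so that $x-k \in Y$, I would then compare $m^\ast(s, x-k, y-k)$ with $m^\ast(s, 0, y-x)$ by a bounded path-gluing at the two endpoints, modelled on the constructions in the proofs of Propositions \ref{prop:mstarbound}--\ref{prop:mstar}, yielding $|m^\ast(s, x, y) - m^\ast(s, 0, y-x)| \leq C$. Combined with the $\mathbb{Z}^n$-shift invariance of $\overline{m}^\ast$ under simultaneous shifts, Step 1 extends to
\[
|m^\ast(s, x, y) - \overline{m}^\ast(s, x, y)| \leq C.
\]

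\textbf{Step 3 (Positive homogeneity).} I would next establish $\overline{m}^\ast(\lambda t, \lambda x, \lambda y) = \lambda \overline{m}^\ast(t, x, y)$ for every $\lambda > 0$. For $\lambda = p/q \in \mathbb{Q}_{>0}$ this is immediate by restricting the defining limit to $k \in q\mathbb{N}$. For irrational $\lambda$, I would pick integers $k_j$ with $|k_j - j\lambda| \leq 1$, so $k_j/j \to \lambda$, and use the Lipschitz continuity of $m^\ast$ in its arguments (inherited from the bounds \eqref{eqn:K_0L} on $L$ via path modifications of the type used in Proposition \ref{prop:mstarbound}) to replace $j\lambda$ by $k_j$ at negligible cost and pass to the limit.

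\textbf{Step 4 (Rescaling and conclusion).} Finally, I would apply the uniform bound from Step 2 with $s = t/\varepsilon \geq 1$ (valid since $t \geq \varepsilon$) and $(x, y) \to (x/\varepsilon, y/\varepsilon)$; the condition $|x/\varepsilon - y/\varepsilon| \leq M_0(t/\varepsilon)$ is equivalent to $|x-y| \leq M_0 t$. Multiplying by $\varepsilon$ and invoking the homogeneity from Step 3 yields
\[
\bigl|\,\varepsilon\, m^\ast(t/\varepsilon, x/\varepsilon, y/\varepsilon) - \overline{m}^\ast(t, x, y)\bigr| \leq C\varepsilon.
\]

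The main technical hurdle I foresee is Step 2: the definition of $m^\ast$ takes an infimum over boundary points $\tilde x \in \partial\Omega \cap (x + Y)$, which is not exactly translation invariant in $x$, so the residual fractional shift $x - k \in Y$ must be absorbed via a careful path-gluing argument parallel to those in Section \ref{sec:metric}. The irrational-scaling subtlety in Step 3 is a standard density-plus-continuity point, and the dyadic iteration in Step 1 is routine once the two doubling lemmas are available.
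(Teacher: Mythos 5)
Your proposal is correct and follows the same route as the paper's proof: iterate the doubling Lemmas \ref{lem:subadd}--\ref{lem:superadd} to obtain a uniform-in-scale bound $|m^\ast - \overline{m}^\ast|\leq C$ at scales $s\geq 1$, then set $s=t/\varepsilon$, multiply by $\varepsilon$, and invoke positive homogeneity of $\overline{m}^\ast$. The paper is terser---it asserts ``WLOG $x=0$'' and the identity $\varepsilon\,\overline{m}^\ast(t/\varepsilon,0,y/\varepsilon)=\overline{m}^\ast(t,0,y)$ without elaboration---so your Steps~2 and~3, supplying the bounded shift to reduce to a base point and the density-plus-continuity argument for homogeneity, make explicit what the paper leaves implicit rather than change the strategy.
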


\begin{proof}
    Without loss of generality, we can assume $x=0$ and we only prove the direction
    \[\varepsilon m^\ast \left(\frac{t}{\varepsilon}, \frac{0}{ \varepsilon}, \frac{y}{\varepsilon}\right)-\overline{m}^\ast \left(t, 0, y\right) \leq C\varepsilon.\]
    The proof for the other direction is similar.

    Let $\tilde{t} : = \frac{t}{\varepsilon}$ and $\tilde{y}: =\frac{y}{\varepsilon}$. Then $\tilde{t} \geq 1$ and $|\tilde{y}| \leq M_0\tilde{t}$. From Lemma \ref{lem:superadd}, we can iterate \eqref{eqn:superadd} for $k$ times and get
    \begin{equation}\label{eqn:iterate}
    2^k \left(m^\ast\left(\tilde{t}, 0, \tilde{y}\right) -C \right)\leq m^\ast \left(2^k\tilde{t}, 0, 2^k\tilde{y}\right) -C.        
    \end{equation}
Dividing both sides of \eqref{eqn:iterate} by $2^k$ and sending $k$ to infinity, we obtain
\begin{equation}
    m^\ast \left(\tilde{t}, 0, \tilde{y}\right) - C\leq \overline{m}^\ast \left(\tilde{t}, 0, \tilde{y}\right)
\end{equation}
which implies
\[
\varepsilon m^\ast \left(\frac{t}{\varepsilon}, 0, \frac{y}{\varepsilon}\right) - \varepsilon C\leq \varepsilon \overline{m}^\ast \left(\frac{t}{\varepsilon}, 0, \frac{y}{\varepsilon}\right) = \overline{m}^\ast\left(t, 0, y\right).
\]
    
\end{proof}

We have not shown any connection between the limit $\overline{m}^\ast$ and $u$. Next, we show that after replacing the running cost in the original optimal control formula \eqref{eq:HLubar} of $u$ with $\overline{m}^\ast$, we still obtain the limiting solution $u$.

\begin{defn} Let $t > 0$ and $x \in \mathbb{R}^n$. Define
\[\overline{u}(x, t): = \inf \left\{ \overline{m}^\ast \left(t, y, x \right) +g(y): |x -y | \leq M{_0} t, y \in 
\mathbb{R}^n \right\}.\]
\end{defn}

Recall that for all $x\in \R^n$, 
\[
 u(x, t) = \inf \left\lbrace t\overline{L} \left(\frac{x-y}{t}\right) \,ds+g\left(y\right): y\in \R^n\right\rbrace.
\]
by \eqref{eq:HLubar}, and for all $x\in \ep \ol\Omega$,
\[\displaystyle u^\varepsilon (x, t)=\inf \left\{ \varepsilon m\left(\frac{t}{\varepsilon}, \frac{y}{\varepsilon}, \frac{x}{\varepsilon}\right)+g\left(y\right): |x -y | \leq M{_0} t, y \in \varepsilon \overline{\Omega}\right\}.\]

\begin{lem}\label{lem:ubaru}
    Let $t > 0 $ and $x \in \mathbb{R}^n$. Then $u(x, t) = \overline{u}(x, t)$.
\end{lem}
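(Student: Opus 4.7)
The plan is to use $u^\varepsilon$ as a bridge between $u$ and $\overline u$. Qualitatively, the classical state-constraint homogenization theory \cite{HorieIshii1998, Alvarez1999} yields $u^\varepsilon \to u$ locally uniformly on $\R^n \times [0, \infty)$ as $\varepsilon \to 0$, so it suffices to show $u^\varepsilon(x, t) \to \overline u(x, t)$ as $\varepsilon \to 0$ for each fixed $(x, t)$ with $t > 0$, and then invoke uniqueness of the limit.

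To prove $u^\varepsilon \to \overline u$, I start from the optimal-control representation of $u^\varepsilon$ combined with the velocity bound in Lemma \ref{lem:velocity bound}, which gives
\[
u^\varepsilon(x, t) = \inf\left\{\varepsilon\, m\left(\frac{t}{\varepsilon}, \frac{y}{\varepsilon}, \frac{x}{\varepsilon}\right) + g(y) : |x - y| \leq M_0 t,\ y \in \varepsilon \overline\Omega \right\}.
\]
For $\varepsilon \in (0, t]$, Proposition \ref{prp:mstarm} allows me to swap $m$ for $m^\ast$ inside the infimum with an error of order $\varepsilon$, and Theorem \ref{thm:mbarmstarrate} then replaces $\varepsilon m^\ast(t/\varepsilon, y/\varepsilon, x/\varepsilon)$ by $\overline m^\ast(t, y, x)$ with another $O(\varepsilon)$ error. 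Thus $u^\varepsilon(x, t)$ equals, up to $O(\varepsilon)$,
\[
\inf\left\{\overline m^\ast(t, y, x) + g(y) : |x - y| \leq M_0 t,\ y \in \varepsilon \overline\Omega \right\}.
\]

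To pass from $y \in \varepsilon\overline\Omega$ to $y \in \R^n$, I use that $\overline\Omega$ is $\Z^n$-periodic and meets every unit cube, so there exists a constant $\kappa$ such that any $y \in \R^n$ admits a $\tilde y \in \varepsilon \overline\Omega$ with $|y - \tilde y| \leq \kappa \varepsilon$. The Lipschitz continuity of $g$ controls $|g(\tilde y) - g(y)|$, while a continuity-in-$y$ estimate for $\overline m^\ast(t, \cdot, x)$ controls the change in $\overline m^\ast$. The two infima therefore differ by $o(1)$ as $\varepsilon \to 0$, which yields $u^\varepsilon(x, t) \to \overline u(x, t)$ and hence $u(x, t) = \overline u(x, t)$. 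Note that the restriction $|x - y| \leq M_0 t$ in $\overline u$ is automatic for the minimizer of the Hopf--Lax formula for $u$, thanks to the coercivity of $\overline L$ in \eqref{eq:Hbar-Lbar} and the Lipschitz bound on $g$.

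The main obstacle is justifying the continuity in $y$ of $\overline m^\ast(t, y, x)$. My plan is to derive it from the defining limit $\overline m^\ast(t,a,b) = \lim_{k\to\infty} k^{-1} m^\ast(kt,ka,kb)$ together with the subadditivity in Proposition \ref{prop:mstar}, the bounds in Proposition \ref{prop:mstarbound}, and the $\Z^n$-periodicity of $\Omega$. These inputs imply that $\overline m^\ast$ is translation invariant and positively $1$-homogeneous, hence $\overline m^\ast(t, y, x) = t F\!\left(\frac{x - y}{t}\right)$ with $F(v) := \overline m^\ast(1, 0, v)$; the function $F$ then inherits convexity from subadditivity and is bounded on $\{|v| \leq M_0\}$ by Proposition \ref{prop:mstarbound}, so it is locally Lipschitz, giving the required modulus.
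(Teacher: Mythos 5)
Your overall strategy — bridging between $u$ and $\overline u$ through $u^\varepsilon$ and passing to the limit — is exactly what the paper does. The genuine difference lies in how the constraint $y\in\varepsilon\overline\Omega$ versus $y\in\R^n$ is reconciled. The paper argues in two directions: the optimal $y_{\varepsilon,t,x}\in\varepsilon\overline\Omega$ for $u^\varepsilon$ is automatically admissible for $\overline u$ (giving $\overline u\le u$), and for the reverse it picks a near-optimal $\overline y_{t,x,\delta}$ for $\overline u$ and restricts to $\varepsilon$'s with $\overline y_{t,x,\delta}\in\varepsilon\overline\Omega$, thereby never needing any modulus of continuity for $\overline m^\ast(t,\cdot,x)$. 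You instead project an arbitrary $y$ to $\tilde y\in\varepsilon\overline\Omega$ at distance $O(\varepsilon)$ and absorb the change via Lipschitz bounds on $g$ and on $\overline m^\ast$. That is a valid alternative, and the structural facts you invoke for the Lipschitz bound are correct: periodicity makes $\overline m^\ast$ translation invariant (first for rational shifts via $m^\ast(kt,ka+kz,kb+kz)=m^\ast(kt,ka,kb)$ along $k$ making $kz$ integral, then by density), positive $1$-homogeneity follows from the defining limit, the subadditivity of Proposition~\ref{prop:mstar} gives midpoint convexity of $F$ in the limit (the additive $+C$ vanishes after dividing by $k$), and Proposition~\ref{prop:mstarbound} plus the lower bound $m^\ast\ge -K_0 t$ give local boundedness, so $F$ is convex and locally Lipschitz. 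What your route buys is a more symmetric argument that also sidesteps a delicate point in the paper's own proof (namely, the need for arbitrarily small $\varepsilon$ with both $x$ and $\overline y_{t,x,\delta}$ in $\varepsilon\overline\Omega$). What it costs is the extra work of proving those structural properties of $\overline m^\ast$, which the paper entirely avoids; notably, in the quantitative Theorem~\ref{thm:main1} the paper handles a similar issue not via continuity of $m^\ast$ but by exploiting the ``built-in slack'' in the definition of $m^\ast$ (if $\tilde y/\varepsilon\in\partial\Omega$ realizes $m^\ast(t/\varepsilon, y/\varepsilon, x/\varepsilon)$, then $m^\ast(t/\varepsilon, \tilde y/\varepsilon, x/\varepsilon)\le m^\ast(t/\varepsilon, y/\varepsilon, x/\varepsilon)$ by optimality, so only the Lipschitz bound on $g$ is needed). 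Two small cautions for your write-up: after the projection $y\mapsto\tilde y$ you may have $|x-\tilde y|>M_0t$ by $O(\varepsilon)$, so either work on the slightly enlarged ball $|x-y|\le(M_0+\kappa\varepsilon)t$ (as the paper does in the proof of Theorem~\ref{thm:main1}) or invoke interior Lipschitz estimates for $F$ and note that the minimizer stays strictly inside; and be careful of circularity when proving translation invariance of $\overline m^\ast$ for irrational shifts, since the direct argument only covers rational shifts and the extension to irrational ones should use the uniform subadditive bound of Proposition~\ref{prop:mstar} rather than the continuity being established.
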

\begin{proof}
Let $\delta>0$. By the qualitative homogenization result, for $\varepsilon$ small enough with $x\in \varepsilon \overline{\Omega}$, we have

\begin{equation}\label{eqn:ueps-u}
    \left|u^\varepsilon (x, t)-u (x, t)\right|< \delta.
\end{equation}
By Theorem \ref{thm:mbarmstarrate}, for any $t \geq \varepsilon$, we have

\begin{equation}\label{eqn:mbarstar-mstar}
    \left|\overline{m}^\ast(t, y, x)-\varepsilon m^\ast \left(\frac{t}{\varepsilon}, \frac{y}{\varepsilon}, \frac{x}{\varepsilon}\right)\right|\leq C \varepsilon,
\end{equation}
 for any $y\in \mathbb{R}^n$ with $|y-x| \leq M_0t$.

For any such $\varepsilon$ and $t \geq \varepsilon$, there exists $y_{\varepsilon, t, x} \in \varepsilon \overline{\Omega}$ with $|y_{\varepsilon, t, x}-x| \leq M_0 t$ such that
\begin{equation}
u^\varepsilon \left(x,t\right)= \varepsilon m \left(\frac{t}{\varepsilon}, \frac{y_{\varepsilon, t, x}}{\varepsilon}, \frac{x}{\varepsilon}\right)+g(y_{\varepsilon, t, x}).
\end{equation}
Note that $y$ can potentially depend on $\varepsilon, t, x$. It follows from Proposition \ref{prp:mstarm} that
\begin{equation} \label{eqn:ueps-mstar}
    \left|u^\varepsilon \left(x,t\right) -\varepsilon m^\ast \left(\frac{t}{\varepsilon}, \frac{y_{\varepsilon, t, x}}{\varepsilon}, \frac{x}{\varepsilon}\right)-g(y_{\varepsilon, t, x})\right| \leq C\varepsilon.
\end{equation}
Therefore, combining \eqref{eqn:ueps-u}, \eqref{eqn:mbarstar-mstar}, and \eqref{eqn:ueps-mstar}, we have
\begin{equation}
    \begin{aligned}
        \overline{u}(x, t) - u(x, t) &\leq \overline{m}^\ast(t, y_{\varepsilon, t, x}, x) + g(y_{\varepsilon, t, x}) - u(x, t)\\
        & \leq \varepsilon m^\ast \left(\frac{t}{\varepsilon}, \frac{y_{\varepsilon, t, x}}{\varepsilon}, \frac{x}{\varepsilon}\right) + C\varepsilon + g\left(y_{\varepsilon, t, x}\right) - u(x, t)\\
        & \leq u^\varepsilon(x,t) + 2C\varepsilon -u(x, t)\\
        &\leq 2C\varepsilon +\delta.\\
    \end{aligned}
\end{equation}
Sending $\varepsilon$ to zero and then $\delta$ to zero, we obtain $ \overline{u}(x, t) - u(x, t) 
\leq 0$ for any $t>0$.

On the other hand, there exists $\overline{y}_{t, x, \delta} \in \mathbb{R}^n$ with $\left|\overline{y}_{t, x, \delta}-x\right| \leq M_0t$ such that 

\begin{equation}
\overline{u}(x, t) + \delta \geq \overline{m}^\ast\left(t, \overline{y}_{t, x, \delta}, x\right) + g\left(\overline{y}_{t, x, \delta}\right).
\end{equation}
Then for $\varepsilon$ small enough with $x, \overline{y}_{t, x, \delta} \in \varepsilon \overline{\Omega}$, we have 
\begin{equation}
    \begin{aligned}
    u(x, t) &\leq u^\varepsilon (x, t) +\delta    \\
    &\leq \varepsilon m^\ast \left(\frac{t}{\varepsilon}, \frac{\overline{y}_{t, x, \delta}}{\varepsilon}, \frac{x}{\varepsilon}\right)+g(\overline{y}_{t, x, \delta})+C\varepsilon +\delta\\
    & \leq \overline{m}^\ast(t, \overline{y}_{t, x, \delta}, x) + g(\overline{y}_{t, x, \delta}) +2C\varepsilon + \delta\\
    & \leq \overline{u}\left(x,t\right) + 2C \varepsilon +2\delta.
    \end{aligned}
\end{equation}
Sending $\varepsilon$ to zero and then $\delta$ to zero, we obtain $ u(x, t) - \overline{u}(x, t) \leq 0$ for any $t > 0$.
\end{proof}


Now, we are ready to prove our first main theorem.
\begin{proof}[Proof of Theorem \ref{thm:main1}]
    If $0 < t < \varepsilon$, by the comparison principle, we know
\[
\left|u^\varepsilon(x,t)-g(x)\right|\leq Ct,
\]
and 
\[
\left|u(x,t)-g(x)\right|\leq Ct,
\]
for some constant $C>0$ that only depends on $H$ and $\|Dg\|_{L^\infty(\R^n)}$.
Therefore,
\[
\left|u^\varepsilon(x, t)-u(x, t)\right| \leq Ct \leq C \varepsilon,
\]
for some constant $C>0$ that only depends on $H$ and $\|Dg\|_{L^\infty(\R^n)}$.

If $t \geq \varepsilon$, from Lemma \ref{lem:ubaru}, it suffices to prove
\[
\left|u^\varepsilon\left(x,t\right)-\overline{u}\left(x,t \right)\right| \leq C \varepsilon.
\]

On the one hand, we have
\begin{equation}
    \begin{aligned}
    \overline{u}\left(x,t\right) &= \inf \left\{ \overline{m}^\ast \left(t, y, x \right) +g(y): |x -y | \leq M{_0} t, y \in \mathbb{R}^n \right\}\\
    &\leq \inf \left\{ \varepsilon m^\ast \left(\frac{t}{\varepsilon}, \frac{y}{\varepsilon}, \frac{x}{\varepsilon} \right) +g(y)  + C \varepsilon: |x -y | \leq M{_0} t, y \in \mathbb{R}^n \right\}\\
    &\leq \inf \left\{ \varepsilon m^\ast \left(\frac{t}{\varepsilon}, \frac{y}{\varepsilon}, \frac{x}{\varepsilon} \right) +g(y) + C \varepsilon: |x -y | \leq M{_0} t, y \in \varepsilon \overline{\Omega}\right\}\\
    & \leq u^\varepsilon \left(x, t\right)+ C \varepsilon,
    \end{aligned}
\end{equation}
where the second line follows from Theorem \ref{thm:mbarmstarrate}.

On the other hand, it follows from Lemma \ref{lem:velocity bound} and Proposition \ref{prp:mstarm} that
\begin{equation} \label{eqn:ueps}
    \begin{aligned}
        u^\varepsilon \left(x, t\right) &= \inf \left\{ \varepsilon m \left(\frac{t}{\varepsilon}, \frac{y}{\varepsilon}, \frac{x}{\varepsilon}\right) + g\left(y\right): |x- y | \leq \left(M_0 +\sqrt{n} \varepsilon\right)t, y \in \varepsilon \overline{\Omega}\right\}\\
        &\leq \inf \left\{ \varepsilon m^\ast \left(\frac{t}{\varepsilon}, \frac{y}{\varepsilon}, \frac{x}{\varepsilon}\right) + g\left(y\right)+ C \varepsilon : |x- y | \leq \left(M_0 +\sqrt{n} \varepsilon\right) t, y \in \varepsilon \overline{\Omega}\right\}.
    \end{aligned}
\end{equation}
 We claim that
 \begin{equation} \label{eqn:restricty}
 \begin{aligned}
    &\inf \left\{ \varepsilon m^\ast \left(\frac{t}{\varepsilon}, \frac{y}{\varepsilon}, \frac{x}{\varepsilon}\right)   + g\left(y\right) : |x- y | \leq \left(M_0 +\sqrt{n} \varepsilon\right)t, y \in \varepsilon \overline{\Omega}\right\}\\ 
    \leq & \inf \left\{ \varepsilon m^\ast \left(\frac{t}{\varepsilon}, \frac{y}{\varepsilon}, \frac{x}{\varepsilon}\right) +  \|Dg\|_{L^\infty} \sqrt{n} \varepsilon+ g\left(y\right) : |x- y | \leq M_0 t, y \in \mathbb{R}^n\right\}.
 \end{aligned}
 \end{equation}

 Suppose $y \notin \varepsilon \overline{\Omega}$, then there exists $\displaystyle \frac{\tilde{y}}{\varepsilon}, \frac{\tilde{x}}{\varepsilon} \in \partial \Omega$ with $\displaystyle \frac{\tilde{y}}{\varepsilon}-\frac{y}{\varepsilon} \in Y$ and $\displaystyle \frac{\tilde{x}}{\varepsilon} -\frac{x}{\varepsilon} \in Y$ such that
 \begin{equation} \label{eqn:mstarmtild}
 m^\ast \left(\frac{t}{\varepsilon}, \frac{y}{\varepsilon}, \frac{x}{\varepsilon}\right)= m \left(\frac{t}{\varepsilon}, \frac{\tilde{y}}{\varepsilon}, \frac{\tilde{x}}{\varepsilon}\right).    
 \end{equation}
Now, by the definition of $m^\ast \left(\frac{t}{\varepsilon}, \frac{\tilde{y}}{\varepsilon}, \frac{x}{\varepsilon}\right)$, we have
 \begin{equation}\label{eqn:ytildx}
  m \left(\frac{t}{\varepsilon}, \frac{\tilde{y}}{\varepsilon}, \frac{\tilde{x}}{\varepsilon}\right) \geq m^\ast \left(\frac{t}{\varepsilon}, \frac{\tilde{y}}{\varepsilon}, \frac{x}{\varepsilon}\right).
 \end{equation}
Hence, combining \eqref{eqn:mstarmtild} and \eqref{eqn:ytildx}, we obtain
 \[
 \varepsilon m^\ast \left(\frac{t}{\varepsilon}, \frac{y}{\varepsilon}, \frac{x}{\varepsilon}\right) + g\left(y\right) +  \left\|Dg\right\|_{L^\infty}\sqrt{n} \varepsilon \geq \varepsilon m^\ast \left(\frac{t}{\varepsilon}, \frac{\tilde{y}}{\varepsilon}, \frac{x}{\varepsilon}\right) + g(\tilde{y}),
 \]
which gives us \eqref{eqn:restricty}.
Moreover, it follows from Theorem \ref{thm:mbarmstarrate} that
 \begin{equation} \label{eqn:rembar}
     \begin{aligned} 
     &\inf \left\{ \varepsilon m^\ast \left(\frac{t}{\varepsilon}, \frac{y}{\varepsilon}, \frac{x}{\varepsilon}\right) +  \|Dg\|_{L^\infty} \sqrt{n} \varepsilon+ g\left(y\right) : |x- y | \leq M_0 t, y \in \mathbb{R}^n\right\}\\
     \leq \ & \inf \left\{ \overline{m}^\ast \left(t, y, x\right)+C\varepsilon + g\left(y\right): |x- y | \leq M_0 t, y \in \mathbb{R}^n\right\}\\
     \leq\ & \overline{u}\left(x, t\right) + C\varepsilon.
     \end{aligned}
 \end{equation}
 Combining \eqref{eqn:ueps}, \eqref{eqn:restricty}, and \eqref{eqn:rembar}, we have 
 \[
 u^\varepsilon(x, t) \leq \overline{u}\left(x, t\right) +C \varepsilon.
 \]

\end{proof}

\section{Homogenization in a dilute setting}\label{sec:main2}
In this section we prove Theorem \ref{thm:main2}. The following setting is always assumed:
Let $D\Subset (-\frac{1}{2},\frac{1}{2})^n$ be an open connected set with $C^1$ boundary containing $0$ and $\eta:[0,\frac{1}{2})\to [0,\frac{1}{2})$ be such that $\lim_{\ep\to 0} \eta(\ep)=0$. For $\ep>0$,
\[
\Omega^{\eta(\ep)} =\R^n\setminus \bigcup_{m\in \Z^n}(m+\eta(\ep) \ol D),\qquad  \text{ and }  \qquad \Omega_\ep = \ep \Omega^{\eta(\ep)}.
\]

Recall the optimal control formulas for $\tilde{u}^\varepsilon$, for $(x,t)\in \mathbb{R}^n \times [0,\infty)$,
\begin{equation}
    \begin{aligned}
        &\tilde{u}^\varepsilon (x, t)\\
        =\ & \inf \left\lbrace \int_0^t L\left( \frac{\xi(s)}{\varepsilon} ,\dot{\xi}(s)\right) \,ds+g\left(\xi(0)\right): \xi\in \mathrm{AC}([0,t];\mathbb{R}^n), \xi(t) = x\right\rbrace\\
        =\ &\inf \left\lbrace \varepsilon \int_0^{\frac{t}{\varepsilon}} L\left(\gamma(s),\dot{\gamma}(s)\right) \,ds+g\left(\varepsilon \gamma(0)\right): \gamma\in \mathrm{AC}\left(\left[0,\frac{t}{\varepsilon}\right];\mathbb{R}^n\right), \gamma\left(\frac{t}{\varepsilon}\right) = \frac{x}{\varepsilon}\right\rbrace.
    \end{aligned}
\end{equation}

We give the proof of Theorem \ref{thm:main2}.
\begin{proof}[Proof of Theorem \ref{thm:main2}]
Let $\varepsilon \in \left(0, \frac{1}{2}\right)$, $t>0$, and $x \in \overline{\Omega}_\varepsilon$. 
By \cite[Theorem 1.1]{TY2022},
\[
\|\tilde u^\ep - \tilde u\|_{L^\infty(\R^n \times [0,\infty))} \leq C\ep,
\]
which yields the first and last inequalities in \eqref{eqn:main2}. 

By the definition of optimal control formulas for $\tilde{u}^\varepsilon$ and $u^\varepsilon$, we have
\[
\tilde{u}^\varepsilon(x, t) \leq u^\varepsilon\left(x, t\right),
\]
which proves the second inequality in \eqref{eqn:main2}. The rest is to show the third inequality in \eqref{eqn:main2}, i.e., 
\begin{equation}\label{eqn:main2-remaining}
u^\varepsilon\left(x, t\right) \leq \tilde{u}^\varepsilon \left(x, t\right) +C\left(\varepsilon + \eta\left(\varepsilon\right)t\right).
\end{equation}

Let $\gamma:\left[0,\frac{t}{\varepsilon}\right] \to \mathbb{R}^n$ be an optimal path for $\tilde{u}^\varepsilon(x, t)$, that is, 
\[
\tilde{u}^\varepsilon\left(x, t\right)= \varepsilon \int_0^{\frac{t}{\varepsilon}} L\left(\gamma(s),\dot{\gamma}(s)\right) \,ds+g\left(\varepsilon \gamma(0)\right),
\]
with $\gamma\left(\frac{t}{\varepsilon}\right) =\frac{x}{\varepsilon}$. 
Thanks to \cite{Tran}, we have $\left\|\dot{\gamma}\right\|_{L^\infty\left[0, \frac{t}{\varepsilon}\right]} \leq M_0$.

For $k \in \mathbb{Z}^n$, we denote $D_k^\eta:=k+\eta(\varepsilon) D$ and 
\[J: = \left\{k \in \mathbb{Z}^n: \gamma \left(\left[0, \frac{t}{\varepsilon}\right]\right) \cap D_k^\eta \neq \emptyset \right\}.
\]
Note that $J$ collects all the indices $k$ such that $\gamma$ intersects with $D_k^\eta$. 
Of course, if $J=\emptyset$, then \eqref{eqn:main2-remaining} holds immediately.
Let us assume that $J \neq \emptyset$ from now on.
We claim that $\left|J\right| \leq \frac{2M_0t}{\varepsilon}$. Indeed, for $k, j \in \mathbb{Z}^n$ with $k \neq j$, $\dist\left(D_k^\eta, D_j^\eta\right) > \frac{1}{2}$ as $\eta(\ep) \in (0,\frac{1}{2})$. Since $\left\|\dot{\gamma}\right\|_{L^\infty\left[0, \frac{t}{\varepsilon}\right]} \leq M_0$, it takes at least $\frac{1}{2M_0}$ in time for $\gamma$ to travel from $D_k^\eta$ to $D_j^\eta$. Therefore,
\[
\left|J\right| \leq \frac{\frac{t}{\varepsilon}}{\frac{1}{2M_0}} \leq \frac{2M_0t}{\varepsilon}.
\]
Without loss of generality, we can assume $D_{k_1}^\eta$ is the first hole that $\gamma$ enters, and $\gamma$ enters $D_{k_{i+1}}^\eta$ after the final exit from $D_{k_i}^\eta$ for $i \in \left\{1, 2, \cdots, |J|\right\}$. Define $t_0=0$ and
\begin{equation*}
    \begin{aligned}
        s_i: &= \inf \left\{s: \gamma(s)\in D_{k_i}^\eta, s\geq t_{i-1}\right\},\\
        t_i: &= \sup \left\{s: \gamma(s)\in D_{k_i}^\eta, s\geq s_i\right\},
    \end{aligned}
\end{equation*}
for $i \in \{1, 2, \cdots, |J|\}$. Intuitively, $s_i$ is the first time $\gamma$ intersects $D_{k_i}^\eta$ after its final exit from $D_{k_{i-1}}^\eta$, while $t_i$ is the last time point at which $\gamma$ exits $D_{k_i}^\eta$. Since $\left\|\dot{\gamma}\right\|_{L^\infty} \leq M_0$, we have 
\[
t_1-s_1 \geq \frac{\left|\gamma(t_1) - \gamma(s_1)\right|}{M_0}.
\]
We claim that for some constant $C > 0$ independent of $\ep,\eta(\ep)$ and determined later,
\[
\int_{s_1}^{t_1} L \left(\gamma(s), \dot{\gamma}(s)\right) \, ds \leq C\eta (\varepsilon),
\]
and there exists a path $\tilde{\gamma}_1:[s_1, t_1] \to \overline{\Omega}^{\eta(\varepsilon)} $ such that
\begin{equation}\label{eqn:tildecostbound}
 \int_{s_1}^{t_1} L \left(\tilde{\gamma}_1(s), \dot{\tilde{\gamma}}_1(s)\right) \, ds \leq C \eta(\varepsilon).   
\end{equation}

Indeed, consider a path $\xi:[s_1, t_1] \to \mathbb{R}^n$ defined by
\begin{equation}
  \xi (s) :=  \left\{\begin{aligned} & \frac{\gamma(t_1) -\gamma(s_1)}{|\gamma(t_1) -\gamma(s_1)|} M_0\left(s-s_1\right) + \gamma(s_1), \qquad  \text{if } s \in \left[s_1, s_1+\frac{\left|\gamma(t_1) - \gamma(s_1)\right|}{M_0}\right],\\
  &\gamma (t_1) \qquad \qquad  \qquad \qquad \qquad \quad \qquad \qquad \text{if } s \in \left[s_1+\frac{\left|\gamma(t_1) - \gamma(s_1)\right|}{M_0}, t_1\right].\\
    \end{aligned}
    \right.
\end{equation}
This path essentially represents the line segment from $\gamma(t_1)$ to $\gamma(s_1)$, traversed at a constant velocity $M_0$. After reaching $\gamma(t_1)$, the path remains at $\gamma(t_1)$ for the rest of the time. Then,
\[
\begin{aligned}
    \int_{s_1}^{t_1} L\left(\xi(s), \dot{\xi}(s)\right) \, ds =& \int_{s_1}^{s_1+\frac{\left|\gamma(t_1) - \gamma(s_1)\right|}{M_0}}L \left(\xi(s), \dot{\xi}(s)\right) \, ds\\
    \leq & \left(\frac{M_0^2}{2}+K_0\right) \frac{\left|\gamma(t_1) - \gamma(s_1)\right|}{M_0}\\
    \leq & \left(\frac{M_0}{2} +\frac{K_0}{M_0}\right) \eta(\varepsilon),
\end{aligned}
\]
where the first equality comes from (A5) and Lemma \ref{lem:A5}, and the second inequality follows from \eqref{eqn:K_0L}.
Since $\gamma$ is an optimal path for $\tilde{u}^\varepsilon(x, t)$, the cost functional with respect to $\gamma$ is optimal on any subinterval of $[0,\frac{t}{\ep}]$. 
As $\xi(s_1)=\gam(s_1)$ and $\xi(t_1)=\gam(t_1)$,
\[
\int_{s_1}^{t_1} L\left(\gamma(s), \dot{\gamma}(s)\right)ds \leq \int_{s_1}^{t_1} L\left(\xi(s), \dot{\xi}(s)\right) \, ds \leq C\eta(\varepsilon).
\]

If $\gamma(0) \in D_{k_1}^{\eta}$, then $s_1=0$ and $\gamma(s_1)=\gamma(0) \in D_{k_1}^{\eta}$. 
We can define $\tilde{\gamma}_1:[0, t_1] \to \overline{\Omega}^{\eta(\varepsilon)}$ by $\tilde{\gamma}_1(s) =\gamma(t_1)$ for $s\in [0,t_1]$.
Then,
\[
\int_0^{t_1} L(\gamma(t_1), 0)\, ds =0,
\]
in which case \eqref{eqn:tildecostbound} obviously holds. Note that $\left|\gamma(0)- \tilde{\gamma}_1(0)\right| \leq \eta(\varepsilon)$. 

If $\gamma(0) \in \overline{\Omega}^{\eta(\varepsilon)}$, we can revise $\xi$ and construct a path $\tilde{\gamma}_1: [s_1, t_1] \to \overline{\Omega}^{\eta(\varepsilon)}$, 
where $\tilde{\gamma}_1$ runs on $\partial D_{k_1}^{\eta}$ and connect $\gamma(s_1)$ to $\gamma(t_1)$ on $\left[s_1, s_1+\frac{\left|\gamma(t_1) - \gamma(s_1)\right|}{M_0}\right]$, and $\tilde{\gamma}_1$ stays at $\gamma(t_1)$ for the rest of the time. Then, we obtain
\[
\int_{s_1}^{t_1} L\left(\tilde{\gamma}_1(s), \dot{\tilde{\gamma}}_1(s)\right) \, ds \leq \left(\frac{C_b^2M_0}{2} +\frac{K_0}{M_0}\right) \eta(\varepsilon).
\]
Similarly, for each $i=1,\dots,|J|$, one can prove
\begin{equation}\label{eqn:gammacost}
    \int_{s_i}^{t_i} L\left(\gamma(s), \dot{\gamma}(s)\right)ds \leq  \left(\frac{M_0}{2} +\frac{K_0}{M_0}\right) \eta(\varepsilon),
\end{equation}
and construct $\tilde{\gamma}_i: [s_i, t_i] \to \overline{\Omega}^{\eta(\varepsilon)}$ with $\tilde{\gamma}_i (s_i) = \gamma(s_i)$, $\tilde{\gamma}_i(t_i)=\gamma(t_i)$, and
\begin{equation}\label{eqn:tildegammacost}
    \int_{s_i}^{t_i} L\left(\tilde{\gamma}_i(s), \dot{\tilde{\gamma}}_i(s)\right) \, ds \leq \left(\frac{C_b^2M_0}{2} +\frac{K_0}{M_0}\right) \eta(\varepsilon),
\end{equation}
Hence, when necessary we can revise $\gamma$ to a new path $\tilde{\gamma}: \left[0, \frac{t}{\varepsilon}\right] \to \overline{\Omega}^{\eta(\varepsilon)}$ by
\begin{equation}
  \tilde{\gamma} (s) :=  \left\{\begin{aligned} & \gamma(s), \qquad \quad  \text{if } s \in \left[0, \frac{t}{\varepsilon}\right] \setminus \left(\bigcup_{i=1}^{|J|} [s_i, t_i]\right),\\
  &\tilde{\gamma}_i(s), \qquad \quad \text{if } s \in \left[s_i, t_i\right], i = 1, 2, \cdots, |J|,\\
    \end{aligned}
    \right.
\end{equation}
which is an admissible path for $u^\varepsilon(x, t)$. Therefore,
\[
\begin{aligned}
    u^\varepsilon(x, t) &\leq  \varepsilon \int_0^\frac{t}{\varepsilon} L \left(\tilde{\gamma}(s), \dot{\tilde{\gamma}}(s)\right) \, ds + g\left(\varepsilon \tilde{\gamma}(0)\right) \\
    &\leq  \varepsilon \int_0^\frac{t}{\varepsilon} L \left(\gamma(s), \dot{\gamma}(s)\right) \, ds + g\left(\varepsilon \gamma(0)\right)+ C \varepsilon |\gamma(0)-\tilde{\gamma}(0)|\\
    &\qquad+\varepsilon |J| \left(\frac{M_0}{2} +\frac{K_0}{M_0}\right) \eta(\varepsilon)+ \varepsilon |J| \left(\frac{C_b^2M_0}{2} +\frac{K_0}{M_0}\right) \eta(\varepsilon)\\
    &\leq  \tilde{u}^\varepsilon(x, t) + C \varepsilon \eta(\varepsilon)+t \eta(\varepsilon)\left(4K_0+M_0^2 + C_b^2 M_0^2\right),
\end{aligned}
\]
where the second inequality follows from (A4), \eqref{eqn:gammacost}, and \eqref{eqn:tildegammacost}, and the last inequality comes from the fact that $|J| \leq \frac{2M_0t}{\varepsilon}$.
Thus, \eqref{eqn:main2-remaining} holds and the proof is complete.

\end{proof}

In the above proof, we obtained
\begin{equation}\label{eqn:main2-better}
     u^\varepsilon(x, t)\leq  \tilde{u}^\varepsilon(x, t) + C \varepsilon \eta(\varepsilon)+t \eta(\varepsilon)\left(4K_0+M_0^2 + C_b^2 M_0^2\right),
\end{equation}
which is a bit stronger than \eqref{eqn:main2-remaining}.

We show the obtained convergence rate in Theorem \ref{thm:main2} and \eqref{eqn:main2-better} are essentially optimal through the following lemma.
\begin{lem}\label{lem:thm2-optimal}
Consider $n=2$. Let $D: = B\left(0, \frac{1}{4}\right) \subset \mathbb{R}^2$ and $\eta:\left[0,\frac{1}{3}\right)\to \left[0,\frac{1}{3}\right)$ with $\lim_{\ep\to 0} \eta(\ep)=0$ such that
    $\Omega^{\eta(\ep)} =\R^2\setminus \bigcup_{m\in \Z^2}(m+\eta(\ep) \ol D)$ and $\Omega_\ep = \ep \Omega^{\eta(\ep)}$ for $\ep \in (0,\frac{1}{3})$.
Assume
    \[
    H(y,p):=\frac{a(y)|p|^2}{2}\qquad \text{ for } (y,p)\in  \R^2\times \R^2,
    \]
where $a\in \Lip(\R)$, which is $\Z$-periodic and satisfies $a(y): =1-|y_2|$ for $y_2 \in \left[-\frac{1}{2}, \frac{1}{2} \right]$. 
Let $g(x)= -x_1$ for $x \in \mathbb{R}^2$. 

For $\varepsilon \in \left(0, \frac{1}{3}\right)$, let $u^\varepsilon: \overline{\Omega}_\varepsilon \times [0, \infty) \to \mathbb{R}$ be the unique viscosity solution to \eqref{eqn:PDE_epsilon} and $\tilde{u}^\varepsilon : \mathbb{R}^2\times [0, \infty) \to \mathbb{R}$ be the unique viscosity solution to \eqref{eqn:whole_epsilon}, respectively. Let $\left\{e_1, e_2\right\}$ be the canonical basis for $\mathbb{R}^2$. Then, there exists a constant $C> 0$ independent of $\varepsilon\in (0,\frac{1}{3})$ such that 
\begin{equation}\label{eq:main2-rigorous}
u^\varepsilon\left(\frac{\varepsilon\eta(\varepsilon)}{4}e_2, 1\right) \geq \tilde{u}^\varepsilon\left(\frac{\varepsilon\eta(\varepsilon)}{4}e_2, 1\right) - C\varepsilon \eta(\varepsilon)-C \varepsilon+ C \eta(\varepsilon)^2.
\end{equation}
\end{lem}

\begin{proof}
For $y_2 \in \left[-\frac{1}{2}, \frac{1}{2} \right]$ and $v \in \mathbb{R}^2$, we have
\[
L(y, v) = \frac{|v|^2}{2a(y)}= \frac{|v|^2}{2\left(1-|y_2|\right)}.
\]
In particular, $L(se_1, v) = \frac{|v|^2}{2}$ for $s \in \mathbb{R}$.

\smallskip

We first compute $\tilde{u}^\varepsilon(0, 1)$ and find an optimal path for $\tilde{u}^\varepsilon(0, 1)$. Suppose that $\gamma: \left[0, \frac{1}{\varepsilon}\right] \to \mathbb{R}^2$ is an optimal path for $\tilde{u}^\varepsilon(0, 1)$, that is,
\[
\tilde{u}^\varepsilon(0, 1) = \varepsilon \int_0^\frac{1}{\varepsilon} L \left(\gamma(s), \dot{\gamma}(s)\right)\, ds +g (\varepsilon \gamma(0)),
\]
and $\gamma\left(\frac{1}{\varepsilon}\right)=0$. 
We compute
\[
\begin{aligned}
    \tilde{u}^\varepsilon(0, 1) &\geq  \varepsilon \int_0^\frac{1}{\varepsilon} \frac{\left|\dot{\gamma}(s)\right|^2}{2}\, ds + g (\varepsilon \gamma(0))\\
    &\geq  \frac{1}{2}\left| \varepsilon \int_0^\frac{1}{\varepsilon}\dot{\gamma}(s) \, ds \right|^2+g (\varepsilon \gamma(0))\\
    &\geq  \frac{1}{2} \left|\varepsilon \gamma(0)\right|^2 - \left|\varepsilon \gamma(0)\right|\\
    &\geq  -\frac{1}{2}.
\end{aligned}
\]
The second inequality above follows from Jensen's inequality.
Further, the minimum is attained at $|\varepsilon \gamma(0)|=1$. In fact, 
\begin{equation}\label{eq:tilde-u-ep-1-2-opt}
  \tilde{u}^\varepsilon(0, 1) = - \frac{1}{2}  
\end{equation}
and $\tilde{\gamma}: \left[0, \frac{1}{\varepsilon}\right] \to \mathbb{R}^2$ defined by 
\[
\tilde{\gamma}(s) : = \left(\frac{1}{\varepsilon} -s \right)e_1  \qquad \text{for $s \in \left[0, \frac{1}{\varepsilon}\right]$}
\]
is an optimal path for $ \tilde{u}^\varepsilon(0, 1)$, which is a straight line segment connecting $\left(\frac{1}{\varepsilon}, 0\right)$ and $(0, 0)$.
As $\tilde u^\ep$ is globally Lipschitz, we use \eqref{eq:tilde-u-ep-1-2-opt} to yield
\begin{equation}\label{eq:tilde-u-ep-near-by}
    \tilde{u}^\varepsilon\left(\frac{\varepsilon\eta(\varepsilon)}{4}e_2, 1\right) \leq -\frac{1}{2} + C\ep\eta(\ep).
\end{equation}

Next, we estimate the value of $u^\varepsilon\left(\frac{\varepsilon\eta(\varepsilon)}{4}e_2, 1\right)$. Suppose $\xi : \left[0, \frac{1}{\varepsilon}\right] \to \overline{\Omega}^{\eta(\varepsilon)}$ is an optimal path for $u^\varepsilon\left(\frac{\varepsilon\eta(\varepsilon)}{4}e_2, 1\right)$, that is,
\[
u^\varepsilon\left(\frac{\varepsilon\eta(\varepsilon)}{4}e_2, 1\right) = \varepsilon \int_0^\frac{1}{\varepsilon} L \left(\xi(s), \dot{\xi}(s)\right)\, ds +g (\varepsilon \xi(0)),
\]
and $\xi\left(\frac{1}{\varepsilon}\right)=\frac{\eta(\ep)}{4}e_2$. For $i \in \mathbb{Z}$, we define 
\[
S_i: = \left\{(y_1, y_2) \in \mathbb{R}^2: y_1 \in \left[i-\frac{\eta(\varepsilon)}{8}, i+\frac{\eta(\varepsilon)}{8}\right]\right\},
\]
which is a vertical strip region of width $\frac{\eta(\ep)}{4}$ centered at $ie_1$. We will estimate the running cost of $\xi$ based on whether it falls within or outside the strip regions. Let $\xi(0)=\left(\xi_1(0), \xi_2(0)\right) \in \mathbb{R}^2$. The initial condition $g(x) = -x_1$ implies that $\xi(0)$ falling in the negative part of the $e_1$ axis would incur higher costs. In fact, as long as $\xi_1(0)<3/2$, the control formula yields 
$$
u^\ep\left(\frac{\eta(\ep)}{4}e_2,1\right) > -\frac{3\ep}{2} \ge -\frac12 \ge  \tilde u^\ep\left(\frac{\eta(\ep)}{4}e_2,1\right) - C\ep \eta(\ep),
$$
which confirms \eqref{eq:main2-rigorous}. Hence we can, without loss of generality, assume that $\xi_1(0) \in \left[K+ \frac{1}{2}, K+\frac{3}{2}\right]$ for some positive integer $K \in \mathbb{N}$. Then, $\xi\left(\left[0, \frac{1}{\varepsilon}\right]\right)$ passes through at least $K$ strip regions, i.e., $S_1, S_2, \cdots, S_K$. Let
\[
I: = \left\{s \in \left[0, \frac{1}{\varepsilon}\right]: \xi(s) \in S_i \text{ for some } i \in \mathbb{N}\right\},
\]
which collects all the time that $\xi$ is in a vertical strip region. Since $\left\|\dot{\xi}\right\|_{L^\infty\left[0, \frac{1}{\varepsilon}\right]} \leq M_0$, we have $|I| \geq \frac{K \eta(\varepsilon)}{4M_0}$ where $|I|$ denotes the measure of $I$. Similarly, $\xi\left(\left[0, \frac{1}{\varepsilon}\right]\right)$ passes through at least $K$ gap regions between $S_i$'s, each of which is also a strip region of width $1-\frac{\eta(\varepsilon)}{4}$. Hence, for $I^c=\left[0, \frac{1}{\varepsilon}\right]\setminus I$, we have $|I^c| \geq \frac{K}{M_0}\left(1-\frac{\eta(\varepsilon)}{4}\right)$. Note that if $\xi(s) \in S_i \cap \overline{\Omega}^{\eta(\varepsilon)}$ for some $i \in \mathbb{Z}$, then
\begin{equation}\label{eq:main2-L-strip}
L\left(\xi(s), \dot{\xi}(s)\right) =\frac{\left|\dot{\xi}(s)\right|^2}{2\left(1 -\xi_2(s)\right)} \geq \frac{\left|\dot{\xi}(s)\right|^2}{2} \left(1+\frac{\eta(\varepsilon)}{8}\right).
\end{equation}
This is because $\xi$ has to avoid the hole of radius $\frac{\eta(\varepsilon)}{4}$ and the strip is centered on the same point as the hole and has a width of $\frac{\eta(\varepsilon)}{4}$.
Besides, for $s\in I^c=\left[0, \frac{1}{\varepsilon}\right] \setminus I$, we simply have
\begin{equation}\label{eq:main2-L-rough}
L\left(\xi(s), \dot{\xi}(s)\right) \geq  \frac{\left|\dot{\xi}(s)\right|^2}{2}.
\end{equation}
We use \eqref{eq:main2-L-strip} and \eqref{eq:main2-L-rough} to imply
\begin{equation}\label{eqn:ueplowerbound}
    \begin{aligned}
u^\varepsilon\left(\frac{\varepsilon\eta(\varepsilon)}{4}e_2, 1\right) &= \varepsilon \int_0^\frac{1}{\varepsilon} L \left(\xi(s), \dot{\xi}(s)\right)\, ds - \varepsilon \xi_1(0)\\
&\geq  \frac{\varepsilon}{2} \int_{I^c} \left|\dot{\xi}(s)\right|^2 \, ds + \frac{\varepsilon}{2} \left(1 +\frac{\eta(\varepsilon)}{8}\right) \int_I \left|\dot{\xi}(s)\right|^2 \, ds - \varepsilon \xi_1(0)\\
&\geq  \frac{\varepsilon}{2|I^c|} \left(K \left(1-\frac{\eta(\varepsilon)}{4}\right)\right)^2 + \frac{\varepsilon}{2|I|} \left(1 +\frac{\eta(\varepsilon)}{8}\right) \left(\frac{K\eta(\varepsilon)}{4}\right)^2 - \varepsilon \xi_1(0)\\
& = \frac{\varepsilon}{2} \varphi\left(\left|I^c\right|\right) - \varepsilon \xi_1(0)
    \end{aligned}
\end{equation}
where the third line follows from Jensen's inequality and 
\[
\varphi(z):= \frac{1}{z} \left(K \left(1-\frac{\eta(\varepsilon)}{4}\right)\right)^2 + \frac{1}{\frac{1}{\varepsilon}-z} \left(1 +\frac{\eta(\varepsilon)}{8}\right) \left(\frac{K\eta(\varepsilon)}{4}\right)^2.
\]
In order to obtain a good estimate of $u^\varepsilon\left(\frac{\varepsilon\eta(\varepsilon)}{4}e_2, 1\right)$, we need to minimize the last line in \eqref{eqn:ueplowerbound} over $|I^c| \in \left[0, \frac{1}{\varepsilon}\right]$. Essentially, we minimize the function $\varphi$ over $z=|I^c| \in \left[0, \frac{1}{\varepsilon}\right]$. By setting $\varphi^\prime(z)=0$ and solving for $z$, we get the critical point $z_0 = {\varepsilon^{-1}\left(\sqrt{1+\frac{\eta(\varepsilon)}{8}}\frac{\eta(\varepsilon)}{4}+1-\frac{\eta(\varepsilon)}{4}\right)^{-1}}\left(1-\frac{\eta(\varepsilon)}{4}\right)$ and 
\[
\min_{z \in \left[0, \frac{1}{\varepsilon}\right]}\varphi(z)= \varphi(z_0)= \varepsilon K^2 \left(1+\left(\sqrt{1+\frac{\eta(\varepsilon)}{8}}-1\right)\frac{\eta(\varepsilon)}{4}\right)^2.
\]
Plugging back in \eqref{eqn:ueplowerbound}, we obtain
\begin{equation}\label{eqn:uepxi1}
\begin{aligned}
u^\varepsilon\left(\frac{\varepsilon\eta(\varepsilon)}{4}e_2, 1\right) &\geq \frac{\varepsilon^2 K^2}{2}\left(1+\left(\sqrt{1+\frac{\eta(\varepsilon)}{8}}-1\right)\frac{\eta(\varepsilon)}{4}\right)^2 - \varepsilon \xi_1(0)\\
&\geq \frac{\varepsilon^2 K^2}{2}\left(1+2\left(\sqrt{1+\frac{\eta(\varepsilon)}{8}}-1\right)\frac{\eta(\varepsilon)}{4}\right) - \varepsilon \xi_1(0)\\
&\geq \left(\frac{1}{2}+\frac{\eta(\varepsilon)^2}{64}\right) \varepsilon^2 K^2 - \varepsilon\xi_1(0)\\
& \geq \left(\frac{1}{2}+\frac{\eta(\varepsilon)^2}{64}\right) \varepsilon^2 \left(\xi_1(0) - \frac{3}{2}\right)^2 - \varepsilon\xi_1(0),
\end{aligned}    
\end{equation}
where the fourth inequality comes from the fact that $ \xi_1(0) \leq K+\frac{3}{2}$. By further computations, we have
\begin{equation}\label{eqn:epxi1estimate}
    \begin{aligned}
    &\left(\frac{1}{2}+\frac{\eta(\varepsilon)^2}{64}\right) \varepsilon^2 \left(\xi_1(0) - \frac{3}{2}\right)^2 - \varepsilon\xi_1(0)\\
    \geq &\left(\frac{1}{2}+\frac{\eta(\varepsilon)^2}{64}\right) \left(\varepsilon\xi_1(0)\right)^2 -\left(1+ 3\varepsilon\left(\frac{1}{2}+\frac{\eta(\varepsilon)^2}{64}\right) \right) \varepsilon \xi_1(0) +\frac{9}{8} \varepsilon^2\\
    \geq & -\frac{\left(1+ 3\varepsilon\left(\frac{1}{2}+\frac{\eta(\varepsilon)^2}{64}\right)\right)^2}{2+ \frac{\eta(\varepsilon)^2}{16}}+\frac{9}{8} \varepsilon^2\\
    \geq &  -\frac{1}{2+ \frac{\eta(\varepsilon)^2}{16}} - \frac{3}{2} \varepsilon -\frac{9}{4} \varepsilon^2 + \frac{9}{8} \varepsilon^2\\
    \geq & -\frac{1}{2} +\frac{\eta\left(\varepsilon\right)^2}{100} -\frac{3}{2} \varepsilon -\frac{9}{8} \varepsilon^2\\
    \geq &  -\frac{1}{2} + \frac{\eta\left(\varepsilon\right)^2}{100} -3 \varepsilon.
    \end{aligned}
\end{equation}
Combining \eqref{eqn:uepxi1}, \eqref{eqn:epxi1estimate}, and \eqref{eq:tilde-u-ep-near-by}, we obtain
\[
u^\varepsilon\left(\frac{\varepsilon\eta(\varepsilon)}{4}e_2, 1\right) \geq \tilde{u}^\varepsilon\left(\frac{\varepsilon\eta(\varepsilon)}{4}e_2, 1\right) - C\varepsilon \eta(\varepsilon) - 3 \varepsilon+ \frac{\eta(\varepsilon)^2}{100}.
\]
\end{proof}

\begin{rem}
    In the inequality \eqref{eq:main2-rigorous}, we have the term $C\eta(\ep)^2$ instead of $C\eta(\ep)$ because we have only used the rough lower bound \eqref{eq:main2-L-rough} and have not utilized the minimizing property of the action functional on $I^c$.
    
    Nevertheless, as $\eta(\ep)$ can tend to $0$ as slow as it wants as $\ep \to 0$,  \eqref{eq:main2-rigorous} implies that \eqref{eqn:main2-better} is essentially optimal.
\end{rem}

We now use Theorem \ref{thm:main2} to deduce a convergence rate of $\ol H_{\Om^{\eta(\ep)}}$ to $\ol H_0$ under assumptions (A1)--(A3) and (A5).

\begin{cor}\label{cor:main2}
    Assume the settings of Theorem \ref{thm:main2}.
    For $R>0$, there exists $C=C(n,\partial D, H, R)>0$ such that, for $\ep\in (0,1)$,
    \[
    \left| \ol H_{\Om^{\eta(\ep)}}(p) - \ol H_0(p) \right| \leq C(\ep + \eta(\ep)) \qquad \text{ for } |p| \leq R.
    \]
\end{cor}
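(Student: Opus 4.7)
The plan is to probe both homogenization problems with the same linear initial datum $g(y)=p\cdot y$ (truncated outside a large ball to fit {\rm (A4)}), so that the two effective Hamiltonians appear as explicit coefficients in the Hopf--Lax formula, and then to compare the two bounds on $u^\ep$ furnished respectively by Theorem \ref{thm:main2} and by the $\eta$-uniform estimate recorded in Remark \ref{rem:1}.

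Fix $p\in \R^n$ with $|p|\le R$. The velocity bound $M_0$ supplied by Lemma \ref{lem:velocity bound} and its Hopf--Lax analogue depend only on $H$ and $\|Dg\|_{L^\infty}\le R$, so they are controlled by some $M_0=M_0(H,R)$. I would choose $R_0\ge 2(M_0+1)$ and select $g\in \BUC(\R^n)\cap \Lip(\R^n)$ with $\|Dg\|_{L^\infty}\le R$ and $g(y)=p\cdot y$ on $B_{R_0}(0)$. For every $(x,t)\in B_1(0)\times(0,1]$, the minimizers defining $u^\ep(x,t)$, $\tilde u^{\eta(\ep)}(x,t)$ and $\tilde u(x,t)$ start inside $B_{R_0}(0)$, so each of these values depends on $g$ only through its linear part. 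The Hopf--Lax formula then gives
\[
\tilde u(x,t)=\inf_{v\in\R^n}\bigl(t\,\ol L_0(v)+p\cdot(x-tv)\bigr)=p\cdot x-t\,\ol H_0(p),
\]
and the identical computation with $\ol L_0$ replaced by $\ol L_{\Om^{\eta(\ep)}}$ yields
\[
\tilde u^{\eta(\ep)}(x,t)=p\cdot x-t\,\ol H_{\Om^{\eta(\ep)}}(p).
\]

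Next, Theorem \ref{thm:main2} and the $\eta$-uniform bound of Remark \ref{rem:1} applied at any $(x,t)\in(\ol\Om_\ep\cap B_1(0))\times(0,1]$ give, with $C=C(n,\partial D,H,R)$,
\[
\bigl|u^\ep(x,t)-p\cdot x+t\,\ol H_0(p)\bigr|\le C(\ep+\eta(\ep)t),\qquad \bigl|u^\ep(x,t)-p\cdot x+t\,\ol H_{\Om^{\eta(\ep)}}(p)\bigr|\le C\ep.
\]
Subtracting and applying the triangle inequality leaves
\[
t\,\bigl|\ol H_{\Om^{\eta(\ep)}}(p)-\ol H_0(p)\bigr|\le C(\ep+\eta(\ep)t),
\]
and choosing $t=1$ together with any $x\in\ol\Om_\ep\cap B_1(0)$ (which is non-empty for $\ep\in(0,1)$ since the holes of $\Om_\ep$ have diameter $O(\ep\eta(\ep))\ll 1$) produces the claimed estimate.

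No step appears to be a serious obstacle: the only care required is the truncation of $g$ and the finite-propagation verification that the values $u^\ep(x,t)$, $\tilde u^{\eta(\ep)}(x,t)$ and $\tilde u(x,t)$ are insensitive to how $g$ is extended outside $B_{R_0}(0)$, both of which follow immediately from Lemma \ref{lem:velocity bound}. It is worth noting that the dependence on $\partial\Om^{\eta(\ep)}$ in Theorem \ref{thm:main1} reduces to a dependence on $\partial D$ (uniformly in $\eta(\ep)$), since $\partial\Om^{\eta(\ep)}$ is a periodic array of scaled copies of $\partial D$; this is precisely the uniformity asserted in Remark \ref{rem:1} and is what allows the constants to be absorbed into a single $C=C(n,\partial D,H,R)$.
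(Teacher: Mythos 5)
Your proof is correct and follows essentially the same route as the paper: probe with the linear datum $g(y)=p\cdot y$, read off $\ol H_0(p)$ and $\ol H_{\Om^{\eta(\ep)}}(p)$ from the explicit Hopf--Lax formulas, apply Theorem \ref{thm:main2} together with the $\eta$-uniform bound from Remark \ref{rem:1}, and subtract at $t=1$. The truncation of $g$ outside a large ball is harmless but unnecessary: the paper notes that Theorems \ref{thm:main1}--\ref{thm:main3} hold assuming only $g\in \Lip(\R^n)$, since the constants depend on $\|Dg\|_{L^\infty}$ but not $\|g\|_{L^\infty}$, so $g(y)=p\cdot y$ may be used directly.
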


\begin{proof}
    Fix $p$ with $|p| \leq R$.
    Let $g(x)=p\cdot x$ for $x\in \R^n$.
    By Theorem \ref{thm:main2} and Remark \ref{rem:1}, for $x\in B(0,1) \cap \Om_\ep$,
    \[
    |u^\ep(x,1)-\tilde u(x,1)| \leq C(\ep +\eta(\ep)),
    \]
    and
    \[
    |u^\ep(x,1) - \tilde u^{\eta(\ep)}(x,1)| \leq C\ep.
    \]
    Combine the two inequalities to yield
    \begin{equation}\label{eq:cor21}
    |\tilde u(x,1)-\tilde u^{\eta(\ep)}(x,1)| \leq C(\ep +\eta(\ep)).
    \end{equation}
    
    On the other hand, as $g(x)=p\cdot x$, we have that, for $(x,t)\in \R^n \times [0,\infty)$,
    \begin{equation}\label{eq:cor22}
        \tilde u(x,t) = p\cdot x - \ol H_0(p) t, \qquad \text{ and } \qquad \tilde u^{\eta(\ep)}(x,t)= p\cdot x - \ol H_{\Om^{\eta(\ep)}}(p) t.
    \end{equation}
    Combine \eqref{eq:cor21} and \eqref{eq:cor22} to conclude.
\end{proof}

\begin{rem}
    Assume the settings of Theorem \ref{thm:main2}.
    Let $\eta(\ep)=\ep$ for $\ep>0$.
    For $R>0$, there exists $C=C(n,\partial D, H, R)>0$ such that, for $\ep \in (0,1)$,
    \begin{equation}\label{eq:rate-H-ep-H}
    \left| \ol H_{\Om^{\ep}}(p) - \ol H_0(p) \right| \leq C\ep  \qquad \text{ for } |p| \leq R.
    \end{equation}
    Note that $\Om^\ep \neq \Om_\ep$, and more precisely,
    \[
    \Om^\ep = \R^n \setminus \bigcup_{m\in \Z^n} (m+\ep \ol D).
    \]
    It seems that the rate in \eqref{eq:rate-H-ep-H} is the best one can hope for.
    However, this has not been studied in the literature and deserves some attention.
    Here, $\{\Om^\ep\}$ does not have the scaling property as those in \cite{KTT, Tu2, TuZhang}.
\end{rem}

\section{Homogenization with domain defects}\label{sec:main3}
In this section, we always assume the following setting.
Let $D\Subset (-\frac{1}{2},\frac{1}{2})^n$ be an open connected set  with $C^1$ boundary containing $0$.
Let $\Omega = \R^n\setminus \bigcup_{m\in \Z^n}(m+\ol D)$ and $\Omega_\ep = \ep \Omega$ for $\ep>0$.
Let $I \subsetneq \Z^n$ with $I \ne \emptyset$ be an index set that specifies the places where the holes are missing and
\[
W=\Omega \cup \bigcup_{m\in I} (m+\ol D)= \R^n\setminus \bigcup_{m\in \Z^n\setminus I}(m+\ol D), \qquad W_\ep = \ep W.
\]
Note that for $\Om_\ep$ to be connected, we need $n\geq 2$.
For $k\in \N$, $I_k = I \cap [-k,k]^n$ and condition \eqref{eqn:assumpI} reads
\begin{equation*}
    \frac{\left|I_k\right|}{k} = \omega_0\left(\frac{1}{k}\right) 
\end{equation*}
where $\omega_0$ is a given modulus of continuity.

\subsection{Proof of Theorem \ref{thm:main3}}
We always assume the settings of Theorem \ref{thm:main3} in this subsection.

Let $w^\varepsilon: \overline W_\varepsilon \times [0, \infty) \to \mathbb{R}$ be the viscosity solution to \eqref{eqn:w_epsilon}. The optimal control formula for $w^\varepsilon$ is as follows:
\begin{equation}
    \begin{aligned}
        &w^\varepsilon (x, t)\\
        =\ & \inf \left\lbrace \int_0^t L\left( \frac{\xi(s)}{\varepsilon} ,\dot{\xi}(s)\right)\, ds+g\left(\xi(0)\right): \xi\in \mathrm{AC}\left([0,t];\overline W_\varepsilon\right), \xi(t) = x\right\rbrace\\
        =\ &\inf \left\lbrace \varepsilon \int_0^{\frac{t}{\varepsilon}} L\left(\gamma(s),\dot{\gamma}(s)\right) \,ds+g\left(\varepsilon \gamma(0)\right): \gamma\in \mathrm{AC}\left(\left[0,\frac{t}{\varepsilon}\right];\overline{W} \right), \gamma\left(\frac{t}{\varepsilon}\right) = \frac{x}{\varepsilon}\right\rbrace.
    \end{aligned}
\end{equation}

\begin{lem}\label{lem:velocity bound-md}
Assume the settings of Theorem \ref{thm:main3}. 
Let $\varepsilon, t>0$ and $x\in \mathbb{R}^n$. Suppose that $\gamma:\left[0, \frac{t}{\varepsilon}\right] \to \overline{W}$ is a minimizing curve of $w^\varepsilon(x, t)$ in the sense that $\gamma$ is absolutely continuous, and
\begin{equation}
w^\varepsilon (x, t) = \varepsilon \int_0^{\frac{t}{\varepsilon}} L\left(\gamma(s),\dot{\gamma}(s)\right) \,ds+g\left(\varepsilon \gamma(0)\right)  
\end{equation}
with $\gamma\left(\frac{t}{\varepsilon}\right)=\frac{x}{\varepsilon}$. 
Then there exists a constant $M_0=M_0\left(n,\partial \Omega, H, \|Dg\|_{L^\infty(\mathbb{R}^n)}\right)>0$ such that 
\[
\left\|\dot{\gamma}\right\|_{L^\infty([0,\frac{t}{\varepsilon}])}  \leq M_0.
\]
\end{lem}
The proof of Lemma \ref{lem:velocity bound-md} is given in Appendix \ref{appendix}.
We can also define a metric function $\mdef$ similar to $m$ in Section \ref{sec:prelim}.

\begin{defn}\label{def:md}
Let $x, y\in \overline{W} $. Define 
\begin{equation}
    \mdef(t,x,y)= \inf \left\lbrace \int_0^{t} L\left(\gamma(s),\dot{\gamma}(s)\right)\, ds: \gamma\in \mathrm{AC}\left(\left[0,t\right];\overline{W} \right), \gamma(0)=x, \gamma\left(t\right)=y \right\rbrace.
\end{equation}
\end{defn}

With the metric $\mdef$ and Lemma \ref{lem:velocity bound-md}, we can rewrite the optimal control formula for $w^\varepsilon$ as

\begin{equation}
        w^\varepsilon (x, t)=\inf \left\lbrace \varepsilon \mdef\left(\frac{t}{\varepsilon}, \frac{y}{\varepsilon}, \frac{x}{\varepsilon}\right)+g\left(y\right): y \in \overline{W}_\varepsilon, \left|x-y\right|\leq M_0t\right\rbrace.
\end{equation}

\begin{lem}\label{lem:mstarmd}
    Let $t \geq 1$ and $x \in \overline{W}$. 
    Consider $y \in \overline{W}$ with $|y-x| \leq M_0t$. 
    Assume that there exists an optimal path $\eta:[0,t] \to \overline{W}$ for $\mdef\left(t, y, x\right)$, that is, 
     \[\mdef\left(t, y, x\right) = \int_0^t L\left(\eta(s), \dot{\eta}(s)\right),\] 
     with $\left\|\dot{\eta}\right\|_{L^\infty[0, t]} \leq M_0$.
    Then, there exists a constant $C=C\left(n, \partial \Omega, H, \|Dg\|_{L^\infty(\mathbb{R}^n)} \right)>0$ and a constant $\tilde{C} = \tilde{C}\left(n, \partial \Omega, H, \|Dg\|_{L^\infty(\mathbb{R}^n)} \right)>0$ such that
    \[
    m^\ast\left(t, y, x\right) \leq \mdef(t, y, x) + C \left|I_{\left\lceil M_0t+|x|\right\rceil}\right| + \tilde{C},
    \]
    where $\left|I_{\left\lceil M_0t+|x|\right\rceil}\right|$ is the cardinality of the set $I_{\left\lceil M_0t+|x|\right\rceil}$.
\end{lem}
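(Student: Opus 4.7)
The plan is to start with a near-optimal admissible path for $\mdef(t, y, x)$ in $\overline{W}$ and surgically reroute it around every missing hole it visits, producing an admissible path for $m^*(t, y, x)$. The key observations are that each rerouting costs a universal constant, while the velocity bound confines the path to a ball centered at $x$ of radius $M_0 t$, so the number of missing holes reachable is bounded by $|I_{\lceil M_0 t + |x|\rceil}|$, and assumption (A5) allows cost-free pausing via Lemma \ref{lem:A5}.

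First, I would fix $\delta > 0$ and choose $\gamma \in \mathrm{AC}([0,t]; \overline{W})$ with $\gamma(0) = y$, $\gamma(t) = x$, and $\int_0^t L(\gamma,\dot\gamma)\, ds \le \mdef(t,y,x) + \delta$. By the obvious analog of Lemma \ref{lem:velocity bound} for $w^\varepsilon$ (the proof there uses only the Hamiltonian structure and the comparison principle, both of which hold on $\overline{W}$), I may assume $\|\dot\gamma\|_{L^\infty([0,t])} \le M_0$. Consequently, $\gamma([0,t]) \subset B(x, M_0 t)$, so every $m \in I$ with $(m+\overline{D}) \cap \gamma([0,t]) \ne \emptyset$ belongs to $I_{\lceil M_0 t + |x|\rceil}$. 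I would then extract the finitely many maximal subintervals $\{[a_j, b_j]\}_{j=1}^J \subset [0,t]$ on which $\gamma$ lies in $m_j + \overline{D}$ for some $m_j \in I$; here $J \le |I_{\lceil M_0 t + |x|\rceil}|$, and by maximality $\gamma(a_j), \gamma(b_j) \in m_j + \partial D \subset \partial \Omega$, except possibly when $a_1 = 0$ or $b_J = t$.

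Next, on each interval $[a_j, b_j]$ with $0 < a_j$ and $b_j < t$, I would replace $\gamma$ by a curve along $m_j + \partial D \subset \partial \Omega$ connecting $\gamma(a_j)$ to $\gamma(b_j)$. By Lemma \ref{lem:pOmetric}, such a curve has length at most $C_b |\gamma(b_j) - \gamma(a_j)|$; traversing it at speed $C_b M_0$ takes time $|\gamma(b_j)-\gamma(a_j)|/M_0 \le b_j - a_j$, after which the path remains at $\gamma(b_j)$ until time $b_j$ at zero Lagrangian cost by Lemma \ref{lem:A5}. Since $|\gamma(b_j)-\gamma(a_j)| \le \sqrt n$, the cost of each such excursion is bounded by $\bigl((C_b M_0)^2/2 + K_0\bigr)\sqrt n / M_0$, a universal constant $C$. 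Gluing these detours with the unmodified parts of $\gamma$ (which already lie in $\overline{\Omega}$) and invoking $L \ge 0$ to discard the old contributions on $\bigcup_j [a_j, b_j]$, I would obtain a path in $\overline{\Omega}$ of total cost at most $\mdef(t, y, x) + \delta + CJ$.

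Finally, I would convert the resulting path into one admissible for $m^*(t, y, x)$ by adjusting the endpoints to lie on $\partial \Omega$. If $y \in m_1 + \overline{D}$ with $m_1 \in I$, the strict inclusion $D \Subset (-\tfrac12,\tfrac12)^n$ guarantees some $\tilde y \in m_1 + \partial D$ with $|\tilde y - y|_\infty \le 1/2$, hence $\tilde y - y \in Y$; I would simply begin the modified path at $\tilde y$. If instead $y \in \overline{\Omega}$, I would pick any $\tilde y \in \partial \Omega$ with $\tilde y - y \in Y$ and prepend a short connector by re-time-parametrizing the initial non-excursion segment of $\gamma$ (padding with a zero-cost pause via Lemma \ref{lem:A5}) at bounded extra cost; symmetric treatment is applied at $x$. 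These adjustments contribute an additional constant $\tilde C$. Taking the infimum over admissible $\tilde y, \tilde x$ and sending $\delta \to 0$ yields the claim. The principal obstacle I anticipate is this endpoint step: producing $\tilde y \in \partial \Omega$ with $\tilde y - y \in Y$ when $y$ is deep inside a missing hole depends essentially on $D \Subset (-\tfrac12,\tfrac12)^n$, and absorbing the connector cost into a $t$-independent constant relies on Lemma \ref{lem:A5}; without (A5), the detour pauses would incur positive cost proportional to the excursion durations, which could accumulate to a term scaling with $t$ rather than with $|I_{\lceil M_0 t + |x|\rceil}|$.
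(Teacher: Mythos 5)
Your approach mirrors the paper's: start from a velocity-bounded near-optimal path for $\mdef(t,y,x)$ in $\overline W$, reroute each excursion into a missing hole along $\partial\Omega$ at a cost that is bounded by a universal constant (using (A5) and Lemma \ref{lem:A5} to pause for free), bound the number of visited holes by $|I_{\lceil M_0t+|x|\rceil}|$ via the containment $\gamma([0,t])\subset B(x,M_0t)$, and then adjust endpoints to produce an admissible competitor for $m^*(t,y,x)$. The cost-per-excursion estimate, the use of Lemma \ref{lem:pOmetric} for the $\partial D$ reroute, the reliance on (A5) for the pause, and the endpoint construction when $y$ lies inside a missing hole (shooting a ray in a coordinate direction gives $\tilde y\in m_1+\partial D$ with $\tilde y-y\in Y$, using $\ol D\Subset(-\tfrac12,\tfrac12)^n$) are all sound and essentially coincide with the paper's argument.

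There is, however, one genuine gap in the counting step. You declare the excursion intervals to be ``the finitely many maximal subintervals $[a_j,b_j]$ on which $\gamma$ lies in $m_j+\overline D$ for some $m_j\in I$'' and then assert $J\le |I_{\lceil M_0t+|x|\rceil}|$. With the natural reading of ``maximal subinterval on which $\gamma$ lies in the hole'' (connected component of $\gamma^{-1}(m_j+\overline D)$), this bound is false: the path can exit and re-enter the same hole arbitrarily many times, producing many such intervals for a single $m_j$. Since each reroute contributes a fixed constant, the resulting error would scale with the number of excursions rather than with the number of defective cells, and one would only recover $O(t)$ rather than $O(|I_{\lceil M_0t+|x|\rceil}|)$. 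The fix, which is exactly what the paper does, is to replace on the \emph{hull} interval per hole: for each visited $m_i\in I$, set $s_i$ to be the first time $\gamma$ touches $m_i+\partial D$ and $t_i$ the last exit time, replace $\gamma$ on the entire interval $[s_i,t_i]$ (which may well contain portions of $\gamma$ lying in $\overline\Omega$) by a single rerouted segment, and invoke $L\ge 0$ from Lemma \ref{lem:A5} to discard the original contribution over $[s_i,t_i]$. Then there is exactly one replacement per visited hole and the count is at most $|I_{\lceil M_0t+|x|\rceil}|$. Aside from the restatement of the excursion intervals, the rest of your argument goes through; it differs from the paper's only in minor packaging (you handle the endpoint adjustment inline, while the paper passes through $m$ and invokes Proposition \ref{prp:mstarm}, and you use a single uniform ``traverse-then-pause'' reroute rather than splitting into short/long excursion cases).
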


\begin{proof}
    Let $t>1$ and $x\in \overline{W}$. 
    Let $\eta:[0,t] \to \overline{W}$ be an optimal path for $\mdef\left(t, y, x\right)$ with $\left\|\dot{\eta}\right\|_{L^\infty[0, t]} \leq M_0$.

    \smallskip
    
     In $[-\left\lceil M_0t+|x|\right\rceil,  \left\lceil M_0t+|x|\right\rceil]^n$, there are $|I_{\left\lceil M_0t+|x|\right\rceil}|$ unit cells that do not have holes in them. For $m_i \in I_{\left\lceil M_0t+|x|\right\rceil}$ and $i\in \left\{ 1, 2, \cdots, \left|I_{\left\lceil M_0t+|x|\right\rceil}\right|\right\}$, define
     \[
     \cH_i := m_i + \overline{D},
     \]
which are the holes in the defective unit cells. 
Note that $\cH_i\subset \overline{W}$ and hence $\eta$ can run into $\cH_i \subset \overline{W}$ for any $i\in \{1, 2, \cdots, \left|I_{\left\lceil M_0t+|x|\right\rceil}\right|\}$. 
Without loss of generality, we can assume $\cH_1$ is the first hole that $\eta$ enters and $\eta$ enters $\cH_i$ after the final exit from $\cH_{i-1}$. 
There are two cases to be considered.

\medskip

\noindent {\bf Case 1.} Suppose $x, y \in \overline{\Omega}$. 
Define $t_0=0$ and
\begin{equation*}
    \begin{aligned}
        s_i: &= \inf \left\{s: \eta(s)\in \partial \cH_i, s\geq t_{i-1}\right\},\\
        t_i: &= \sup \left\{s: \eta(s)\in \partial \cH_i, s\geq s_i\right\},
    \end{aligned}
\end{equation*}
for $i \in \{1, 2, \cdots, \left|I_{\left\lceil M_0t+|x|\right\rceil}\right|\}$. Intuitively, $s_i$ represents the initial time point at which $\eta$ enters $\cH_i$ after its final exit from $\cH_{i-1}$, while $t_i$ denotes the final time point at which $\eta$ exits $\cH_i$.

Consider $s \in [s_1, t_1]$ and define $y_1 := \eta(s_1)$ and $x_1:=\eta(t_1)$. We claim that there exists a path $\eta_1: [s_1, t_1] \to Y_{m_1} \cap \overline{\Omega}$ with $\eta_1(s_1) = y_1$, $\eta_1(t_1) = x_1$, and 
\begin{equation}\label{eq:rep-claim}
    \int_{s_1}^{t_1} L(\eta_1(s), \dot{\eta}_1(s))\,ds \leq C,
\end{equation}
for some constant $C=C\left(n, \partial \Omega, H, \|Dg\|_{L^\infty(\mathbb{R}^n)} \right) >0$.

\begin{itemize}
    \item[(a)] Suppose $t_1-s_1 \leq 1$. Since $\left\|\dot{\eta}\right\|_{L^\infty[0, t]} \leq M_0$, we have 
\[
\left|y_1-x_1\right| \leq \int_{s_1}^{t_1}\left|\dot{\eta}(s)\right| \leq M_0 (t_1 -s_1),
\]
which implies $\frac{|y_1-x_1|}{t_1-s_1} \leq M_0$. By a similar argument as in the proof of Proposition \ref{prop:mstarbound}, there exists a path $\eta_1:[s_1, t_1] \to Y_{m_1} \cap \overline{\Omega}$ such that $\eta_1(s_1) = y_1$, $\eta_1(t_1) = x_1$, and $\left\|\dot{\eta}_1\right\|_{L^\infty[s_1, t_1]} \leq C_b M_0$. Moreover, 

\begin{equation}\label{eqn:costtimel1}
 \begin{aligned}
 \int_{s_1}^{t_1} L(\eta_1(s), \dot{\eta}_1(s))\,ds &\leq \int_{s_1}^{t_1}\left(\frac{\left|\dot{\eta}_1(s)\right|^2}{2} + K_0\right)\,ds \\
 &\leq \left(\frac{C_b^2M_0^2}{2}+K_0\right)\left(t_1-s_1\right)\\
 & \leq \left(\frac{C_b^2M_0^2}{2}+K_0\right)
\end{aligned}
\end{equation}
since $t_1-s_1 \leq 1$.

\item[(b)] Suppose $t_1-s_1 > 1$. Consider the straight line segment connecting $y_1, x_1$ using time 1, that is,
\[
\gamma(s):= (x_1-y_1)s +y_1,
\]
for $s \in [0, 1]$, and $\left\|\dot{\gamma}\right\|_{L^\infty[0, 1]}=|x_1-y_1|$. By a similar argument as in the proof of Proposition \ref{prop:mstarbound}, we can revise this straight line segment into $\tilde{\gamma}: [0,1] \to Y_{m_1} \cap \overline{\Omega}$ with $\tilde{\gamma}(0)=y_1$, $\tilde{\gamma}(1)=x_1$, and $\|\dot{\gamma}\|_{L^\infty[0, 1]} \leq C_b |x_1-y_1|\leq C_b \sqrt{n}$. Define a new path $\tilde{\eta}_1:[s_1, t_1] \to Y_{m_1} \cap \overline{\Omega}$ by 
\[
\tilde{\eta}_1(s): =\left\{
\begin{aligned}
&\tilde{\gamma}(s-s_1), \quad \text{ for } s \in [s_1, s_1+1],\\    
& x_1, \qquad \qquad \text{ for } s \in [s_1+1, t_1].
\end{aligned}
\right.
\]
Now
\begin{equation}\label{eqn:costtimeg1}
 \begin{aligned}
 &\int_{s_1}^{t_1} L(\tilde{\eta}_1(s), \dot{\tilde{\eta}}_1(s))\,ds \\
 \leq\ &  \int_{s_1}^{s_1+1} L\left( \tilde{\gamma}(s-s_1), \dot{\tilde{\gamma}}(s-s_1)\right)\,ds + \int_{s_1+1}^{t_1} L(x_1, 0) \,ds\\
 \leq \ &\int_0^1 L\left(\tilde{\gamma}(s), \dot{\tilde{\gamma}}(s)\right)\,ds+0
 \leq \int_{0}^{1}\left(\frac{\left|\dot{\tilde{\gamma}}(s)\right|^2}{2} + K_0\right)\,ds \\
\leq \ & \frac{n C_b^2}{2}+K_0,
\end{aligned}
\end{equation}
where the second inequality follows from ({\rm A5}) and Lemma \ref{lem:A5}.
\end{itemize}

Combining \eqref{eqn:costtimel1} and \eqref{eqn:costtimeg1}, we prove the claim \eqref{eq:rep-claim}. 
In general, for $1 \leq i \leq  \left|I_{\left\lceil M_0t+|x|\right\rceil}\right|$, define $y_i:= \eta(s_i)$ and $x_i:= \eta(t_i)$. Then, there exists a path $\eta_i: [s_i, t_i] \to Y_{m_i} \cap \overline{\Omega}$ with $\eta_i(s_i) = y_i$, $\eta_i(t_i) = x_i$, and 
\begin{equation}\label{eqn:bdihole}
    \int_{s_i}^{t_i} L(\eta_i(s), \dot{\eta}_i(s))\,ds \leq C,
\end{equation}
for some constant $C=C\left(n, \partial \Omega, H, \|Dg\|_{L^\infty(\mathbb{R}^n)} \right) >0$.
Note that $x_i$ and $y_i$ may not exist, but this is perfectly acceptable as we do not need to consider them in that case.
Now, define a new path $\xi: [0, t] \to \overline{\Omega}$ defined by
\begin{equation}\label{eqn:ihrevise}
\xi(s):=\left\{\begin{aligned}
& \eta(s), \quad \,\text{ for } s \in [0,t]\setminus \left(\bigcup_{i=1}^{\left|I_{\left\lceil M_0t+|x|\right\rceil}\right|} [s_i, t_i]\right),\\
&\eta_i(s), \quad \text{ for } s \in [s_i, t_i], i\in \left\{ 1, 2, \cdots, \left|I_{\left\lceil M_0t+|x|\right\rceil}\right|\right\},
\end{aligned}
\right.
\end{equation}
and this is an admissible path for $m(t,y,x)$. Therefore,

\begin{equation}
\begin{aligned}
    &m(t, y, x)\\
    \leq\ &\int_0^t L\left(\xi(s), \dot\xi(s)\right)\, ds \\
    \leq\ & \int_{[0,t]\setminus \left(\bigcup_{i=1}^{\left|I_{\left\lceil M_0t+|x|\right\rceil}\right|} [s_i, t_i]\right)} L(\eta(s), \dot{\eta}(s))\,ds+
    \sum_{i=1}^{\left|I_{\left\lceil M_0t+|x|\right\rceil}\right|}\int_{[s_i, t_i]} L(\eta_i(s), \dot{\eta_i}(s))\,ds\\
    \leq \ &\int_0^t L(\eta(s), \dot{\eta}(s))\,ds +\sum_{i=1}^{\left|I_{\left\lceil M_0t+|x|\right\rceil}\right|}\int_{[s_i, t_i]} \left( L(\eta_i(s), \dot{\eta_i}(s))-L(\eta(s), \dot{\eta}(s))\right)\,ds\\
    \leq \ &\int_0^t L(\eta(s), \dot{\eta}(s))\, ds+\sum_{i=1}^{\left|I_{\left\lceil M_0t+|x|\right\rceil}\right|}\int_{[s_i, t_i]} L(\eta_i(s), \dot{\eta_i}(s))\,ds\\
    \leq \ & \mdef\left(t, y, x\right)+C\left|I_{\left\lceil M_0t+|x|\right\rceil}\right|,
\end{aligned}
\end{equation}
where the second to last inequality follows from ({\rm A5}) and Lemma \ref{lem:A5}, and $C=C\left(n, \partial \Omega, H, \|Dg\|_{L^\infty(\mathbb{R}^n)} \right)$ comes from \eqref{eqn:bdihole}. By Proposition \ref{prp:mstarm}, we have 
\[
m^\ast(t, y, x) \leq \tilde{C} + \mdef\left(t, y, x\right)+C\left|I_{\left\lceil M_0t+|x|\right\rceil}\right|
\]
for some constant $\tilde{C}=\tilde{C}\left(n, \partial \Omega, H, \|Dg\|_{L^\infty(\mathbb{R}^n)} \right)>0$.

\medskip

\noindent {\bf Case 2.} Suppose either $x$ or $y$ is in $\cH_i = m_i+ \overline{D}$ for some $i\in \left\{ 1, 2, \cdots, \left|I_{\left\lceil M_0t+|x|\right\rceil}\right|\right\}$, that is, a hole in one of the defective unit cells. Without loss of generality, we assume $y \in \cH_1=  m_1+ \overline{D}$ and $x \in \overline{\Omega}$. One can prove for the case where $x$ is in a hole of a defective unit cell and $y$ is not and the case both $x, y$ are in some holes similarly. Define
    \[
    t_1 : = \sup \left\{s: \eta(s) \in \partial \cH_1, s\geq 0 \right\}, 
    \]
and 
\begin{equation*}
    \begin{aligned}
        s_i: &= \inf \left\{s: \eta(s)\in \partial \cH_i, s\geq t_{i-1}\right\},\\
        t_i: &= \sup \left\{s: \eta(s)\in \partial \cH_i, s\geq s_i\right\},
    \end{aligned}
\end{equation*}
for $i \in \left\{2, \cdots, \left|I_{\left\lceil M_0t+|x|\right\rceil}\right|\right\}$ as in Case 1. 
Similar to \eqref{eqn:ihrevise} in Case 1, we can find a revised path $\xi: [0, t] \to \overline{\Omega}$ defined by
\begin{equation}
\xi(s):=\left\{\begin{aligned}
& \eta(s), \quad \,\text{ for } s \in [0,t]\setminus \left(\bigcup_{i=2}^{\left|I_{\left\lceil M_0t+|x|\right\rceil}\right|} [s_i, t_i]\right),\\
&\eta_i(s), \quad \text{ for } s \in [s_i, t_i], i\in \left\{2, \cdots, \left|I_{\left\lceil M_0t+|x|\right\rceil}\right|\right\},
\end{aligned}
\right.
\end{equation}
for some paths $\eta_i: [s_i, t_i] \to Y_{m_i} \cap \overline{\Omega}$ with $\eta_i(s_i) = y_i$, $\eta_i(t_i) = x_i$, and 
\begin{equation}
    \int_{s_i}^{t_i} L(\eta_i(s), \dot{\eta}_i(s))\,ds \leq C,
\end{equation}
with $i \in \left\{2, \cdots, \left|I_{\left\lceil M_0t+|x|\right\rceil}\right|\right\}$. Moreover,
\begin{equation} \label{eqn:xibound}
\left\|\dot{\xi}\right\|_{L^\infty[0, t]} \leq \tilde{C}: = \max\left\{ C_bM_0, C_b\sqrt{n}\right\}.
\end{equation}

By the proof in Case 1, we know
\begin{equation}\label{eqn:ximd}
\int_0^t L\left(\xi(s), \xi(s)\right) \,ds\leq \mdef(t, y, x) +C \left|I_{\left\lceil M_0t+|x|\right\rceil}\right|.
\end{equation}
\begin{itemize}
    \item[(a)]Suppose $t_1<\frac{1}{8}$. Let $\tilde{x}, \tilde{y} \in \partial \Omega$ with $\tilde{x}-x \in  Y, \tilde{y}-y\in Y$. From the proof of Proposition \ref{prop:mstarbound}, we know there exists a path $\alpha: \left[0, \frac{1}{8}\right] \to \overline{\Omega}$ such that $\alpha(0)=\tilde{y}$, $\alpha\left(\frac{1}{8}\right)=\xi\left(t_1\right)=\eta\left(t_1\right)$, and a path $\beta:\left[0, \frac{1}{8}\right] \to \overline{\Omega}$ such that $\beta(0)=x$, $\beta(\frac{1}{8})=\tilde{x}$. Now consider a new path $\eta:[0, t] \to \overline{\Omega}$ defined by
    \begin{equation}
    \tilde{\eta}(s) :=  \left\{\begin{aligned} &\alpha(s), \quad \qquad \qquad \qquad \qquad \qquad \text{if } s \in \left[0, \frac{1}{8}\right],\\
  &\xi\left(2\left(s-\frac{1}{8}\right)+t_1\right), \quad \qquad \quad \text{if } s \in \left[\frac{1}{8}, \frac{3}{8}\right],\\
  &\xi \left(s+\frac{1}{8}+ t_1\right), \, \, \qquad \qquad \qquad \text{if }  s \in \left[\frac{3}{8}, t-t_1-\frac{1}{8}\right],\\
  & x, \quad \qquad \qquad \qquad \,\, \quad \qquad \qquad \text{if } s \in \left[t-t_1-\frac{1}{8}, t-\frac{1}{8}\right],\\
&\beta \left(s-t+\frac{1}{8}\right),   \qquad \qquad   \qquad \, \, \, \text{if }  s \in \left[t-\frac{1}{8}, t\right],\\
    \end{aligned}
    \right.
    \end{equation}
which is an admissible path for $m^\ast\left(t, y, x\right)$. Therefore, 
\begin{equation}
\begin{aligned}
    &m^\ast\left(t, y, x\right)\\ 
    \leq\ &\int_0^t L\left(\tilde{\eta}(s), \dot{\tilde{\eta}}(s)\right)\,ds\\
    \leq\ & \int_0^\frac{1}{8} L\left(\alpha(s), \dot{\alpha}(s)\right)\,ds + \int_\frac{1}{8}^\frac{3}{8}L \left(\xi\left(2\left(s-\frac{1}{8}\right)+t_1\right), 2\dot{\xi} \left(2\left(s-\frac{1}{8}\right) + t_1\right)\right)\,ds\\
    & \ + \int_\frac{3}{8}^{t-t_1-\frac{1}{8}} L\left(\xi \left(s+\frac{1}{8}+ t_1\right), \dot{\xi} \left(s+\frac{1}{8}+ t_1\right)\right) + \int_{t-t_1-\frac{1}{8}}^{t-\frac{1}{8}} L(x, 0) \,ds\\ 
    &\ +\int_{t-\frac{1}{8}}^t L\left(\beta\left(s-t +\frac{1}{8}\right), \dot{\beta}\left(s- t +\frac{1}{8}\right)\right)\,ds\\
    \leq \ & \left(32C_b^2 n +K_0\right) \frac{1}{8} + \frac{1}{2} \int_{t_1}^{\frac{1}{2}+t_1} L \left(\xi(s), 2\dot{\xi}(s)\right)\,ds+\int_{\frac{1}{2}+t_1}^tL\left(\xi(s), \dot{\xi}(s)\right)\,ds\\
    &\ + 0 + \left(32C_b^2 n +K_0\right) \frac{1}{8}\\
    \leq \ & 8C_b^2n +\frac{K_0}{4} + \frac{1}{4} \left(\frac{4\tilde{C}^2}{2}+ K_0\right) + \int_0^tL\left(\xi(s), \dot{\xi}(s)\right)\,ds\\
    \leq \ & 8C_b^2n + \frac{\tilde{C}^2}{2}+\frac{K_0}{2} +  \mdef(t, y, x) +C \left|I_{\left\lceil M_0t+|x|\right\rceil}\right|,
\end{aligned}
\end{equation}
where the third inequality follows from \eqref{prop:mstarbound} and \eqref{eqn:xibound} and the last inequality follows from \eqref{eqn:ximd}.

    \item[(b)]Suppose $t_1 \geq \frac{1}{8}$. Let $\tilde{x}, \tilde{y} \in \partial \Omega$ with $\tilde{x}-x \in Y, \tilde{y}-y\in Y$. By the proof of Proposition \ref{prop:mstarbound}, there exists a path $\alpha: [0, \frac{1}{16}] \to \overline{\Omega}$ such that $\alpha(0)=\tilde y$, $\alpha\left(\frac{1}{16}\right)=\eta \left(t_1\right)=\xi \left(t_1\right)$, and $\left\| \dot{\alpha} \right\|_{L^\infty\left[0, \frac{1}{16}\right]} \leq 16C_b\sqrt{n}$. And there exists a path $\beta: \left[0, \frac{1}{16}\right] \to \overline{\Omega}$ with $\beta(0)=x$, $\beta\left(\frac{1}{16}\right)=\tilde{x}$, and $\left\| \dot{\beta} \right\|_{L^\infty\left[0, \frac{1}{16}\right]} \leq 16C_b\sqrt{n}$. Now consider a new path $\tilde{\eta}: [0, t] \to \overline{\Omega}$ defined by
   \begin{equation}
    \tilde{\eta}(s) :=  \left\{\begin{aligned} &\alpha(s), \qquad \qquad \qquad \qquad \quad \, \, \, \text{if } s \in \left[0, \frac{1}{16}\right],\\
  &\xi\left(s-\frac{1}{16}+t_1\right), \, \quad \quad \qquad  \text{if } s \in \left[\frac{1}{16}, t-t_1+\frac{1}{16}\right],\\
  & x, \qquad \quad \qquad \qquad \qquad \qquad \text{if } s \in \left[t-t_1+\frac{1}{16}, t-\frac{1}{16}\right], \\
  &\beta \left(s-t+\frac{1}{16}\right),  \, \qquad \, \, \qquad  \text{if } s \in \left[t-\frac{1}{16}, t\right],
    \end{aligned}
    \right.
    \end{equation}    
which is an admissible path for $m^\ast(t, y, x)$. Therefore, 
\[
\begin{aligned}
    &m^\ast(t, y, x) \\
    \leq\ &\int_0^\frac{1}{16} L\left(\alpha(s), \dot{\alpha}(s)\right)\,ds + \int_\frac{1}{16}^{t-t_1+\frac{1}{16}} L\left(\xi\left(s-\frac{1}{16}+t_1\right), \dot{\xi} \left(s-\frac{1}{16}+t_1\right)\right)\,ds \\
    &\ + \int_{t-t_1+\frac{1}{16}}^{t-\frac{1}{16}} L(x, 0)\,ds+ \int_{t-\frac{1}{16}}^tL\left(\beta \left(s-t+ \frac{1}{16}\right), \dot{\beta}\left(s-t+\frac{1}{16}\right) \right)\,ds\\
    \leq \ & \frac{1}{16} \left(\frac{16^2C_b^2n}{2}+K_0\right)+ \int_{t_1}^t L \left(\xi(s), \dot{\xi}(s)\right)\,ds +0 +\frac{1}{16} \left(\frac{16^2C_b^2n}{2}+K_0\right)\\
    \leq\  & 8 C_b^2n +\frac{K_0}{8}+ \mdef(t, y, x) +C \left|I_{\left\lceil M_0t+|x|\right\rceil}\right|,
\end{aligned}
\]
where the last inequality follows from \eqref{eqn:ximd}.
\end{itemize}
\end{proof}

We give the proof of Theorem \ref{thm:main3}.
\begin{proof}[Proof of Theorem \ref{thm:main3}]
 Fix $x\in \overline W_\varepsilon$. If $0 < t < \varepsilon$, by the comparison principle, we know
\[
\left|w^\varepsilon(x,t)-g(x)\right|\leq Ct,
\]
and 
\[
\left|u(x,t)-g(x)\right|\leq Ct,
\]
for some constant $C>0$ that only depends on $H$.
Therefore,
\[
\left|w^\varepsilon(x, t)-u(x, t)\right| \leq Ct \leq C \varepsilon,
\]
for some constant $C>0$ that only depends on $H$. 

If $t \geq \varepsilon$, suppose 
\[
w^\varepsilon(x, t) = \varepsilon \mdef \left(\frac{t}{\varepsilon}, \frac{\tilde{y}}{\varepsilon}, \frac{x}{\varepsilon} \right) +g(\tilde{y})
\]
for some $\tilde{y} \in \overline W_\varepsilon$ with $|\tilde{y}-x| \leq M_0 t$. Let $\gamma: \left[0, \frac{t}{\varepsilon}\right] \to \overline{W}$ be an optimal path for $\mdef\left(\frac{t}{\varepsilon}, \frac{\tilde{y}}{\varepsilon}, \frac{x}{\varepsilon}\right)$. Then $\left\|\dot{\gamma}\right\|_{L^\infty([0,\frac{t}{\varepsilon}])}  \leq M_0$ by Lemma \ref{lem:velocity bound-md}. From Lemma \ref{lem:mstarmd}, we have
\begin{equation}
\begin{aligned}
        \varepsilon m^\ast \left(\frac{t}{\varepsilon}, \frac{\tilde{y}}{\varepsilon}, \frac{x}{\varepsilon} \right) &\leq  \varepsilon \mdef \left(\frac{t}{\varepsilon}, \frac{\tilde{y}}{\varepsilon}, \frac{x}{\varepsilon} \right) +  \varepsilon C \left|I_{\left\lceil\frac{M_0t+|x|}{\varepsilon} \right\rceil}\right| + \varepsilon \tilde{C}\\
    &\leq  \varepsilon \mdef \left(\frac{t}{\varepsilon}, \frac{\tilde{y}}{\varepsilon}, \frac{x}{\varepsilon} \right) + \varepsilon C \frac{ \left|I_{\left\lceil\frac{M_0t+|x|}{\varepsilon} \right\rceil}\right| }{\left\lceil\frac{M_0t+|x|}{\varepsilon} \right\rceil} \left\lceil\frac{M_0t+|x|}{\varepsilon} \right\rceil + \varepsilon \tilde{C}\\
    &\leq  \varepsilon \mdef \left(\frac{t}{\varepsilon}, \frac{\tilde{y}}{\varepsilon}, \frac{x}{\varepsilon} \right) + C\omega_0 \left( \frac{1}{\left\lceil\frac{M_0t+|x|}{\varepsilon} \right\rceil}\right) \left(M_0t+|x| +\varepsilon \right) + \varepsilon \tilde{C}\\
    &\leq  \varepsilon \mdef \left(\frac{t}{\varepsilon}, \frac{\tilde{y}}{\varepsilon}, \frac{x}{\varepsilon} \right) + C\left(M_0t+|x| +1 \right) \omega_0 \left(\frac{\varepsilon}{ M_0t +|x|}\right) +\varepsilon \tilde{C}.
\end{aligned}
\end{equation}
Therefore, 
\begin{equation}
    \begin{aligned}
    u\left(x,t\right) &= \inf \left\{ \overline{m}^\ast \left(t, y, x \right) +g(y): |x -y | \leq M{_0} t, y \in \mathbb{R}^n \right\}\\
    &\leq \inf \left\{ \varepsilon m^\ast \left(\frac{t}{\varepsilon}, \frac{y}{\varepsilon}, \frac{x}{\varepsilon} \right) +g(y)  + C \varepsilon: |x -y | \leq M{_0} t, y \in \mathbb{R}^n \right\}\\
    &\leq \inf \left\{ \varepsilon m^\ast \left(\frac{t}{\varepsilon}, \frac{y}{\varepsilon}, \frac{x}{\varepsilon} \right) +g(y) + C \varepsilon: |x -y | \leq M{_0} t, y \in \overline{W}_\varepsilon\right\}\\
    & \leq \varepsilon m^\ast \left(\frac{t}{\varepsilon}, \frac{\tilde{y}}{\varepsilon}, \frac{x}{\varepsilon} \right) +g(\tilde{y}) + C \varepsilon\\
    &\leq  \varepsilon \mdef \left(\frac{t}{\varepsilon}, \frac{\tilde{y}}{\varepsilon}, \frac{x}{\varepsilon} \right) +g(\tilde{y}) +C\left(M_0t+|x| +1 \right) \omega_0 \left(\frac{\varepsilon}{ M_0t +|x|}\right) +\varepsilon \tilde{C}\\
     &= w^\varepsilon \left(x, t\right)+C\left(M_0t+|x| +1 \right) \omega_0 \left(\frac{\varepsilon}{ M_0t +|x|}\right) +\varepsilon \tilde{C}.
    \end{aligned}
\end{equation}
On the other hand, by the optimal control formulas for $u^\varepsilon$ and $w^\varepsilon$ and Theorem \ref{thm:main1}, we have
\[
w^\varepsilon(x, t) \leq  u^\varepsilon (x, t) \leq u(x, t) + C \varepsilon,
\]
and
\[
u(x, t) - C\varepsilon \leq u^\varepsilon(x, t).
\]
Therefore, \eqref{eqn:main3} holds and the proof is complete.
\end{proof}

\subsection{The optimality of the bound in Theorem \ref{thm:main3} and some nonconvergent results}
We now show the optimality of the bound in Theorem \ref{thm:main3}.

\begin{lem}\label{lem:W-op-1}
Assume that
\[
H(y,p)= \frac{|p|^2}{2} \qquad \text{ for } (y,p)\in\Om \times \R^n.
\]
Then, 
\begin{equation*}
    \ol H(p) \leq \frac{|p|^2}{2} \qquad \text{ for all $p\in \R^n$},
\end{equation*}
and
\begin{equation*}
        \ol H(-e_1) = \frac{1}{2}.
\end{equation*}
Here, $\{e_1,e_2,\ldots,e_n\}$ is the canonical basis of $\R^n$.
\end{lem}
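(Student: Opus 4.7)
\medskip

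\noindent\textbf{Proof proposal.} The plan is to use the inf--max representation formula \eqref{formula:effective-Hamiltonian}, which states that
\[
\ol H(p) = \inf_{\varphi \in C^1(\T^n)} \max_{y\in \ol\Om} H(y,p+D\varphi(y)).
\]

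\emph{Step 1 (upper bound).} I would obtain $\ol H(p) \leq |p|^2/2$ immediately by taking the trial function $\varphi \equiv 0$ in the inf--max formula, which gives $\ol H(p) \leq \max_{y\in \ol\Om} |p|^2/2 = |p|^2/2$. In particular, $\ol H(-e_1) \leq 1/2$.

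\emph{Step 2 (matching lower bound at $p=-e_1$).} I would argue by contradiction. Suppose $\ol H(-e_1) < 1/2$. Then by the inf--max formula there exists $\varphi \in C^1(\T^n)$ with $\max_{y\in \ol\Om} \frac{1}{2}|-e_1 + D\varphi(y)|^2 < 1/2$. Since $y \mapsto |-e_1+D\varphi(y)|$ is continuous and $\Z^n$-periodic (so its sup over $\ol\Om$ is attained on the compact set $\ol{Y\setminus D}$), we may fix $\rho \in (0,1)$ with
\[
|-e_1 + D\varphi(y)| \leq \rho \qquad \text{for all } y \in \ol \Om.
\]

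\emph{Step 3 (choose an unobstructed horizontal line).} The key geometric input is the hypothesis $D \Subset (-\frac12,\frac12)^n$. Writing $\pi:\R^n\to \R^{n-1}$ for the projection onto the last $n-1$ coordinates, the image $\pi(\ol D)$ is a compact subset of $(-\frac12,\frac12)^{n-1}$. Hence I can pick $y_0' \in (-\frac12,\frac12)^{n-1} \setminus \pi(\ol D)$, and setting $y_0 = (-\frac12, y_0')$, the straight segment $\{y_0 + t e_1 : t \in [0,1]\}$ lies entirely in $\ol\Om$ (by $\Z^n$-periodicity of $\Om$, no integer translate of $\ol D$ is crossed).

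\emph{Step 4 (contradiction via a line integral).} Along this segment, $\Z^n$-periodicity of $\varphi$ gives
\[
\int_0^1 D\varphi(y_0+t e_1)\cdot e_1 \, dt = \varphi(y_0+e_1) - \varphi(y_0) = 0,
\]
so
\[
\int_0^1 \bigl(-e_1 + D\varphi(y_0+te_1)\bigr)\cdot(-e_1)\, dt = 1.
\]
Estimating the left-hand side by Cauchy--Schwarz and the pointwise bound from Step 2 yields $1 \leq \rho < 1$, a contradiction. Therefore $\ol H(-e_1) \geq 1/2$, and combined with Step 1, $\ol H(-e_1) = 1/2$.

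\emph{Anticipated obstacle.} The only subtle point is Step 3: we must ensure that a horizontal line fully avoids every translate of $\ol D$, and this uses the strict containment $D \Subset (-\frac12,\frac12)^n$ in an essential way (without it, $\pi(\ol D)$ could cover the $(n-1)$-torus and no such line would exist). Once that unobstructed line is in hand, the remainder of the argument is a direct Cauchy--Schwarz computation exploiting periodicity of $\varphi$.
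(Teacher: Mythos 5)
Your proposal is correct and uses the same core ideas as the paper: the inf--max formula, and the existence of an unobstructed straight line in the $e_1$ direction along which periodicity of $\varphi$ forces control of $\varphi_{y_1}$. The paper's proof is slightly more streamlined: it fixes the concrete line through $\tfrac{e_2}{2}$ (which lies in $\Om$ precisely because $D \Subset (-\tfrac12,\tfrac12)^n$, so that $\ol D$ avoids the face $y_2=\tfrac12$), applies the mean value theorem directly to find a single point $y^*=\tfrac{e_2}{2}+se_1$ with $\varphi_{y_1}(y^*)=0$, and concludes $\max_{\ol\Om}\tfrac{|-e_1+D\varphi|^2}{2}\geq \tfrac{|-e_1+D\varphi(y^*)|^2}{2}\geq\tfrac12$ without any contradiction argument. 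Your version replaces the pointwise MVT step with a line integral plus Cauchy--Schwarz and a contradiction, and chooses the unobstructed line via a projection argument rather than naming one explicitly; both variants are valid and both hinge on the same geometric fact, the strict containment $D\Subset(-\tfrac12,\tfrac12)^n$. Your integral formulation also makes it a little more transparent why the same argument gives $\ol H(se_i)=\tfrac{s^2}{2}$ for every coordinate direction, which the paper records as a remark.
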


\begin{proof}
    As $H(y,p)=\frac{|p|^2}{2}$ for $(y,p)\in\ol \Om \times \R^n$, by the inf-sup representation formula, we see that 
    \[
    \ol H(p)=\ol H_\Om(p)=\inf_{\varphi\in \Lip(\T^n)} \esssup_{y\in \Om}\frac{|p+D\varphi(y)|^2}{2}.
    \]
    By taking $\varphi=0$, we imply 
    \begin{equation}\label{eq:lemW20}
        \ol H(p) \leq \frac{|p|^2}{2} \qquad \text{ for all $p\in \R^n$.}
    \end{equation}
    We now show that
    \begin{equation}\label{eq:lemW21}
        \ol H(-e_1) = \frac{1}{2}.
    \end{equation}
    Thanks to \eqref{eq:lemW20}, we only need to prove $\ol H(-e_1) \geq \frac{1}{2}$.
    Indeed, fix a test function $\varphi \in \Lip(\T^n)$.
    For $y\in \R^n$, write $y=(y_1,y') \in \R \times \R^{n-1}$.
    There exists $\delta>0$ such that
    \[
    E=[0,1]\times \left[\frac{1}{2}-\delta,\frac{1}{2}+\delta\right] \times \cdots \times \left[\frac{1}{2}-\delta,\frac{1}{2}+\delta\right] =[0,1] \times E' \subset \Omega.
    \]
    As $\varphi$ is $\Z^n$-periodic, $\varphi(0,y')=\varphi(1,y')$ for $y'\in E'$.
    We have
    \begin{align*}
            \int_E \frac{|-e_1+D\varphi(y)|^2}{2}\,dy &\geq \int_E \frac{|1-\varphi_{y_1}(y)|^2}{2}\,dy\\
            &\geq \frac{1}{2}\int_{E'} \left(\int_0^1 \left(1-\varphi_{y_1}\right)\,dy_1  \right)^2\,dy'
            =\frac{1}{2}|E'| =    \frac{1}{2}|E|. 
\end{align*}
Hence, $\esssup_{y\in \Om} \frac{|-e_1+D\varphi(y)|^2}{2} \geq \frac{1}{2}$.
The proof is complete.
\end{proof}

\begin{rem}
    Assume the settings in Lemma \ref{lem:W-op-1}.
    We proved that $\ol H(p)\leq \frac{|p|^2}{2}$ for all $p\in \R^n$.
    Following the same argument as that for \eqref{eq:lemW21}, we have further that
    \[
    \ol H(se_i) = \frac{s^2}{2} \qquad \text{ for } 1\leq i \leq n,\, s\in \R.
    \]
    However, we do not yet have the explicit formula for $\ol H(p)$ for general values of $p$. 
    This is a complicated task as we need to consider the geometry of $\Om$.
    See \cite{Con} for some results along this line.
\end{rem}

\begin{lem}\label{lem:W-op-2}
    Assume $(-\frac{9}{20},\frac{9}{20})^n \Subset D\Subset (-\frac{1}{2},\frac{1}{2})^n$ and
    \[
    H(y,p)=\frac{a(y)|p|^2}{2}\qquad \text{ for } (y,p)\in  \R^n\times \R^n.
    \]
    Here, $a \in C(\R^n)$ is a given $\Z^n$-periodic function such that $a=1$ on $\ol \Om$, $a>1$ in $\R^n\setminus \ol \Om$, and
    \begin{equation}\label{eq:a-ass}
    \int_{-\frac{1}{2}}^{\frac{1}{2}} \frac{1}{a(\frac{e_2}{4}+se_1)}\,ds < \frac{1}{4}.
    \end{equation}
    Assume further that 
    \[
    I=\{m^2 e_1\,:\, m\in \N \}.
    \]
    Then, the modulus of continuity is $\om_0(\ep)=\ep^{\frac{1}{2}}$.
    
    Let $g(x)=-x_1$ for $x\in \R^n$.
    For $\ep>0$, let $w^\ep$ be the unique viscosity solution to \eqref{eqn:w_epsilon}.
    Let $u$ be the unique viscosity solution to \eqref{eqn:PDE_limit}.
    Then, there exists $\delta=\delta(a)>0$ such that, for $\ep \in (0,\frac{1}{4})$,
    \begin{equation}\label{eq:proof-op-w}
    w^\ep\left(\frac{\ep e_2}{2},1\right) \leq u\left(\frac{\ep e_2}{2},1\right) -\delta \ep^{\frac{1}{2}}+\ep =-\frac{1}{2} -\delta \ep^{\frac{1}{2}} +\ep.
    \end{equation}
\end{lem}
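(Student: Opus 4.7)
The plan is to upper-bound $w^\ep(\tfrac{\ep e_2}{2},1)$ by constructing an explicit admissible competitor for its optimal control formula that uses the missing holes at $\{m^2e_1\}_{m\in\mathbb N}$ to beat the value $u(\tfrac{\ep e_2}{2},1)=-\tfrac12$ by a margin of order $\sqrt\ep$. First I would verify that $u(\tfrac{\ep e_2}{2},1)=-\tfrac12$: since $g(x)=-x_1=(-e_1)\cdot x$ is linear, Hopf--Lax together with Lemma \ref{lem:W-op-1} (which gives $\ol H(-e_1)=\tfrac12$) yields $u(x,t)=-x_1-\tfrac{t}{2}$. Also, $|I_k|=\#\{m\in\mathbb N:m^2\le k\}=\lfloor\sqrt k\rfloor$, so $\om_0(s)=\sqrt s$.

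The key device is to rewrite the Lagrangian action as a weighted Riemannian length. Since $L(y,v)=|v|^2/(2a(y))$ arises from the conformal metric $ds=|dy|/\sqrt{a(y)}$, let $d_a$ denote the associated geodesic distance on $\ol W$. Cauchy--Schwarz gives, for any $\gamma\in\mathrm{AC}([0,1/\ep];\ol W)$,
\[
\int_0^{1/\ep}\frac{|\dot\gamma|^2}{2a(\gamma)}\,dt\;\ge\;\tfrac{\ep}{2}\,d_a(\gamma(0),\gamma(1/\ep))^2,
\]
with equality after reparametrizing by constant $a$-speed. Plugging this into the optimal control formula for $w^\ep$ and minimizing over the starting point,
\[
w^\ep\bigl(\tfrac{\ep e_2}{2},1\bigr)\;\le\;\inf_{y_0\in\ol W}\Bigl\{\tfrac{\ep^2}{2}\,d_a\bigl(y_0,\tfrac{e_2}{2}\bigr)^2-\ep\,(y_0)_1\Bigr\},
\]
so it suffices to construct a short $a$-geodesic from a well-chosen $y_0$ to $\tfrac{e_2}{2}$.

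I would take $y_0=(L+\tfrac12)\,e_1+\tfrac{e_2}{2}$ with $L$ close to $1/\ep$, and connect $y_0$ to $\tfrac{e_2}{2}$ by a path that runs horizontally along $\{y_2=\tfrac12\}$ (safely in $\ol\Om$ where $a\equiv 1$, since $D\Subset(-\tfrac12,\tfrac12)^n$) except that at each missing cell centered at $m^2e_1$ for $m=1,\dots,\lfloor\sqrt L\rfloor$, it detours
\[
(m^2+\tfrac12,\tfrac12)\to(m^2+\tfrac12,\tfrac14)\to(m^2-\tfrac12,\tfrac14)\to(m^2-\tfrac12,\tfrac12).
\]
The two vertical pieces sit at $y_1=m^2\pm\tfrac12$, which lies in $\ol\Om$ (outside every hole by strict interior containment), while the horizontal piece at $y_2=\tfrac14$ is in $\ol W$ precisely because the hole at $m^2\in I$ is filled in. Each detour replaces a straight segment of $a$-length $1$ by one of $a$-length $\tfrac14+\tfrac14+\int_{-1/2}^{1/2}du/\sqrt{a(ue_1+\tfrac{e_2}{4})}$; by Cauchy--Schwarz and the strict inequality \eqref{eq:a-ass},
\[
\int_{-1/2}^{1/2}\frac{du}{\sqrt{a(ue_1+\tfrac{e_2}{4})}}\;\le\;\sqrt{\int_{-1/2}^{1/2}\frac{du}{a(ue_1+\tfrac{e_2}{4})}}\;<\;\tfrac12.
\]
Setting $\delta_0:=\tfrac12-\sqrt{\int_{-1/2}^{1/2}1/a(ue_1+\tfrac{e_2}{4})\,du}>0$, each detour yields a strict saving of $\delta_0$, so $d_a(y_0,\tfrac{e_2}{2})\le L+\tfrac12-\lfloor\sqrt L\rfloor\,\delta_0$.

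Plugging $L$ close to $1/\ep$ into the Riemannian bound and expanding the square gives $w^\ep(\tfrac{\ep e_2}{2},1)\le -\tfrac12-\delta_0\sqrt\ep+C\ep$, where the constant $C$ comes from the cross term $\delta_0^2\ep/2$ (bounded by $\ep/8$ since $\delta_0\le\tfrac12$) and the integer-rounding correction $|\sqrt L-\lfloor\sqrt L\rfloor|\le 1$; tracking the expansion carefully shows $C\le \tfrac58$, so $C\ep\le\ep$ for every $\ep\in(0,\tfrac14)$, and \eqref{eq:proof-op-w} follows with $\delta=\delta_0/2$ (any $\delta<\delta_0$ would work). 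The main obstacle is the strict positivity of the per-detour saving: the two vertical transitions add an unavoidable $\tfrac12$ to the $a$-length, so the horizontal crossing through the filled-in hole at $y_2=\tfrac14$ must be strictly shorter than $\tfrac12$; this is exactly what Cauchy--Schwarz delivers from the strict form of \eqref{eq:a-ass}, and equality $\int 1/a=\tfrac14$ would yield $\delta_0=0$ and break the argument.
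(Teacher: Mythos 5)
Your proof is correct, and it takes a genuinely different route from the paper's. Both arguments build essentially the same geometric trajectory: a horizontal path along $\{y_2=\tfrac12\}$ from a far right starting point to $\tfrac{e_2}{2}$, with one rectangular detour through each filled-in cell $Y_{m^2e_1}$ that dips down to $y_2=\tfrac14$ and back. The difference is how the competitor's action is controlled. The paper parametrizes the trajectory directly in time (speed $1$ on the straight and vertical pieces, speed $2$ on the horizontal crossing of the defect cell), computes the resulting action, and shows via \eqref{eq:a-ass} that the per-detour saving $\tfrac12-\int_0^1 L(\xi,\dot\xi)\,ds$ is strictly positive; multiplying by the number $\lfloor\ep^{-1/2}\rfloor$ of detours in $[0,1/\ep]$ gives the $-\delta\sqrt\ep$ correction. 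You instead observe that the Lagrangian $L(y,v)=|v|^2/(2a(y))$ has a Maupertuis/Jacobi form, so that the infimum over fixed-endpoint, fixed-time paths equals $\tfrac{1}{2T}d_a^2$ where $d_a$ is the conformal geodesic distance $ds=|dy|/\sqrt{a(y)}$ on $\ol W$; you then bound the $a$-length of the detoured trajectory (each detour replaces a segment of $a$-length $1$ by one of $a$-length at most $\tfrac14+\tfrac14+\sqrt{\int 1/a}<1$ via Cauchy--Schwarz and the strict inequality \eqref{eq:a-ass}), and finally feed the length bound into the minimized-action formula. The metric reformulation is conceptually cleaner because it separates the geometric bookkeeping (counting detours, bounding $a$-lengths) from the choice of time parametrization, and in passing it also avoids a small bookkeeping nuisance in the paper's count of complete detours within $[0,1/\ep]$. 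The price is the extra step of justifying equality in Cauchy--Schwarz by constant-$a$-speed reparametrization. Your final arithmetic — expanding $\frac{\ep^2}{2}(L+\tfrac12-\lfloor\sqrt L\rfloor\delta_0)^2-\ep(L+\tfrac12)$ with $L\approx 1/\ep$ and using $\delta_0\le\tfrac12$ and $\ep<\tfrac14$ to absorb the $O(\ep)$ cross terms — checks out and yields \eqref{eq:proof-op-w} with any $\delta<\delta_0$.
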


The inequality \eqref{eq:proof-op-w} confirms the optimality of the bound in Theorem \ref{thm:main3}.
The assumption that the hole $D$ is rather big, nearly filling the whole cell $(-\frac{1}{2},\frac{1}{2})^n$ is needed in order for \eqref{eq:a-ass} to satisfy.

\begin{proof}
    As $H(y,p)=\frac{|p|^2}{2}$ for $(y,p)\in\ol \Om \times \R^n$, by Lemma \ref{lem:W-op-1},
    \begin{equation*}
        \ol H(-e_1) = \frac{1}{2}.
    \end{equation*}
    We then use the assumption $g(x)=-x_1$ for $x\in \R^n$ to yield
    \[
    u(x,t) = -x_1 - t \ol H(-e_1) = -x_1 - \frac{t}{2}\qquad \text{ for } (x,t)\in \R^n \times [0,\infty).
    \]
    In particular, $u\left(\frac{\ep e_2}{2},1\right)=-\frac{1}{2}$.    
\smallskip

    By the hypothesis, we have
    \[
    L(y,v)=\frac{|v|^2}{2a(y)}\qquad \text{ for } (y,p)\in  \R^n\times \R^n.
    \]
    Denote by $\xi:[0,1] \to \R^n$ by
    \[
    \xi(s)=
    \begin{cases}
        \frac{-e_1+e_2}{2} - s e_2 \qquad &\text{ for } 0\leq s \leq \frac{1}{4},\\
        \frac{-e_1}{2}+\frac{e_2}{4}+2(s-\frac{1}{4})e_1\qquad &\text{ for } \frac{1}{4}\leq s \leq \frac{3}{4},\\
        \frac{e_1}{2}+\frac{e_2}{4}+(s-\frac{3}{4})e_2 \qquad &\text{ for } \frac{3}{4}\leq s \leq 1.
    \end{cases}
    \]
    In light of \eqref{eq:a-ass}, we have that
    \[
    \int_0^1 L(\xi(s),\dot \xi(s))\,ds=\int_0^1 \frac{|\dot \xi(s)|^2}{2a(\xi(s))}\,ds <\frac{1}{2}.
    \]
    Construct $\gam:[0,\infty) \to \ol W$ such that
    \[
    \gam(s)=
    \begin{cases}
        \frac{e_2}{2}+se_1 \qquad &\text{ for }s \notin [m^2-\frac{1}{2},m^2+\frac{1}{2}],\\
        m^2 e_1 + \xi(s-m^2-\frac{1}{2})\qquad &\text{ for }s \in [m^2-\frac{1}{2},m^2+\frac{1}{2}] \text{ with } m\in \N.
    \end{cases}
    \]
    Here, $\gam$ travels on the straight ray $\{\frac{e_2}{2}+se_1:s\geq 0\}$ when it sees the normal cell with a hole. And when it sees a cell with a missing hole, it detours to reduce the running cost.
    Each detour is a shift of $\xi$ by an integer vector.
    
    By the optimal control formula for $w^\ep$,
    \begin{align*}
        w^\ep\left(\frac{\ep e_2}{2},1\right) &\leq g\left(\ep\gam\left(\frac{1}{\ep}\right)\right) + \ep \int_0^{\frac{1}{\ep}} L(\gam(s),-\dot \gam(s))\,ds\\
        &\leq -1+\ep + \ep \int_0^{\frac{1}{\ep}}\frac{1}{2}\,ds + \ep \sum_{m=1}^{\lfloor \ep^{-\frac{1}{2}} \rfloor} \int_{m^2-\frac{1}{2}}^{m^2+\frac{1}{2}} \left( L(\gam(s),-\dot \gam(s))-\frac{1}{2}\right)\,ds\\
        &\leq-\frac{1}{2}+\ep-\frac{1}{2}\left(\ep^{\frac{1}{2}} -\ep \right) \int_{0}^{1} \left( 1-\frac{|\dot \xi(s)|^2}{a(\xi(s))}\right)\,ds.
    \end{align*}
    The proof is complete by setting
    \[
    \delta= \frac{1}{4} \int_{-\frac{1}{2}}^{\frac{1}{2}} \left( 1-\frac{|\dot \xi(s)|^2}{a(\xi(s))}\right)\,ds>0.
    \]
\end{proof}

Next, we demonstrate if the size of $I$ is bigger than that in Theorem \ref{thm:main3}, then $w^\ep$ does not converge to $u$ as $\ep \to 0$.

\begin{lem}\label{lem:W1}
    Assume 
   \[
    H(y,p)=\frac{a(y)|p|^2}{2}\qquad \text{ for } (y,p)\in  \R^n\times \R^n.
    \]
    Here, $a \in C(\R^n)$ is a given $\Z^n$-periodic function such that $a=1$ on $\ol \Om$, $a>1$ in $\R^n\setminus \ol \Om$.
    Assume further that 
    \[
    I=\{m e_1\,:\, m\in \N \cup \{0\}\}.
    \]
    
    Let $g(x)=-x_1$ for $x\in \R^n$.
    For $\ep>0$, let $w^\ep$ be the unique viscosity solution to \eqref{eqn:w_epsilon}.
    Let $u$ be the unique viscosity solution to \eqref{eqn:PDE_limit}.
    Then,
    \[
    \limsup_{\ep \to 0} w^\ep(0,1) < u(0,1)=-\frac{1}{2}.
    \]
\end{lem}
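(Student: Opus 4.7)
The plan is to exhibit an explicit admissible path for $w^\ep(0,1)$ that travels along the positive $e_1$-axis, exploiting the fact that $I$ reinstates every hole along that ray and that the Lagrangian $L(y,v)=|v|^2/(2a(y))$ is strictly cheaper inside the holes than in $\bar\Omega$.

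First I would compute $u(0,1)$. Since $a\equiv 1$ on $\bar\Omega$, the Hamiltonian restricted to $\bar\Omega\times \R^n$ equals $|p|^2/2$, so Lemma \ref{lem:W-op-1} gives $\bar H(-e_1)=\tfrac12$. With $g(x)=-x_1$, the Hopf--Lax formula yields $u(x,t)=-x_1-\tfrac{t}{2}$, hence $u(0,1)=-\tfrac12$.

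Next I would construct the test path $\gamma:[0,1/\ep]\to\R^n$ by $\gamma(s)=(1/\ep-s)e_1$. Its trace is the segment $\{re_1:0\le r\le 1/\ep\}$. I need to verify $\gamma([0,1/\ep])\subset \bar W$. A point $re_1$ with $r\ge0$ is either in $\bar\Omega$, or lies in some hole $m+\bar D$ with $m\in\Z^n$; since $\bar D\Subset(-\tfrac12,\tfrac12)^n$, the only such $m$ are of the form $ke_1$ with $k\in\N\cup\{0\}$, and all of these are in $I$, so the segment sits inside $\bar W$. Thus $\gamma$ is admissible for $w^\ep(0,1)$, with $\gamma(0)=e_1/\ep$ and $g(\ep\gamma(0))=-1$. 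Plugging into the optimal control formula and substituting $r=1/\ep-s$,
\begin{equation*}
w^\ep(0,1)\ \le\ -1+\ep\int_0^{1/\ep}\frac{dr}{2a(re_1)}.
\end{equation*}

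Then I would pass to the limit. By $\Z^n$-periodicity of $a$, setting $n_\ep=\lfloor 1/\ep\rfloor$,
\begin{equation*}
\ep\int_0^{1/\ep}\frac{dr}{2a(re_1)}\ =\ \ep\, n_\ep\int_0^1\frac{dr}{2a(re_1)}+O(\ep)\ \xrightarrow[\ep\to 0]{}\ \int_0^1\frac{dr}{2a(re_1)}.
\end{equation*}
The final step is to note that the integral on the right is strictly less than $\tfrac12$: since $D$ is open and contains $0$, there is $\delta>0$ with $re_1\in D$ for $r\in[0,\delta)$, on which $a(re_1)>1$; combined with $a\ge 1$ everywhere on the line, we obtain $\int_0^1\tfrac{dr}{a(re_1)}<1$. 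Therefore
\begin{equation*}
\limsup_{\ep\to 0} w^\ep(0,1)\ \le\ -1+\int_0^1\frac{dr}{2a(re_1)}\ <\ -\tfrac12\ =\ u(0,1),
\end{equation*}
which is the desired conclusion. There is no real obstacle; the only point requiring some care is the verification that the straight segment along $e_1$ lies entirely in $\bar W$, which follows from the restriction $\bar D\Subset(-\tfrac12,\tfrac12)^n$ together with the choice of $I$.
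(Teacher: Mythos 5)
Your proposal is correct and follows essentially the same approach as the paper: both use the straight ray along the positive $e_1$-axis as the test path for $w^\ep(0,1)$, observe that this ray lies in $\overline W$ precisely because every hole it meets is indexed by some $ke_1\in I$, and conclude from $a>1$ on the holes that the running cost is strictly below $1/2$. The only cosmetic difference is that the paper obtains a uniform bound $w^\ep(0,1)\le -\tfrac12-\theta$ for all $\ep\in(0,\tfrac12)$ by comparing with the constant-$\tfrac12$ integrand on unit intervals, whereas you pass to the limit $\ep\to 0$ directly; both routes give the same conclusion.
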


\begin{proof}
    As above, we have $\ol H(-e_1) = \frac{1}{2}$ and
    \[
    u(x,t) = -x_1 - t \ol H(-e_1) = -x_1 - \frac{t}{2}\qquad \text{ for } (x,t)\in \R^n \times [0,\infty).
    \]
    In particular, $u(0,1)=-\frac{1}{2}$.   
    
    By the assumptions, we have
 \[
    L(y,v)=\frac{|v|^2}{2a(y)}\qquad \text{ for } (y,p)\in  \R^n\times \R^n.
\]
     By the optimal control formula for $w^\ep$,
    \begin{align*}
        w^\ep(0,1) &\leq g(e_1) + \ep \int_0^{\frac{1}{\ep}} L(se_1,-e_1)\,ds\\
        &\leq -1 + \ep \int_0^{\frac{1}{\ep}}\frac{1}{2}\,ds + \ep \sum_{m=1}^{\lfloor \ep^{-1} \rfloor} \int_{m-\frac{1}{2}}^{m+\frac{1}{2}} \left( \frac{1}{2 a(s e_1)}-\frac{1}{2}\right)\,ds\\
        &\leq-\frac{1}{2}-\frac{1}{2}\left(1 -\ep \right) \int_{-\frac{1}{2}}^{\frac{1}{2}} \left( 1-\frac{1}{a(s e_1)}\right)\,ds\\
        &\leq -\frac{1}{2} -\theta,
    \end{align*}
    for $\ep \in (0,\frac{1}{2})$, where
    \[
    \theta= \frac{1}{4} \int_{-\frac{1}{2}}^{\frac{1}{2}} \left( 1-\frac{1}{a(s e_1)}\right)\,ds>0.
    \]
Here, $\theta>0$ as, for $s\in [-\frac{1}{2},\frac{1}{2}]$, $a(se_1)>1$ for $se_1\in D$ and $a(se_1)=1$ for $se_1 \notin D$.
\end{proof}

    A key point used in the above proof is the ray $\gam(s)=s e_1$ for $s\in [0,\infty)$ stays on $\ol W$ and is admissible for the optimal control formula of $w^\ep$.
    This ray does not stay on $\ol \Om$ and is not admissible for the optimal control formula of $u^\ep$.
\begin{rem}
    Assume the settings in Lemma \ref{lem:W1}.
    We have shown in Lemma \ref{lem:W1} that $w^\ep$ does not converge to $u$ in general because of the impact of the missing holes.
    We have not shown that $w^\ep$ is convergent as $\ep \to 0$ and have not quantified the convergence rate in this scenario.
\end{rem}

Finally, we show that (A5) is needed to obtain the convergence result in Theorem \ref{thm:main3}.

\begin{lem}\label{lem:no-A5}
    Assume
    \[
    H(y,p)=\frac{|p|^2}{2} + V(y) \qquad \text{ for } (y,p)\in \R^n\times \R^n.
    \]
    Here, $V\in C(\R^n)$ is a given potential energy which is $\Z^n$-periodic satisfying $V=0$ on $\ol \Om$ and $V>0$ in $\R^n \setminus \ol \Om$. 
    Assume that $I=\{0\}$, that is, only one hole at the origin is missing.

    Let $g=0$. 
    For $\ep>0$, let $w^\ep$ be the unique viscosity solution to \eqref{eqn:w_epsilon}.
    Let $u$ be the unique viscosity solution to \eqref{eqn:PDE_limit}.
    Then,
    \[
    \limsup_{\ep \to 0} w^\ep(0,1) \leq -V(0) < u(0,1)=0.
    \]
\end{lem}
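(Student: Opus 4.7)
The plan is very short. The key observation is that, because the hole at the origin is missing (i.e.\ $0\in I$), the point $0$ belongs to $\overline W$, and so the constant curve at the origin is an admissible trajectory in the optimal control formula for $w^\ep$. Since the Lagrangian takes the value $-V(0)<0$ there, this single admissible path already beats the limit $u(0,1)=0$ by a fixed positive amount.

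More precisely, I would first establish that $\overline H(0)=0$, hence $u\equiv 0$. Using the inf--max formula \eqref{formula:effective-Hamiltonian} with $\varphi=0$ and the hypothesis $V\equiv 0$ on $\overline\Omega$ gives $\overline H(0)\le 0$; conversely, $H(y,p)\ge V(y)\ge 0$ on $\overline\Omega$ forces $\overline H(0)\ge 0$. Since $g=0$, the Hopf--Lax formula yields $u(x,t)=0$ for all $(x,t)$.

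Next, a direct Legendre transform gives
\[
L(y,v)=\frac{|v|^2}{2}-V(y),\qquad (y,v)\in\R^n\times\R^n,
\]
so in particular $L(0,0)=-V(0)$. Since $0\in I$, the single-point hole $D$ at the origin is absent from $W$, so $0\in\overline W$, and hence the constant curve $\gamma\equiv 0$ belongs to $\mathrm{AC}([0,1/\ep];\overline W)$ with $\gamma(1/\ep)=0$. Plugging this curve into the optimal control formula for $w^\ep$ gives
\[
w^\ep(0,1)\;\le\;\ep\int_0^{1/\ep}L(0,0)\,ds+g(0)\;=\;-V(0),
\]
for every $\ep>0$. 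Letting $\ep\to 0$ and using $V(0)>0$ yields
\[
\limsup_{\ep\to 0}w^\ep(0,1)\le -V(0)<0=u(0,1),
\]
as claimed. There is no real obstacle: the whole point is that (A5) is exactly what would prevent sitting at a potential-energy well from being profitable, and its failure here is exposed by the single admissible constant path that the missing hole permits.
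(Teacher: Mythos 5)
Your proof is correct and follows essentially the same route as the paper's: compute $\overline H(0)=0$ so that $u\equiv 0$, note $L(0,0)=-V(0)$, and plug the constant curve at the missing hole into the optimal control formula for $w^\ep$. The only cosmetic difference is that you derive $\overline H(0)=0$ directly from the inf--max formula and the sign of $H$, whereas the paper cites Lemma \ref{lem:W-op-1}; both are fine.
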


\begin{proof}
     As $H(y,p)=\frac{|p|^2}{2}$ for $(y,p)\in\ol \Om \times \R^n$, by Lemma \ref{lem:W-op-1}, $\ol H(0)=0$.
    We then use the assumption $g=0$ to yield $u=0$.
    In particular, $u(0,1)=0$.

    In our classical mechanic setting,
        \[
    L(y,v)=\frac{|v|^2}{2} - V(y) \qquad \text{ for } (y,v)\in \R^n\times \R^n,
    \]
    and $V(0)>0$ by the hypothesis.
    Let $\xi:[0,\infty) \to \ol W$ be such that $\xi(s)=0$ for all $s\geq 0$.
    By the optimal control formula for $w^\ep$, we imply
    \[
    w^\ep(0,1) \leq g(0) + \ep \int_0^{\frac{1}{\ep}} L(\xi(s),\dot \xi(s))\,ds = -V(0)<0,
    \]
    which concludes the proof.
\end{proof}

\begin{rem}
The idea of the above proof is simple.
The missing hole $D$ is an attractor of minimizing curves as it is cheaper to stay inside $D$.
Specifically, the Aubry set in this case is
\[
\cA = \{ y\in D\,:\, V(y)=\max V \}.
\]
It is clear that, for $y_0\in \cA$,
\[
L(y_0,0) = - V(y_0) = \min L.
\]
By using the same idea as the proof above, we have further that, for $t>0$,
\[
\lim_{\ep \to 0} w^\ep(0,t) = - (\max V) t.
\]
\end{rem}

\appendix

\section{Proof of some auxiliary results} \label{appendix}

We first show that the metric on $\partial \Omega$ is comparable with the Euclidean metric. Recall that the unit-scale perforated domain $\Omega$ is assumed to be connected, and $\partial \Omega$ is $C^1$. The set $\Omext := \R^n\setminus \ol\Omega$ may have disconnected components. Similarly, $\partial \Omega$ may have disconnected components, and the boundary of each connected component of $\Omext$ is contained in $\partial \Omega$. For a connected component $D$ of $\Omext$, we say $D$ is \emph{extended} if $D$ is an unbounded set and call $D$ \emph{localized} if otherwise. Since we assume that $\Omega$ is connected, for $n=2$ each $D$ must be localized, and we can find a finite number of $D_i \subset 2Y$, $1\le i\le N_D$, so that 
\[
\mathcal{O} = \bigcup_{i=1}^{N_D}\bigcup_{m\in \Z^n} \left(m+D_i\right)
\]
and the union over $i$ is a disjoint one. For $n\ge 3$, $\Omext$ can have both extended and localized components, but the same structure above for $\Omext$ still holds, except now the sets $D_i$, $1\le i\le N_D$, can be divided into localized and extended ones. Note that for $i\ne j$, $\ol D_i \cap \ol D_j = \emptyset$, and $\partial D_i$ is a connected component of $\partial \Omega$. 
\begin{lem}\label{lem:pOmetric} 
Suppose $p$ and $q$ are two points in $\partial \Omega$ such that the line segment $[p,q] := \{p+t(q-p) : t\in [0,1]\}$ has intersections with $\ol\Omega$ only at $p$ and $q$. Then $p$ and $q$ belong to a connected component $M$ of $\partial \Omega$, and there exists a curve $\gamma: [0,1] \to M$ so that $\gamma(0)=p$, $\gamma(1)=q$, and
\begin{equation}
\label{eq:dM}
    \int_0^1 |\dot\gamma(t)|\,dt \le C_b |p-q|
\end{equation}
for some constant $C_b$ that depends only on $n$ and $\partial \Omega$.
\end{lem}

The above (after some generalization) shows
\begin{equation*}
    d_{\partial \Omega}(x,y) \le C_b |x-y|, \qquad \text{ if $x,y\in \partial \Omega$ and $x\sim y$}.
\end{equation*}
Here $x\sim y$ means they belong to a connected component of $\partial \Omega$, and the distance $d_{\partial \Omega}(x,y)$ is defined to be the minimal length of curves in $\partial \Omega$ joining $x$ and $y$. Note also that this estimate still holds with the same $C_b$ even after the $\ep$-rescaling. The result above should be compared with  Lemma 2.6 of \cite{HorieIshii1998} which says there is a constant $C>0$ so that $d_{\ol\Omega}(x,y) \le C|x-y|$.

\begin{proof}
We denote by $(p,q)$ the set $\{p+t(q-p) : t\in (0,1)\}$. Because $p,q\in \partial\Omega$ and $(p,q)\in \ol \Omega^c$, the segment $(p,q)$ must be in a connected component $D$ of $\Omext$. Let $M$ be the boundary of this component. Then $M$ must be connected. 

Without loss of generality, assume $D$ has a nonempty intersection with $Y$. Let $\ell$ be a fixed integer greater than $4\sqrt{n}$, say $\ell =3n$, then $M \cap \ell Y$ is compact and hence there exists a finite open cover $\{B_r (x_i) \subset \mathbb{R}^n: i = 1, 2, \cdots, K\}$ consisting of open balls of some radius $r>0$ centered at $\{x_i\}$, and each $x_i$ is in $M\cap \ell Y$, such that for each $i \in \{1, 2, \cdots, K\}$, $\Gamma_i := M \cap \ell Y \cap \ol B_{2r}\left(x_i\right)$ is the graph of a $C^1$ mapping in some coordinate system. Moreover, the projection of this graph to the domain of definition of the mapping is a compact domain of $\R^{n-1}$, and hence we can find a constant $C_1 >0$ (uniform for all $i$'s) so that any two points $x,y$ in the same graph $\Gamma_i$ can be connected by a $C^1$ path $\gamma_{x,y} : [0,1]\to \Gamma_i$, $\gamma(0)=x$, $\gamma(1)=y$, so that
$$
\mathrm{arc\,length}(\gamma_{x,y}) = \int_0^1 |\dot\gamma(t)|\,dt \le C_1|x-y|.
$$
Finally, for any pair of points $p, q\in M\cap \ell Y$, if $|p-q|<r$, then we can find some $x_i$ so that $p,q$ are in $\Gamma_i$. Then the above and hence \eqref{eq:dM} hold. If $|p-q|\ge r$ and $p,q$ do not belong to any same element of $\{B_{2r}(x_i)\}$, then we can find a piecewise curve $\gamma_{p,q}$ joining them so that each piece stays in an element of $\{\Gamma_i\}$. Clearly, we need at most $K$ such pieces. Hence the arc length of $\gamma_{p,q}$ can be bounded by
$$
KC_1(4r) \le C_2|p-q|,
$$
where $C_2 = 4KC_1$ is a constant depending only on $n$ and $\partial \Omega$ (and is uniform for all $p,q \in M\cap \ell Y$).

\medskip

Now for any segment $[p,q]$ that intersects with $M$ only at $p,q$, we aim to construct a curve $\gamma$ in $M$ joining them with controlled arc length. We consider two cases.
\smallskip

\emph{Case 1. $p$ and $q$ are relatively close: $|p-q|\le 2\sqrt{n}$}. By periodic translation, we may assume that $p \in Y$ and hence $[p,q] \subset \ell Y$. Then by the argument above, we find the desired path joining $p,q$, with arc length bounded by $C_2|p-q|$. 
\smallskip

\emph{Case 2. $p$ and $q$ are relatively far away: $|p-q|>2\sqrt{n}$}. Let $N$ be the integer such that $N\sqrt{n} <|p-q| \le (N+1)\sqrt{n}$; note that $N <|p-q|/\sqrt{n}$. Set
$$
x_i = p + \left(\frac{|p-q|-N\sqrt{n}}{2} + \left(i-\frac12\right)\sqrt{n}\right)\frac{q-p}{|q-p|}, \quad 1 \le i \le N.
$$ 
We observe that $\{x_i\}_{i=1}^N$ are evenly distributed points in $(p,q)$, and $|x_i-x_{i+1}|=\sqrt{n}$, for each $i \in \{1,\dots,N\}$. Also, we check
$$
\sqrt{n}/2 < |p-x_1| < \sqrt{n}, \quad  \sqrt{n}/2  < |q-x_N| < \sqrt{n}.
$$

For each $1\le i \le N$, the ball $\ol B_{\sqrt{n}/2}(x_i)$ has nonempty intersection with $M$ because the component $D$ of $\Omext$ enclosed by $M$ cannot contain a copy of the unit cube. So we can find $p_i \in M \cap \ol B_{\sqrt{n}/2}(x_i)$. Note that for different $i,j\in \{1,\dots,N\}$, the balls $\ol B_{\sqrt{n}/2}(x_i)$ and $\ol B_{\sqrt{n}/2}(x_j)$ can intersect only in $(p,q)$ which is in $M^c$; therefore, $p_i\ne p_j$. Similarly, $p_i$'s are different from $p,q$. Set $p_0=p$ and $p_{N+1}=q$, then $\{p_i\}_{i=0}^{N+1}\subset M$ satisfy
$$
|p_i-p_{i+1}| \le 2\sqrt{n}, \qquad \text{ for } 0\le i \le N.
$$
For each $0\le i\le N$, now that $p_i$ and $p_{i+1}$ are two points in $M$ that are within distance $2\sqrt{n}$, we can use the result of Case 1 and find a curve $\gamma_i$ in $M$ joining $p_i$ and $p_{i+1}$ and whose arc length is bounded by $ 2\sqrt{n}C_2$. Combine those curves we finally get a curve in $M$ joining $p$ to $q$ whose arc length is bounded by
\begin{equation*}
(N+1) 2\sqrt{n}C_2 \le 3C_2 |p-q|. 
\end{equation*}
Setting $C_b=3C_2$ we see that the proof is complete.
\end{proof}

We next give a proof of Lemma \ref{lem:velocity bound}.
\begin{proof}[Proof of Lemma \ref{lem:velocity bound}]
Define a new path $\tilde{\gamma}: [0, t] \to \overline{\Omega}_\varepsilon$ by
$\tilde{\gamma}(s): = \varepsilon\gamma\left(\frac{s}{\varepsilon}\right)$ for $s \in [0, t]$. Then we have $\tilde{\gamma}(t)=x$ and
\[
u^\varepsilon(x, t)= \int_{0}^t L\left(\frac{\tilde{\gamma}(s)}{\varepsilon}, \dot{\tilde{\gamma}}(s)\right)ds +g(\tilde{\gamma}(0)),
\]
and $\tilde{\gamma}$ is an optimal path for $u^\varepsilon(x, t)$ as
\[u^\varepsilon (x, t) = \inf \left\lbrace \int_0^t L\left( \frac{\xi(s)}{\varepsilon} ,\dot{\xi}(s)\right) \,ds+g\left(\xi(0)\right): \xi\in \mathrm{AC}([0,t];\overline{\Omega}_\varepsilon), \xi(t) = x\right\rbrace.\]
Note that $\dot{\tilde{\gamma}} (s)=\dot{\gamma}\left(\frac{s}{\varepsilon}\right)$ for $s \in [0, t]$. Therefore, it suffices to prove
\[
\left\|\dot{\tilde{\gamma}}\right\|_{L^\infty[0,t]} \leq M_0
\]
for some constant $M_0=M_0\left(n,\partial\Omega, H, \|Dg\|_{L^\infty(\mathbb{R}^n)}\right)>0$.

Suppose $\tilde{\gamma}$ is differentiable at $t_0 \in (0,t)$. For $t_1 \in (t_0, t)$, by the dynamic programming principle, we have
\[
u^\varepsilon(\tilde{\gamma}(t_1), t_1)= \int_{t_0}^{t_1} L \left( \frac{\tilde{\gamma}(s)}{\varepsilon}, \dot{\tilde{\gamma}}(s)\right) \, ds + u^\varepsilon\left(\tilde{\gamma}(t_0), t_0\right),
\]
which implies
\begin{equation}\label{eqn:udq_int}
\frac{u^\varepsilon\left(\tilde{\gamma}(t_1), t_1\right)-u^\varepsilon\left(\tilde{\gamma}(t_0), t_0\right)}{t_1-t_0} =\frac{1}{t_1-t_0} \int_{t_0}^{t_1} L \left( \frac{\tilde{\gamma}(s)}{\varepsilon}, \dot{\tilde{\gamma}}(s)\right) \, ds.
\end{equation}
On the other hand, for $t_1 \in (t_0, t)$ and sufficiently close to $t_0$, 
\begin{equation}\label{eqn:udq_estimate}
    \begin{aligned}
&\frac{u^\varepsilon\left(\tilde{\gamma}(t_1), t_1\right)-u^\varepsilon\left(\tilde{\gamma}(t_0), t_0\right)}{t_1-t_0} \\
= \ & \frac{u^\varepsilon\left(\tilde{\gamma}(t_1), t_1\right)-u^\varepsilon\left(\tilde{\gamma}(t_0), t_1\right) 
+ u^\varepsilon\left(\tilde{\gamma}(t_0), t_1\right)-u^\varepsilon\left(\tilde{\gamma}(t_0), t_0\right)}{t_1-t_0}\\
\leq \ & \frac{\left|u^\varepsilon\left(\tilde{\gamma}(t_1), t_1\right)-u^\varepsilon\left(\tilde{\gamma}(t_0), t_1\right)\right|}{\left|\tilde{\gamma}(t_1)-\tilde{\gamma}(t_0)\right|} \cdot\frac{\left|\tilde{\gamma}(t_1)-\tilde{\gamma}(t_0)\right|}{t_1-t_0}+ \frac{\left|u^\varepsilon\left(\tilde{\gamma}(t_0), t_1\right)-u^\varepsilon\left(\tilde{\gamma}(t_0), t_0\right)\right|}{t_1-t_0}\\
\leq\  & C \frac{\left|\tilde{\gamma}(t_1)-\tilde{\gamma}(t_0)\right|}{t_1-t_0}+C,
    \end{aligned}
\end{equation}
where the last inequality and the constant $C=C\left(n,\partial \Omega, H,\|Dg\|_{L^\infty(\R^n)}\right)>0$ comes from \eqref{eqn:u_prior} and Lemma \ref{lem:pOmetric}. Combining \eqref{eqn:udq_int} and \eqref{eqn:udq_estimate}, we obtain
\[
\frac{1}{t_1-t_0} \int_{t_0}^{t_1} L \left( \frac{\tilde{\gamma}(s)}{\varepsilon}, \dot{\tilde{\gamma}}(s)\right) \, ds \leq C \frac{\left|\tilde{\gamma}(t_1)-\tilde{\gamma}(t_0)\right|}{t_1-t_0}+C.
\]
Taking the limit as $t_1$ goes to $t_0$, we get
\begin{equation}\label{eqn:Lupper}
L \left( \frac{\tilde{\gamma}(t_0)}{\varepsilon}, \dot{\tilde{\gamma}}(t_0)\right) \leq C  \left|\dot{\tilde{\gamma}}(t_0) \right|+C.
\end{equation}
From \eqref{eqn:K_0L}, we also have 
\begin{equation}\label{eqn:Llower}
L \left( \frac{\tilde{\gamma}(t_0)}{\varepsilon}, \dot{\tilde{\gamma}}(t_0)\right)\geq \frac{\left|\dot{\tilde{\gamma}}(t_0)\right|^2}{2}-K_0.    
\end{equation}
From \eqref{eqn:Lupper} and \eqref{eqn:Llower}, it follows that there exists a constant $M_0>0$ dependent on $n,\partial\Omega,H,\|Dg\|_{L^\infty(\R^n)}$ such that
\[
\left|\dot{\tilde{\gamma}}(t_0)\right| \leq M_0.
\]
Since $t_0$ is an arbitrary point where $\tilde{\gamma}$ is differentiable, we conclude
\[
\left\|\dot{\tilde{\gamma}}\right\|_{L^\infty[0,t]} \leq M_0.
\]
  
\end{proof}

\begin{rem}
In fact, upon careful analysis, the constants $C$ in \eqref{eqn:udq_estimate} and $M_0$ can be made independent of both $n$ and $\partial \Omega$. However, as this lies outside our primary focus, we leave this to the reader.
\end{rem}

\begin{proof}[Proof of Lemma \ref{lem:velocity bound-md}]
    The proof of this lemma follows that of Lemma \ref{lem:velocity bound} verbatim and hence is omitted.
\end{proof}

We prove the inf-sup formula for $\ol H_\Om$.
\begin{lem}\label{lem:inf-sup}
Assume {\rm(A1)--(A3)}. 
We have, for $p\in \R^n$,
\[
 \ol H_\Om(p) = \inf_{\varphi \in \Lip(\T^n)} \esssup_{y\in \Om} H(y,p+D\varphi(y)). 
\]
\end{lem}

\begin{proof}
Fix $p\in \R^n$.
By using the vanishing discount procedure for the state constraint problem (see e.g., \cite{CDL,Mitake2008, IMT}), we can find 
$\left(\overline{H}_\Omega(p), v\right)\in\R\times\Lip(\overline{\Omega})$ satisfying 
\begin{equation}\label{eqn:cell-appendix}
    \left\{\begin{aligned}
    H\left(y, p+Dv(y)\right) &\leq \overline{H}_\Om(p) \quad \text{in }  \Omega, \\
    H\left(y, p+Dv(y)\right) &\geq \overline{H}_\Om(p) \quad \text{on }  \overline{\Omega}.
    \end{aligned}
    \right.
\end{equation}
in the sense of viscosity solutions. 
Here, $v$ is $\Z^n$-periodic.
Extend $v$ to a function $\tilde v\in \Lip(\R^n)$ which is $\Z^n$-periodic.
Then,
\[
\esssup_{y\in \Om} H(y,p+D\tilde v(y))
=\esssup_{y\in \Om} H(y,p+Dv(y)) = \ol H_\Om(p).
\]
Hence,
\[
 \ol H_\Om(p) \geq \inf_{\varphi \in \Lip(\T^n)} \esssup_{y\in \Om} H(y,p+D\varphi(y)). 
\]
We now prove the converse inequality.
Assume otherwise that there exist $\varphi\in \Lip(\T^n)$ and $\delta>0$ such that
\[
\esssup_{y\in \Om} H(y,p+D\varphi(y)) < \ol H_\Om(p)-2\delta.
\]
For $\lam>0$ sufficiently small, we see that
\begin{equation*}
    \left\{\begin{aligned}
   \lam \varphi(y)+ H\left(y, p+\varphi(y)\right) &\leq \overline{H}_\Om(p)-\delta \quad \text{in }  \Omega, \\
    \lam v(y)+ H\left(y, p+Dv(y)\right) &\geq \overline{H}_\Om(p)-\delta \quad \text{on }  \overline{\Omega}.
    \end{aligned}
    \right.
\end{equation*}
By the usual comparison principle, we yield $v\geq \varphi$.
By the same argument, $v-C\geq \varphi$ for any $C\in \R$, which leads to a contradiction. 
\end{proof}

\begin{rem}
    Although we have the inf-sup formula for $\ol H_\Om$, it is not too clear for us whether the following inf-max formula holds
    \[
    \ol H_\Omega(p) = \inf_{\varphi\in C^1(\T^n)} \max_{x\in \ol \Om} H(y,p+D\varphi(y))\ ?
    \]
    A technical difficulty here is that, for $v\in \Lip(\ol \Om)$, $\Z^n$-periodic solving \eqref{eqn:cell-appendix}, it is not easy to smooth it up using convolutions with a standard mollifier because of the appearance of $\partial \Omega$.
\end{rem}


\section*{Data availability}
Data sharing does not apply to this article as no datasets were generated or analyzed during the current study.

\section*{Conflict of interest}
There is no conflict of interest.

\section*{Acknowledgement}
HVT would like to thank Van Hai Van for her drawing of Figure \ref{figVan}.

\begin{thebibliography}{30} 

\bibitem{Alvarez1999}
O. Alvarez,  
\emph{Homogenization of {H}amilton-{J}acobi Equations in Perforated Sets}, Journal of Differential Equations 159 (1999), no. 2, 543–577.

\bibitem{AlvarezIshii}
O. Alvarez, H. Ishii,
\emph{Hamilton-Jacobi equations with partial gradient and application to homogenization},
Communications in Partial Differential Equations 1999, 26(5-6), 983-1002. 

\bibitem{Burago}
D. Burago,
\emph{Periodic metrics}, 
Adv. Soviet Math. 9 (1992), 205--210.

\bibitem{CDI}
I. Capuzzo-Dolcetta, H. Ishii,
\emph{On the rate of convergence in homogenization of Hamilton--Jacobi equations},
Indiana Univ. Math. J. {50} (2001), no. 3, 1113--1129.

\bibitem{CDL}
I. Capuzzo-Dolcetta, P.-L. Lions,
\emph{Hamilton–Jacobi Equations with State Constraints}, 
Transactions of the American Mathematical Society 318, 2 (1990), 643-683.

\bibitem{Con}
M. C. Concordel, 
\emph{Periodic homogenisation of Hamilton-Jacobi equations. II. Eikonal equations}, 
Proc. Roy. Soc. Edinburgh Sect. A 127 (1997), no. 4, 665–689.

  \bibitem{Ev1}
 L. C. Evans,
\emph{Periodic homogenisation of certain fully nonlinear partial differential equations}, 
Proc. Roy. Soc. Edinburgh Sect. A 120 (1992), no. 3-4, 245--265.

\bibitem{HanJang}
Y. Han, J. Jang, 
\emph{Rate of convergence in periodic homogenization for convex Hamilton–Jacobi equations with multiscales}, 
Nonlinearity 36 (2023), 5279.

\bibitem{HorieIshii1998}
K. Horie and H. Ishii, 
\emph{Homogenization of {H}amilton-{J}acobi Equations on Domains with Small Scale Periodic Structure},  
Indiana University Mathematics Journal 47 (1998), no. 3, 1011–1058.

\bibitem{JTY}
W. Jing, H. V. Tran, and Y. Yu, 
\emph{Effective fronts of polytope shapes}, 
Minimax Theory Appl. 5 (2020), no. 2, 347--360.

\bibitem{KTT}
Y. Kim, H. V. Tran, S. N. T.  Tu,
\emph{State-constraint static Hamilton–Jacobi equations in nested domains}, 
SIAM Journal on Mathematical Analysis 52, 5 (2020), 4161–4184.

\bibitem{LPV}  
P.-L. Lions, G. Papanicolaou and S. R. S. Varadhan,  
\emph{Homogenization of Hamilton--Jacobi equations}, unpublished work (1987). 

\bibitem{MTY}
H. Mitake, H. V. Tran, Y. Yu,
\emph{Rate of convergence in periodic homogenization of Hamilton-Jacobi equations: the convex setting},
{Arch. Ration. Mech. Anal.}, 2019, Volume 233, Issue 2, pp 901--934.

\bibitem{Mitake2008}
H. Mitake, 
\emph{Asymptotic Solutions of {H}amilton-{J}acobi Equations with State Constraints}, 
Applied Mathematics and Optimization (2008), 58: 393-410.

\bibitem{HNguyen}
H. Nguyen-Tien,
\emph{Optimal convergence rate for homogenization of convex Hamilton–Jacobi equations in the periodic spatial-temporal environment},
arXiv:2212.14782 [math.AP],
Asymptotic Analysis, in press.

\bibitem{IMT}
H. Ishii, H. Mitake, H. V. Tran,
\emph{The vanishing discount problem and viscosity Mather measures. Part 2: Boundary value problems},
Journal de Math\'ematiques Pures et Appliqu\'ees
108, 3 (2017), 261-305.

\bibitem{Soner1}
H. Soner,
\emph{Optimal Control with State-Space Constraint I.},
SIAM Journal on Control and Optimization 24, 3 (1986), 552–561.

\bibitem{Soner2}
H. Soner,
\emph{Optimal Control with State-Space Constraint II.},
SIAM Journal on Control and Optimization 24, 6 (1986), 1110–1122.

\bibitem{Tran}
H. V. Tran,
Hamilton--Jacobi equations: Theory and Applications, American Mathematical Society, Graduate Studies in Mathematics, Volume 213, 2021. 

\bibitem{TY2022}
 H. V. Tran, Y. Yu, \emph{Optimal Convergence Rate for Periodic Homogenization of Convex {H}amilton--{J}acobi Equations}, arXiv:2112.06896 [math.AP],
 Indiana University Mathematics Journal, to appear.

 \bibitem{TYWKAM}
 H. V. Tran, Y. Yu, \emph{A Course on Weak KAM Theory}, online lecture notes (2022). https://people.math.wisc.edu/$\sim$htran24/weak-KAM-Tran-Yu.pdf. 

\bibitem{Tu}
S. N. T. Tu, 
\emph{Rate of convergence for periodic homogenization of convex Hamilton-Jacobi equations in one dimension}, 
Asymptotic Analysis, vol. 121, no. 2, pp. 171--194, 2021.

\bibitem{Tu2}
S. N. T. Tu,
\emph{Vanishing discount problem and the additive eigenvalues on changing domains},
Journal of Differential Equations,
vol. 317, pp. 32-69, 2022.

\bibitem{TuZhang}
S. N. T. Tu, J. Zhang,
\emph{Generalized convergence of solutions for nonlinear Hamilton-Jacobi equations with state-constraint},
arXiv:2303.17058 [math.AP].

\end {thebibliography}
\end{document}